\newtheorem{cor}[subsection]{Corollary}
\newtheorem{lem}[subsection]{Lemma}
\newtheorem{prop}[subsection]{Proposition}
\newtheorem{conj}[subsection]{Conjecture}
\newtheorem{thm}[subsection]{Theorem}
\newtheorem{rem}[subsection]{Remark}
\theoremstyle{definition}
\theoremstyle{remark}
\newcommand{\nc}{\newcommand}
\nc{\renc}{\renewcommand} \nc{\ssec}{\subsection}
\nc{\sssec}{\subsubsection} \nc{\on}{\operatorname}
\nc\ol{\overline} \nc\ul{\underline} \nc\wt{\widetilde}
\nc\tboxtimes{\wt{\boxtimes}} \nc{\alp}{\alpha}
\nc{\ZZ}{{\mathbb Z}} \nc{\NN}{{\mathbb N}} \nc{\CC}{{\mathbb C}}
\nc{\OO}{{\mathbb O}} \renc{\SS}{{\mathbb S}} \nc{\DD}{{\mathbb
D}}
\nc{\Fq}{{\mathbb F}_q} \nc{\Fqb}{\ol{{\mathbb F}_q}}
\nc{\Ql}{\ol{{\mathbb Q}_\ell}} \nc{\id}{\text{id}} \nc\X{\mathcal
X}
\nc{\Hom}{\on{Hom}} \nc{\Lie}{\on{Lie}} \nc{\Loc}{\on{Loc}}
\nc{\Pic}{\on{Pic}} \nc{\Bun}{\on{Bun}} \nc{\IC}{\on{IC}}
\nc{\Aut}{\on{Aut}} \nc{\rk}{\on{rk}} \nc{\Sh}{\on{Sh}}
\nc{\Perv}{\on{Perv}} \nc{\pos}{{\on{pos}}} \nc{\Conv}{\on{Conv}}
\nc{\Sph}{\on{Sph}} \nc{\Sym}{\on{Sym}}
\nc{\BunBb}{\overline{\Bun}_B} \nc{\Buno}{\overset{o}{\Bun}}
\nc{\BunPb}{{\overline{\Bun}_P}}
\nc{\BunBM}{\overline{\Bun}_{B(M)}}
\nc{\BunPbw}{{\widetilde{\Bun}_P}}
\nc{\BunBP}{\widetilde{\Bun}_{B,P}} \nc{\GUb}{\overline{G/U}}
\nc{\GUPb}{\overline{G/U(P)}}
\nc{\iso}{{\stackrel{\sim}{\longrightarrow}}}
\nc{\Hhom}{\underline{\on{Hom}}} \nc\syminfty{\on{Sym}^{\infty}}
\nc\lal{\ol{\lambda}} \nc\xl{\ol{x}} \nc\thl{\ol{\theta}}
\nc\nul{\ol{\nu}} \nc\mul{\ol{\mu}} \nc\Sum\Sigma
\nc{\oX}{\overset{o}{X}{}}
\nc{\M}{{\mathcal M}} \nc{\N}{{\mathcal N}} \nc{\F}{{\mathcal F}}
\nc{\D}{{\mathcal D}} \nc{\Q}{{\mathcal Q}} \nc{\Y}{{\mathcal Y}}
\nc{\G}{{\mathcal G}} \nc{\E}{{\mathcal E}} \nc{\CalC}{{\mathcal
C}}
\nc\Dh{\widehat{\D}}
\nc{\C}{{\mathcal C}} \nc{\K}{{\mathcal K}}
\renewcommand{\H}{{\mathcal H}}
\nc{\T}{{\mathcal T}} \nc{\V}{{\mathcal V}} \renc{\P}{{\mathcal
P}} \nc{\A}{{\mathcal A}} \nc{\B}{{\mathcal B}} \nc{\U}{{\mathcal
U}}
\nc{\Gr}{\on{Gr}}
\nc{\frn}{{\check{\mathfrak u}(P)}}
\nc\f{{\mathfrak f}}
\nc{\q}{{\mathfrak q}} \nc{\p}{{\mathfrak p}} \nc{\s}{{\mathfrak
s}} \nc\w{\text{w}}
\nc\Spec{\on{Spec}} \nc\Mod{\on{Mod}}
\nc{\tw}{\widetilde{\mathfrak t}} \nc{\pw}{\widetilde{\mathfrak
p}} \nc{\qw}{\widetilde{\mathfrak q}} \nc{\jw}{\widetilde j}
\nc{\grb}{\overline{\Gr}} \nc{\I}{\mathcal I}
\nc{\lambdach}{{\check\lambda}} \nc{\Lambdach}{{\check\Lambda}{}}
\nc{\much}{{\check\mu}} \nc{\omegach}{{\check\omega}}
\nc{\nuch}{{\check\nu}} \nc{\etach}{{\check\eta}}
\nc{\alphach}{{\check\alpha}} \nc{\betach}{{\check\beta}}
\nc{\rhoch}{{\check\rho}} \nc{\ch}{{\check h}}
\nc{\Hb}{\overline{\H}}
\nc{\BA}{{\mathbb{A}}} \nc{\BC}{{\mathbb{C}}} \nc{\BQ}{{\mathbb{Q}}}
\nc{\BM}{{\mathbb{M}}} \nc{\BN}{{\mathbb{N}}}
\nc{\BP}{{\mathbb{P}}} \nc{\BR}{{\mathbb{R}}}
\nc{\BZ}{{\mathbb{Z}}} \nc{\BS}{{\mathbb{S}}}
\nc{\CA}{{\mathcal{A}}} \nc{\CB}{{\mathcal{B}}}
\nc{\CE}{{\mathcal{E}}} \nc{\CF}{{\mathcal{F}}}
\nc{\CG}{{\mathcal{G}}} \nc{\CH}{{\mathcal{H}}}
\nc{\CI}{{\mathcal{I}}} \nc{\CL}{{\mathcal{L}}}
\nc{\CM}{{\mathcal{M}}} \nc{\CN}{{\mathcal{N}}}
\nc{\CO}{{\mathcal{O}}} \nc{\CP}{{\mathcal{P}}}
\nc{\CQ}{{\mathcal{Q}}} \nc{\CR}{{\mathcal{R}}}
\nc{\CS}{{\mathcal{S}}} \nc{\CT}{{\mathcal{T}}}
\nc{\CU}{{\mathcal{U}}} \nc{\CV}{{\mathcal{V}}}  \nc{\CY}{{\mathcal Y}}
\nc{\CW}{{\mathcal{W}}} \nc{\CZ}{{\mathcal{Z}}}
\nc{\cM}{{\check{\mathcal M}}{}} \nc{\csM}{{\check{\mathcal A}}{}}
\nc{\oM}{{\overset{\circ}{\mathcal M}}{}}
\nc{\obM}{{\overset{\circ}{\mathbf M}}{}}
\nc{\oCA}{{\overset{\circ}{\mathcal A}}{}}
\nc{\obA}{{\overset{\circ}{\mathbf A}}{}}
\nc{\ooM}{{\overset{\circ}{M}}{}}
\nc{\osM}{{\overset{\circ}{\mathsf M}}{}}
\nc{\vM}{{\overset{\bullet}{\mathcal M}}{}}
\nc{\nM}{{\underset{\bullet}{\mathcal M}}{}}
\nc{\oD}{{\overset{\circ}{\mathcal D}}{}}
\nc{\obD}{{\overset{\circ}{\mathbf D}}{}}
\nc{\oA}{{\overset{\circ}{\mathbb A}}{}}
\nc{\op}{{\overset{\bullet}{\mathbf p}}{}}
\nc{\cp}{{\overset{\circ}{\mathbf p}}{}}
\nc{\oU}{{\overset{\bullet}{\mathcal U}}{}}
\nc{\oZ}{{\overset{\circ}{\mathcal Z}}{}}
\nc{\ofZ}{{\overset{\circ}{\mathfrak Z}}{}}
\nc{\ff}{{\mathfrak{f}}} \nc{\fv}{{\mathfrak{v}}}
\nc{\fa}{{\mathfrak{a}}} \nc{\fb}{{\mathfrak{b}}}
\nc{\fd}{{\mathfrak{d}}} \nc{\fe}{{\mathfrak{e}}}
\nc{\fg}{{\mathfrak{g}}} \nc{\fgl}{{\mathfrak{gl}}}
\nc{\fh}{{\mathfrak{h}}} \nc{\fri}{{\mathfrak{i}}}
\nc{\fj}{{\mathfrak{j}}} \nc{\fk}{{\mathfrak{k}}}
\nc{\fm}{{\mathfrak{m}}} \nc{\fn}{{\mathfrak{n}}}
\nc{\ft}{{\mathfrak{t}}} \nc{\fu}{{\mathfrak{u}}}
\nc{\fw}{{\mathfrak{w}}} \nc{\fz}{{\mathfrak{z}}}
\nc{\fp}{{\mathfrak{p}}} \nc{\frr}{{\mathfrak{r}}}
\nc{\fs}{{\mathfrak{s}}} \nc{\fsl}{{\mathfrak{sl}}}
\nc{\hsl}{{\widehat{\mathfrak{sl}}}}
\nc{\hgl}{{\widehat{\mathfrak{gl}}}}
\nc{\hg}{{\widehat{\mathfrak{g}}}}
\nc{\chg}{{\widehat{\mathfrak{g}}}{}^\vee}
\nc{\hn}{{\widehat{\mathfrak{n}}}}
\nc{\chn}{{\widehat{\mathfrak{n}}}{}^\vee}
\nc{\fA}{{\mathfrak{A}}} \nc{\fB}{{\mathfrak{B}}}
\nc{\fD}{{\mathfrak{D}}} \nc{\fE}{{\mathfrak{E}}}
\nc{\fF}{{\mathfrak{F}}} \nc{\fG}{{\mathfrak{G}}} \nc{\fH}{{\mathfrak{H}}}
\nc{\fI}{{\mathfrak{I}}} \nc{\fJ}{{\mathfrak{J}}}
\nc{\fK}{{\mathfrak{K}}} \nc{\fL}{{\mathfrak{L}}}
\nc{\fM}{{\mathfrak{M}}} \nc{\fN}{{\mathfrak{N}}}
\nc{\frP}{{\mathfrak{P}}} \nc{\fQ}{{\mathfrak{Q}}}
\nc{\fT}{{\mathfrak{T}}} \nc{\fU}{{\mathfrak{U}}}
\nc{\fV}{{\mathfrak{V}}} \nc{\fW}{{\mathfrak{W}}}
\nc{\fX}{{\mathfrak{X}}} \nc{\fY}{{\mathfrak{Y}}}
\nc{\fZ}{{\mathfrak{Z}}}
\nc{\ba}{{\mathbf{a}}}
\nc{\bb}{{\mathbf{b}}} \nc{\bc}{{\mathbf{c}}}
\nc{\be}{{\mathbf{e}}} \nc{\bj}{{\mathbf{j}}}
\nc{\bn}{{\mathbf{n}}} \nc{\bp}{{\mathbf{p}}}
\nc{\bq}{{\mathbf{q}}} \nc{\br}{{\mathbf{r}}} \nc{\bt}{{\mathbf{t}}}
\nc{\bfu}{{\mathbf{u}}} \nc{\bv}{{\mathbf{v}}}
\nc{\bx}{{\mathbf{x}}} \nc{\by}{{\mathbf{y}}}
\nc{\bw}{{\mathbf{w}}} \nc{\bA}{{\mathbf{A}}}
\nc{\bB}{{\mathbf{B}}} \nc{\bC}{{\mathbf{C}}}
\nc{\bD}{{\mathbf{D}}} \nc{\bF}{{\mathbf{F}}}
\nc{\bH}{{\mathbf{H}}} \nc{\bK}{{\mathbf{K}}}
\nc{\bM}{{\mathbf{M}}} \nc{\bN}{{\mathbf{N}}}
\nc{\bO}{{\mathbf{O}}} \nc{\bS}{{\mathbf{S}}} \nc{\bT}{{\mathbf{T}}}
\nc{\bV}{{\mathbf{V}}} \nc{\bW}{{\mathbf{W}}}
\nc{\bX}{{\mathbf{X}}}
\nc{\bY}{{\mathbf{Y}}} \nc{\bP}{{\mathbf{P}}}
\nc{\bZ}{{\mathbf{Z}}} \nc{\bh}{{\mathbf{h}}}
\nc{\sA}{{\mathsf{A}}} \nc{\sB}{{\mathsf{B}}}
\nc{\sC}{{\mathsf{C}}} \nc{\sD}{{\mathsf{D}}}
\nc{\sE}{{\mathsf{E}}} \nc{\sF}{{\mathsf{F}}}
\nc{\sK}{{\mathsf{K}}} \nc{\sL}{{\mathsf{L}}}
\nc{\sM}{{\mathsf{M}}} \nc{\sO}{{\mathsf{O}}}
\nc{\sQ}{{\mathsf{Q}}} \nc{\sP}{{\mathsf{P}}}
\nc{\sT}{{\mathsf{T}}} \nc{\sZ}{{\mathsf{Z}}}
\nc{\sV}{{\mathsf{V}}}
\nc{\sfp}{{\mathsf{p}}} \nc{\sr}{{\mathsf{r}}}
\nc{\st}{{\mathsf{t}}} \nc{\sfb}{{\mathsf{b}}}
\nc{\sfc}{{\mathsf{c}}} \nc{\sd}{{\mathsf{d}}}
\nc{\sz}{{\mathsf{z}}}
\nc{\BK}{{\bar{K}}}
\nc{\tA}{{\widetilde{\mathbf{A}}}}
\nc{\tB}{{\widetilde{\mathcal{B}}}}
\nc{\tg}{{\widetilde{\mathfrak{g}}}} \nc{\tG}{{\widetilde{G}}}
\nc{\TM}{{\widetilde{\mathbb{M}}}{}}
\nc{\tO}{{\widetilde{\mathsf{O}}}{}}
\nc{\tU}{{\widetilde{\mathfrak{U}}}{}} \nc{\TZ}{{\tilde{Z}}}
\nc{\tx}{{\tilde{x}}} \nc{\tbv}{{\tilde{\bv}}}
\nc{\tfP}{{\widetilde{\mathfrak{P}}}{}} \nc{\tz}{{\tilde{\zeta}}}
\nc{\tmu}{{\tilde{\mu}}}
\nc{\urho}{\underline{\rho}} \nc{\uB}{\underline{B}}
\nc{\uC}{{\underline{\mathbb{C}}}} \nc{\ui}{\underline{i}}
\nc{\uj}{\underline{j}} \nc{\ofP}{{\overline{\mathfrak{P}}}}
\nc{\oB}{{\overline{\mathcal{B}}}}
\nc{\og}{{\overline{\mathfrak{g}}}} \nc{\oI}{{\overline{I}}}
\nc{\eps}{\varepsilon} \nc{\hrho}{{\hat{\rho}}}
\nc{\blambda}{{\boldsymbol{\lambda}}}
\nc{\one}{{\mathbf{1}}} \nc{\two}{{\mathbf{t}}}
\nc{\Rep}{{\mathop{\operatorname{\rm Rep}}}}
\nc{\Tot}{{\mathop{\operatorname{\rm Tot}}}}
\nc{\Ker}{{\mathop{\operatorname{\rm Ker}}}}
\nc{\Hilb}{{\mathop{\operatorname{\rm Hilb}}}}
\nc{\End}{{\mathop{\operatorname{\rm End}}}}
\nc{\Ext}{{\mathop{\operatorname{\rm Ext}}}}
\nc{\CHom}{{\mathop{\operatorname{{\mathcal{H}}\it om}}}}
\nc{\GL}{{\mathop{\operatorname{\rm GL}}}}
\nc{\gr}{{\mathop{\operatorname{\rm gr}}}}
\nc{\Id}{{\mathop{\operatorname{\rm Id}}}}
\nc{\defi}{{\mathop{\operatorname{\rm def}}}}
\nc{\length}{{\mathop{\operatorname{\rm length}}}}
\nc{\supp}{{\mathop{\operatorname{\rm supp}}}}
\nc{\Cliff}{{\mathsf{Cliff}}}
\nc{\Fl}{{\mathsf{Fl}}} \nc{\Fib}{{\mathsf{Fib}}}
\nc{\Coh}{{\mathsf{Coh}}} \nc{\FCoh}{{\mathsf{FCoh}}}
\nc{\reg}{{\text{\rm reg}}}
\nc{\cplus}{{\mathbf{C}_+}} \nc{\cminus}{{\mathbf{C}_-}}
\nc{\cthree}{{\mathbf{C}_*}} \nc{\Qbar}{{\bar{Q}}}
\nc{\bOmega}{{\overline{\Omega}}}
\nc{\seq}[1]{\stackrel{#1}{\sim}}
\nc{\aff}{\operatorname{aff}}
\newcommand{\YO}{{\mathcal{Y}}}
\newcommand{\DO}{{\mathcal{D}}}
 \DeclareMathOperator{\diag}{diag}
 \DeclareMathOperator{\Tr}{Tr}
\DeclareMathOperator{\ad}{ad}
\begin{document}

\author{Michael Finkelberg and Leonid Rybnikov}
\title
{Quantization of Drinfeld Zastava in type $A$}

\dedicatory{To Borya Feigin on his 60th birthday}




\address{{\it Address}:\newline
M.F.: IMU, IITP, and
National Research University Higher School of Economics, \newline
Department of Mathematics,\newline
20 Myasnitskaya st,
Moscow 101000, Russia \newline
L.R.: IITP, and
National Research University Higher School of Economics, \newline
Department of Mathematics,\newline
20 Myasnitskaya st,
Moscow 101000, Russia}

\email{\newline fnklberg@gmail.com, leo.rybnikov@gmail.com}

\begin{abstract}
Drinfeld Zastava is a certain closure of the moduli space of maps from the
projective line to the Kashiwara flag scheme of the affine Lie algebra
$\hat{sl}_n$. We introduce an affine, reduced, irreducible, normal quiver
variety $Z$ which maps to the Zastava space bijectively at the level of
complex points. The natural Poisson structure on the Zastava space can be
described on $Z$ in terms of Hamiltonian reduction of a certain Poisson
subvariety of the dual space of a (nonsemisimple) Lie algebra. The quantum
Hamiltonian reduction of the corresponding quotient of its universal enveloping
algebra produces a quantization $Y$ of the coordinate ring of $Z$. The same
quantization was obtained in the finite (as opposed to the affine) case
generically in~\cite{o}. We prove that, for
generic values of quantization parameters, $Y$ is a quotient of the affine
Borel Yangian.
\end{abstract}
\maketitle

\section{Introduction}
\subsection{} The moduli space $\CP_{\ul{d}}^\circ$ of degree
$\ul{d}=(d_0,d_1,\ldots,d_{n-1})\in\BN^n$ based maps from the
projective line to the Kashiwara flag scheme of the affine Lie algebra
$\hsl_n$ admits two natural closures: an affine singular {\em Drinfeld
Zastava} space $Z^{\ul{d}}$, and a quasiprojective smooth {\em affine Laumon}
space $\CP_{\ul{d}}$ (see~\cite{fgk}). The advantage of $\CP_{\ul{d}}$ lies in
its smoothness (in fact, the natural proper morphism $\varpi:\ \CP_{\ul{d}}\to
Z^{\ul{d}}$ is a semismall resolution of singularities), while the advantage
of $Z^{\ul{d}}$ lies in the fact that it makes sense for other simple
and affine groups.

The affine Laumon space $\CP_{\ul{d}}$ is the moduli space of torsion free
parabolic sheaves on $\BP^1\times\BP^1$, and thus carries a natural Poisson
structure. This structure descends to the Drinfeld Zastava space $Z^{\ul{d}}$.
We have a natural problem to quantize this Poisson structure. The main goal of
our note is a solution of this problem. It was already solved
generically (on an open subvariety of $\CP_{\ul{d}}^\circ$) in the {\em finite},
i.e. $d_0=0$ (as opposed to the {\em affine}) case in~\cite{o}.

To this end we use a quiver construction of $\CP_{\ul{d}}$. This construction
follows from an observation by A.~Okounkov that $\CP_{\ul{d}}$ is a fixed
point set component of the cyclic group $\BZ/n\BZ$ acting on the moduli space
$\fM_{n,d}$ of torsion free sheaves on $\BP^1\times\BP^1$ framed at infinity.
The quiver in question (a {\em chainsaw} quiver) is similar to but different
from the $\tilde{A}_{n-1}$ quivers in Nakajima theory. In particular, the
corresponding quiver variety is {\em not} obtained by the Hamiltonian reduction
of a symplectic vector space. It is obtained by the Hamiltonian reduction
of a Poisson subvariety of the dual vector space of a (nonsemisimple) Lie
algebra $\fa_{\ul{d}}$ with its Lie-Kirillov-Kostant bracket.
The corresponding categorical
(as opposed to GIT) quotient $\fZ_{\ul{d}}$ is reduced, irreducible, normal,
and admits a morphism to the Zastava space $Z^{\ul{d}}$ which is bijective at
the level of $\BC$-points. We conjecture\footnote{This conjecture was
proved in~\cite{BF11}.} that this morphism is an isomorphism.

A historical comment is in order.
The quiver approach to Laumon moduli spaces goes back to
S.~A.~Str{\o}mme~\cite{s}; we have learnt of it from A.~Marian.
For a more recent construction of the monopole moduli space $\CP_{\ul{d}}^\circ$
in the finite (as opposed to the affine) case via Hamiltonian
reduction see~\cite{bp}. In fact, the authors of {\em loc. cit.} restrict
themselves to a single open coadjoint orbit in the Poisson subvariety of
the previous paragraph.

\subsection{} Now the ring of functions $\BC[\fZ_{\ul{d}}]$ admits a natural
quantization $\CY_{\ul{d}}$ as the quantum Hamiltonian reduction of a quotient
algebra of the universal enveloping algebra $U(\fa_{\ul{d}})$. The algebra
$\CY_{\ul{d}}$ admits a homomorphism from the Borel subalgebra $\YO$
of the Yangian of type $A_{n-1}$ in the case of finite Zastava space. We prove
that this homomorphism is surjective. In the affine situation, there is a 1-parametric deformation of $\fZ_{\ul{d}}$ analogous to the Calogero--Moser deformation of the Hilbert scheme. This leads to the the 1-parametric family of quantum Zastava spaces, $\CY_{\ul{d}}^\mu$. There is also an affine analog of $\YO$ depending on the complex parameter $\beta$ (we denote it $\widehat{\YO_\beta}$) in the same way as in \cite{g}. There is a homomorphism $\widehat{\YO_\beta}\to\YO_{\ul{d}}^\mu$ with $\beta=\mu+\sum\limits_{l=1}^nd_l$. We prove
that this homomorphism is surjective for $\mu\ne0$. Moreover, we write down certain elements in the kernel of this
homomorphism and conjecture that they generate the kernel (as a two-sided
ideal). These elements are similar to the generators of the kernel of the
surjective Brundan-Kleshchev homomorphism from their {\em shifted Yangian}
to a finite $W$-algebra of type $A$. In fact, it seems likely that
$\CY_{\ul{d}}$ as a filtered algebra is the limit of a sequence of finite
$W$-algebras of type $A$ equipped with the Kazhdan filtration.

Moreover, the similar quotients of the Borel Yangians for arbitrary simple
and affine Lie groups are likely to quantize the rings of functions on the
corresponding Drinfeld Zastava spaces.

\subsection{} Our motivation for quantization of Drinfeld Zastava came from
the following source. In~\cite{fr} we formulated a conjecture about
the {\em quantum connection} on equivariant quantum
cohomology of the finite Laumon spaces (it was proved recently by A.~Negut).
It identifies with the {\em Casimir}
connection, and its monodromy gives rise to an action of the pure braid group
on the equivariant cohomology of $\CP_{\ul{d}}$. According to the
Bridgeland-Bezrukavnikov-Okounkov philosophy, if we transfer this action to the
equivariant $K$-theory via Chern character, then it should come from an action
of the pure braid group on the equivariant derived category of coherent sheaves
on $\CP_{\ul{d}}$.

In the classical case of Nakajima quiver varieties, there are {\em chambers}
in the space of stability conditions for the GIT construction of quiver
varieties, and the derived coherent categories for the varieties in adjacent
chambers are related by Kawamata-type derived equivalences. These equivalences
generate the action of the pure braid group on the derived category of a single
quiver variety. Unfortunately, this approach fails in our situation (see
sections~\ref{vari}--\ref{smo}): although we do have chambers in the space
of stability conditions, the Laumon varieties in the adjacent chambers too
often become singular and just isomorphic (as opposed to birational).

Another approach was discovered by Bezrukavnikov-Mirkovi\'c in their works
on localization of $\fg$-modules in characteristic $p$. In our situation it
works as follows: if we replace the field $\BC$ of complex numbers by
an algebraic closure $\sK$ of a finite field of characteristic $p\gg0$,
then the quantized algebra $\CY_{\ul{d}}$ acquires a big center, isomorphic
to $\sK[\fZ^{(1)}_{\ul{d}}]$ (Frobenius twist of $\fZ_{\ul{d}}$). Thus
$\CY_{\ul{d}}$ may be viewed as global sections of a sheaf of noncommutative
algebras on $\fZ^{(1)}_{\ul{d}}$. A slight upgrade of our quantization
construction produces a sheaf $\CA_\chi$ of noncommutative algebras on
$\CP^{(1)}_{\ul{d}}$ for every stability condition $\chi$.
In sections~\ref{loc}--\ref{cohs} we formulate ``standard conjectures'' about
the sheaves of algebras $\CA_\chi$. We conjecture that they are all Morita
equivalent, and their global sections are isomorphic to $\CY_{\ul{d}}$.
Moreover, the functor of global sections from the category of $\CA_\chi$-modules
to the category of $\CY_{\ul{d}}$-modules is a derived equivalence for $\chi$
in certain chambers. Thus, for $\chi$ in such a chamber (e.g. $\chi=0$),
the composition of this derived equivalence with the above Morita equivalences
gives rise to an action of the pure braid group on $D^b(\CA_\chi$-mod).

Contrary to the Bezrukavnikov-Mirkovi\'c situation, in our case $\CA_\chi$
is {\em not} a sheaf of Azumaya algebras (e.g. in the simplest case $n=2,\
\ul{d}=(0,1)$, we have $\CP_{\ul{d}}\simeq\BA^2$, and $\CY_{\ul{d}}$ is the
universal enveloping algebra of the Borel subalgebra of $\fsl_2$).
However, in the formal neighbourhood of the central fiber of
$\varpi^{(1)}: \CP^{(1)}_{\ul{d}}\to\fZ^{(1)}_{\ul{d}}$,
the algebra $\CA_\chi$ possesses a
{\em splitting} module $\widehat M$. Tensoring with $\widehat M$ defines
a functor from the category of equivariant coherent sheaves on this formal
neighbourhood to the category of equivariant $\CA_\chi$-modules. We conjecture
that this functor is a full embedding, and the braid group action of the
previous paragraph preserves the essential image of this functor, thus giving
rise to the braid group action on the equivariant derived category of coherent
sheaves on the formal neighbourhood of the central fiber.

\subsection{Acknowledgments}
We are grateful to R.~Bezrukavnikov, A.~Braverman, B.~Feigin, V.~Ginzburg, A.~Molev
and V.~Vologodsky for useful discussions. During the key stage of the preparation
of this paper we have benefited from the hospitality and support of the
University of Sydney. Thanks are due to A.~Tsymbaliuk and J.~Kamnitzer
for the careful reading of the first
draft of this note and spotting several mistakes.

Both authors were partially supported by the RFBR grants 12-01-00944,
12-01-33101,
the National Research University Higher School of Economics' Academic Fund
award No.12-09-0062 and
the AG Laboratory HSE, RF government grant, ag. 11.G34.31.0023.
This study was carried out within the National Research University Higher School of Economics
Academic Fund Program in 2012-2013, research grant No. 11-01-0017.
This study comprises research findings from the ``Representation Theory
in Geometry and in Mathematical Physics" carried out within The
National Research University Higher School of Economics' Academic Fund Program
in 2012, grant No 12-05-0014. L.~R. was also partially
supported by the RFBR-CNRS grants 10-01-93111 and 11-01-93105, the RFBR grant 10-01-92104-JP-a,
and the National Research University Higher School of Economics' Academic Fund award No.10-01-0078.

\section{A quiver approach to Drinfeld and Laumon spaces}

\subsection{Parabolic sheaves}
\label{PS}
We recall the setup of Section~3 of~\cite{fgk}.
Let $\bC$ be a smooth projective
curve of genus zero. We fix a coordinate $z$ on $\bC$, and consider
the action of $\BC^*$ on $\bC$ such that $a(t)=a^{-1}\cdot t$. We have
$\bC^{\BC^*}=\{0_\bC,\infty_\bC\}$.
Let $\bX$ be another smooth
projective curve of genus zero. We fix a coordinate $y$ on $\bX$,
and consider the action of $\BC^*$ on $\bX$ such that
$c(x)=c^{-1}\cdot x$. We have $\bX^{\BC^*}=\{0_\bX,\infty_\bX\}$. Let
$\bS$ denote the product surface $\bC\times\bX$. Let $\bD_\infty$
denote the divisor $\bC\times\infty_\bX\cup\infty_\bC\times\bX$.
Let $\bD_0$ denote the divisor $\bC\times0_\bX$.

Given an $n$-tuple of nonnegative integers
$\ul{d}=(d_0,\ldots,d_{n-1})$, we say that a {\em parabolic sheaf}
$\CF_\bullet$ of degree $\ul{d}$ is an infinite flag of torsion
free coherent sheaves of rank $n$ on $\bS:\
\ldots\subset\CF_{-1}\subset\CF_0\subset\CF_1\subset\ldots$ such
that:

(a) $\CF_{k+n}=\CF_k(\bD_0)$ for any $k$;

(b) $ch_1(\CF_k)=k[\bD_0]$ for any $k$: the first Chern classes
are proportional to the fundamental class of $\bD_0$;

(c) $ch_2(\CF_k)=d_i$ for $i\equiv k\pmod{n}$;

(d) $\CF_0$ is locally free at $\bD_\infty$ and trivialized at
$\bD_\infty:\ \CF_0|_{\bD_\infty}=W\otimes\CO_{\bD_\infty}$;

(e) For $-n\leq k\leq0$ the sheaf $\CF_k$ is locally free at
$\bD_\infty$, and the quotient sheaves $\CF_k/\CF_{-n},\
\CF_0/\CF_k$ (both supported at $\bD_0=\bC\times0_\bX\subset\bS$)
are both locally free at the point $\infty_\bC\times0_\bX$;
moreover, the local sections of $\CF_k|_{\infty_\bC\times \bX}$
are those sections of $\CF_0|_{\infty_\bC\times
\bX}=W\otimes\CO_\bX$ which take value in $\langle
w_1,\ldots,w_{n+k}\rangle\subset W$ at $0_\bX\in \bX$.

\medskip

The fine moduli space
$\CP_{\ul{d}}$ of degree $\ul{d}$ parabolic sheaves exists and is
a smooth connected quasiprojective variety of dimension
$2d_0+\ldots+2d_{n-1}$.

\subsection{Parabolic sheaves as orbifold sheaves}
\label{realization}
We will now introduce a different realization of parabolic sheaves.
We first learned of this construction from A.~Okounkov,
though it is already present in the work of I.~Biswas~\cite{bi},
and goes back to M.~Narasimhan. Let
$\sigma:\bC\times \bX\rightarrow \bC\times \bX$ denote the map
$\sigma(z,y)=(z,y^n)$, and let $\Gamma=\BZ/n\BZ$. Then $\Gamma$ acts on
$\bC\times \bX$
by multiplying the coordinate on $\bX$ with the $n-$th roots of unity.
More precisely, we choose a generator $\gamma$ of $\Gamma$ which multiplies
$y$ by $\exp(\frac{2\pi i}{n})$.
We introduce a decreasing filtration $W=W^1=\langle w_1,\ldots,w_n\rangle
\supset W^2=\langle w_2,\ldots, w_n\rangle\supset\ldots\supset W^n=\langle w_n
\rangle$.

A parabolic sheaf $\CF_\bullet$ is completely determined by the flag of sheaves
$$
\CF_0(-\bD_0)\subset \CF_{-n+1}\subset...\subset \CF_0,
$$
satisfying conditions~\ref{PS}.(a--e). For $-n<k\leq0$ we consider a
subsheaf $\tilde\CF_k\subset\sigma^*\CF_k$ defined as follows. Away from
the line $\bC\times\infty_\bX$ the sheaf $\tilde\CF_k$ coincides with
$\sigma^*\CF_k$; and the local sections of $\tilde\CF_k|_{\bC\times\infty_\bX}$
are those sections of $\sigma^*\CF_k|_{\bC\times\infty_\bX}=
W\otimes\CO_{\bC\times\infty_\bX}$ which take value in $W^{k+n}$.

To $\CF_\bullet$ we can associate a single
$\Gamma$-equivariant torsion free sheaf $\tilde \CF$ on $\bC\times \bX$:
$$
\tilde \CF:=\tilde\CF_{-n+1}+
\tilde\CF_{-n+2}(\bC\times\infty_\bX-\bC\times0_\bX)+...
+\tilde\CF_0((n-1)(\bC\times\infty_\bX-\bC\times0_\bX)).
$$
Note that $\tilde\CF|_{\bC\times\infty_\bX}\equiv
W\otimes\CO_{\bC\times\infty_\bX}$, and $\tilde\CF|_{\infty_\bC\times\bX}$
is a trivial vector bundle, hence its trivialization on $\bC\times\infty_\bX$
canonically extends to a trivialization on $\bD_\infty$.

The sheaf $\tilde\CF$ will have to satisfy certain numeric and framing
conditions that
mimick conditions~\ref{PS}.(b--e). Conversely, any $\Gamma$-equivariant sheaf
$\tilde \CF$ that satisfies those numeric and framing conditions will
determine a unique parabolic sheaf. More precisely, for $d=d_0+\ldots+d_{n-1}$,
let $\fM_{n,d}$ be the Giesecker moduli space of torsion free sheaves on
$\bC\times\bX$ of rank $n$ and second Chern class $d$, trivialized on
$\bD_\infty$ (see~\cite{nak}, section~2). Then we have $\tilde\CF\in\fM_{n,d}$.
We consider the following action of $\Gamma$ on
$W:\ \gamma(w_l)=\exp(\frac{2\pi il}{n})w_l,\ l=1,\ldots,n$. The action of
$\Gamma$ on $\bC\times\bX$ together with its action on the trivialization
at $\bD_\infty$ (via the action on $W$)
gives rise to the action of $\Gamma$ on $\fM_{n,d}$. We have
$\tilde\CF\in\fM_{n,d}^\Gamma$. Thus we have constructed an embedding
$\CP_{\ul{d}}\hookrightarrow\fM_{n,d}^\Gamma,\ \CF_\bullet\mapsto\tilde\CF$.
The fixed point set $\fM_{n,d}^\Gamma$ has many connected components numbered
by decompositions $d=d_0+d_1+\ldots+d_{n-1}$, and the embedding
$\CP_{\ul{d}}\hookrightarrow\fM_{n,d}^\Gamma$ is an isomorphism onto the
connected component $\fM_{n,\ul{d}}^\Gamma$.

The inverse isomorphism takes a $\Gamma$-equivariant torsion free sheaf
$\tilde\CF$ to the flag $\CF_0(-\bD_0)\subset\CF_{-n+1}\subset\ldots\subset
\CF_0$ where for $-n<k\leq0$ we set
$\CF_k:=\sigma_*\left(\tilde\CF\otimes\CO_\bS(k\bD_0)\right)^\Gamma$.

\subsection{A quiver description of Laumon space}
\label{queer}
According to section~2 of~\cite{nak}, $\fM_{n,d}$ admits the following GIT
description. We set $V=\BC^d$, and we consider $M=\End(V)\oplus\End(V)
\oplus\Hom(W,V)\oplus\Hom(V,W)$. A typical quadruple in $M$ will be denoted
by $(A,B,p,q)$. We set $L\supset\mu^{-1}(0):=\{(A,B,p,q):\ AB-BA+pq=0\}$.
We define $\mu^{-1}(0)^s$ as the open subset of stable quadruples, i.e. those
which do not admit proper subspaces $V'\subset V$ stable under $A,B$ and
containing $p(W)$. The group $GL(V)$ acts naturally on $M$ preserving
$\mu^{-1}(0)$; its action on
$\mu^{-1}(0)^s$ is free, and $\fM_{n,d}$ is the GIT quotient
$\mu^{-1}(0)^s/GL(V)$.

In terms of this quiver realization, the action of $\Gamma$ is described
as follows: $\gamma(A,B,p,q)=(A,\exp(\frac{2\pi i}{n})B,
\exp(\frac{2\pi i}{n})p,q)$. Recall that the action of $\Gamma$ on $W$ was
desribed in~\ref{realization}: for $l=1,\ldots,n,\ W_l=\langle w_l\rangle$
is the isotypic component corresponding to the character
$\chi_l(\gamma)=\exp(\frac{2\pi il}{n})$. Hence the connected component
of the fixed point set $\CP_{\ul{d}}\simeq\fM_{n,\ul{d}}^\Gamma$ admits
the following quiver description.

We choose an action of $\Gamma$ on $V$
such that the $\chi_l$-isotypic component $V_l$ has dimension $d_l\
(l\in\BZ/n\BZ)$. Then $M^\Gamma_{\ul{d}}=\{(A_l,B_l,p_l,q_l)_{l\in\BZ/n\BZ}\}=$
$$\bigoplus_{l\in\BZ/n\BZ}\End(V_l)\oplus
\bigoplus_{l\in\BZ/n\BZ}\Hom(V_l,V_{l+1})\oplus
\bigoplus_{l\in\BZ/n\BZ}\Hom(W_{l-1},V_l)\oplus
\bigoplus_{l\in\BZ/n\BZ}\Hom(V_l,W_l):$$

$$\xymatrix{
\ldots \ar[r]^{B_{-3}}
& V_{-2} \ar@(ur,ul)[]_{A_{-2}} \ar[r]^{B_{-2}} \ar[d]_{q_{-2}}
& V_{-1} \ar@(ur,ul)[]_{A_{-1}} \ar[r]^{B_{-1}} \ar[d]_{q_{-1}}
& V_0 \ar@(ur,ul)[]_{A_0} \ar[r]^{B_0} \ar[d]_{q_0}
& V_1 \ar@(ur,ul)[]_{A_1} \ar[r]^{B_1} \ar[d]_{q_1}
& V_2 \ar@(ur,ul)[]_{A_2} \ar[r]^{B_2} \ar[d]_{q_2} &\ldots\\
\ldots \ar[ur]^{p_{-2}} & W_{-2} \ar[ur]^{p_{-1}} & W_{-1} \ar[ur]^{p_0}
& W_0 \ar[ur]^{p_1} & W_1 \ar[ur]^{p_2} & W_2 \ar[ur]^{p_3} &\ldots
}$$
(the {\em chainsaw quiver}).

Furthermore, $\mu^{-1}(0)^\Gamma_{\ul{d}}=\{(A_l,B_l,p_l,q_l)_{l\in\BZ/n\BZ}:\
A_{l+1}B_l-B_lA_l+p_{l+1}q_l=0\ \forall l\}$.
Moreover, $\mu^{-1}(0)^{s,\Gamma}_{\ul{d}}=
\{(A_l,B_l,p_l,q_l)_{l\in\BZ/n\BZ}\in
\mu^{-1}(0)^\Gamma_{\ul{d}}:$ there is no proper $\BZ/n\BZ$-graded subspace
$V'_\bullet\subset V_\bullet$ stable under $A_\bullet,B_\bullet$ and
containing $p(W_\bullet)\}$.

Finally, the group $\prod_{l\in\BZ/n\BZ}GL(V_l)$ acts naturally on
$M^\Gamma_{\ul{d}}$
preserving $\mu^{-1}(0)^\Gamma_{\ul{d}}$; its action on
$\mu^{-1}(0)^{s,\Gamma}_{\ul{d}}$ is free, and $\fM_{n,\ul{d}}=
\mu^{-1}(0)^{s,\Gamma}_{\ul{d}}/\prod_{l\in\BZ/n\BZ}GL(V_l)$.

\begin{rem}
\label{F0}
{\em If a point $\CF_\bullet\in\CP_{\ul{d}}\simeq\fM_{n,\ul{d}}$ has
a representative $(A_l,B_l,p_l,q_l)_{l\in\BZ/n\BZ}$, then $\CF_0\in\CM_{n,d_0}$
has a representative $(A',B',p',q')$ defined as follows. First of all,
$W'=W_0\oplus W_1\oplus\ldots\oplus W_{n-1},\ V'=V_0$. Now $A'=A_0,\
B'=B_{n-1}B_{n-2}\ldots B_1B_0,\ p'=\oplus_{0\leq
l\leq n-1}B_{n-1}B_{n-2}\ldots B_{l}p_l,\ q'=\oplus_{0\leq l\leq n-1}
q_lB_{l-1}\ldots B_1B_0$.}
\end{rem}

\begin{rem}
\label{neg}
{\em A.~Negut has introduced in~\cite{n} the moduli spaces
$\CM'_{\ul{d}}$ closely related to Laumon moduli spaces. Namely,
$\CM'_{\ul{d}}$ is defined as the moduli space of flags of locally free
sheaves $0\subset\CF_1\subset\ldots\subset\CF_{n-1}\subset\CF_n\subset
W\otimes\CO_\bC$ such that $\on{rk}\CF_k=k,\ k=1,\ldots,n;\
\deg\CF_k=-d_k$, and at $\infty_\bC$ our flag consists of vector subbundles,
and takes value $\langle w_1\rangle\subset\langle w_1,w_2\rangle\subset
\ldots\langle w_1,\ldots,w_{n-1}\rangle\subset W$.

Let us consider the following {\em handsaw quiver} $Q'$

$$\xymatrix{
& V_1 \ar@(ur,ul)[]_{A_1} \ar[r]^{B_1} \ar[d]_{q_1}
& V_2 \ar@(ur,ul)[]_{A_2} \ar[r]^{B_2} \ar[d]_{q_2} &\ldots
&\ldots \ar[r]^{B_{n-3}}
& V_{n-2} \ar@(ur,ul)[]_{A_{n-2}} \ar[r]^{B_{n-2}} \ar[d]_{q_{n-2}}
& V_{n-1} \ar@(ur,ul)[]_{A_{n-1}} \ar[r]^{B_{n-1}} \ar[d]_{q_{n-1}}
& V_n \ar@(ur,ul)[]_{A_n}\\
W_0 \ar[ur]^{p_1} & W_1 \ar[ur]^{p_2} & W_2 \ar[ur]^{p_3} &\ldots
&\ldots \ar[ur]^{p_{n-2}} & W_{n-2} \ar[ur]^{p_{n-1}} & W_{n-1} \ar[ur]^{p_n}
}$$
with relations $A_{k+1}B_k-B_kA_k+p_{k+1}q_k=0,\ k=1,\ldots,n-1$.
Let $\sM'_{\ul{d}}$ stand for the moduli scheme of representations of $Q'$
(quiver with relations) such that $\dim W_0=\ldots=\dim W_{n-1}=1,\
\dim V_k=d_k,\ k=1,\ldots,n$. Let ${\sM_{\ul{d}}^s}'$ stand for the open
subscheme of stable representations of $Q'$ formed by all the quadruples
$(A_\bullet,B_\bullet,p_\bullet,q_\bullet)$ such that there is no proper
graded subspace $V'_\bullet\subset V_\bullet$ stable under $A_\bullet,
B_\bullet$ and containing $p_\bullet(W_\bullet)$. Let $G_{\ul{d}}$ stand
for the group $\prod_{k=1}^n GL(V_k)$ acting on $\sM_{\ul{d}}'$ naturally.
Then the action of $G_{\ul{d}}$ on ${\sM_{\ul{d}}^s}'$ is free,
and the argument
of Sections~\ref{realization} and ~\ref{queer} proves that the
quotient ${\sM_{\ul{d}}^s}'/G_{\ul{d}}$ is isomorphic to $\CM'_{\ul{d}}$.}
\end{rem}

\subsection{A quiver approach to Drinfeld Zastava}
\label{Queer}
We define $\fZ_{\ul{d}}$ as the categorical quotient
$\mu^{-1}(0)^\Gamma_{\ul{d}}//\prod_{l\in\BZ/n\BZ}GL(V_l)$, that is the
spectrum of the ring of $\prod_{l\in\BZ/n\BZ}GL(V_l)$-invariants in
$\BC[\mu^{-1}(0)^\Gamma_{\ul{d}}]$.

Let $\chi=\chi_{-1,\ldots,-1}$ stand for the character $(g_1,\ldots,g_n)\mapsto
\det(g_1)\ldots\det(g_n):\ \prod_{l\in\BZ/n\BZ}GL(V_l)\to\BC^*$.
Let us denote $\prod_{l\in\BZ/n\BZ}GL(V_l)$ by $G_{\ul{d}}$ for short.

Let $\BC[\mu^{-1}(0)^\Gamma_{\ul{d}}]^{G_{\ul{d}},\chi^r}$ stand for the
$\chi^r$-isotypical component of $\BC[\mu^{-1}(0)^\Gamma_{\ul{d}}]$ under the
action of $G_{\ul{d}}$. Then $\fM_{n,\ul{d}}=
\mu^{-1}(0)^{s,\Gamma}_{\ul{d}}/G_{\ul{d}}=
\on{Proj}\left(\bigoplus_{r\geq0}
\BC[\mu^{-1}(0)^\Gamma_{\ul{d}}]^{G_{\ul{d}},\chi^r}\right)$.
We have a projective morphism $\pi:\ \fM_{n,\ul{d}}\to\fZ_{\ul{d}}$.

Let $Z^{\ul{d}}$ stand for the Drinfeld Zastava space defined (under the
name of $\fM^\alpha$) in section~4 of~\cite{fgk} and (for an arbitrary
almost simple simply connected group $G$ in place of $\on{SL}(n)$ here)
in~\cite{bfg}. Let $\varpi:\ \CP_{\ul{d}}\to Z^{\ul{d}}$ be the morphism
(semismall resolution of singularities) introduced in section~5 of~\cite{fgk}.
Our next goal is to prove the following

\begin{thm}
\label{bij}
a) $\fZ_{\ul{d}}$ is a reduced irreducible normal scheme.

b) The morphism $\varpi:\ \CP_{\ul{d}}\to Z^{\ul{d}}$ factors as
$\CP_{\ul{d}}\stackrel{\pi}{\to}\fZ_{\ul{d}}\stackrel{\eta}{\to}Z^{\ul{d}}$,
and $\eta$ induces a bijection between the sets of $\BC$-points.
\end{thm}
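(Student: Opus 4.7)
The plan is to prove part (a) by upstairs-downstairs analysis: establish the desired properties for $\mu^{-1}(0)^\Gamma_{\ul{d}}$ first, and then descend them to the categorical quotient $\fZ_{\ul{d}}$. Concretely, first I would verify that $\mu^{-1}(0)^\Gamma_{\ul{d}}$ is a complete intersection of the expected dimension in $M^\Gamma_{\ul{d}}$. A dimension count gives $\dim M^\Gamma_{\ul{d}}=\sum_l d_l^2+\sum_l d_ld_{l+1}+2\sum_l d_l$, and since the moment equations cut out $\sum_l d_ld_{l+1}$ coordinates, a complete intersection would have dimension $\sum_l d_l^2+2\sum_l d_l$. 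This coincides with $\dim G_{\ul{d}}+\dim\CP_{\ul{d}}$, which is realized in the smooth stable locus $\mu^{-1}(0)^{s,\Gamma}_{\ul{d}}$ (the differential of $\mu$ is surjective at stable points, where $G_{\ul{d}}$ acts freely). Hence, provided every irreducible component meets $\mu^{-1}(0)^{s,\Gamma}$, the whole $\mu^{-1}(0)^\Gamma$ is a complete intersection of the correct dimension, therefore Cohen--Macaulay, hence satisfies Serre's condition $S_2$, and is irreducible (since $\mu^{-1}(0)^{s,\Gamma}$ maps onto the connected smooth variety $\CP_{\ul{d}}$ as a principal $G_{\ul{d}}$-bundle, hence is itself smooth and connected). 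Combining this with a bound showing the unstable locus $\mu^{-1}(0)^\Gamma\setminus\mu^{-1}(0)^{s,\Gamma}$ has codimension $\geq 2$ (by a stratification argument on how $p(W_\bullet)$ fails to generate), Serre's criterion $R_1+S_2$ yields normality of $\mu^{-1}(0)^\Gamma_{\ul{d}}$. Since $G_{\ul{d}}$ is reductive, the categorical quotient $\fZ_{\ul{d}}$ inherits reducedness, irreducibility and normality.

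For part (b), the factorization is essentially formal: the affine target $Z^{\ul{d}}$ absorbs the proper morphism $\varpi$ through the Stein/affinization map. More precisely, the Proj realization of $\CP_{\ul{d}}=\on{Proj}\bigl(\bigoplus_{r\geq 0}\BC[\mu^{-1}(0)^\Gamma_{\ul{d}}]^{G_{\ul{d}},\chi^r}\bigr)$ gives the morphism $\pi:\CP_{\ul{d}}\to\Spec\bigl(\BC[\mu^{-1}(0)^\Gamma_{\ul{d}}]^{G_{\ul{d}}}\bigr)=\fZ_{\ul{d}}$, and since $Z^{\ul{d}}$ is affine the composition $\varpi$ factors uniquely as $\varpi=\eta\circ\pi$ for a well-defined $\eta:\fZ_{\ul{d}}\to Z^{\ul{d}}$. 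To show $\eta$ is a bijection on $\BC$-points, I would match the two natural stratifications. Closed $G_{\ul{d}}$-orbits in $\mu^{-1}(0)^\Gamma$ correspond to polystable representations of the chainsaw quiver, and such a polystable representation decomposes uniquely as a direct sum of a stable summand (where $p_\bullet$ generates) and a collection of simple summands supported away from $W_\bullet$, the latter being exhausted by the one-dimensional representations at node $l$ with eigenvalue $z\in\BA^1$ for $A_l$. This decomposition data (a stable block of smaller degree $\ul{d}'$ together with an effective colored divisor on $\BA^1$ of degree $\ul{d}-\ul{d}'$) is precisely the factorization data defining points of the Drinfeld Zastava $Z^{\ul{d}}$ (the factorization map to $\prod_l\on{Sym}^{d_l}\BA^1$ being given by characteristic polynomials of the $A_l$). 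Matching the two descriptions stratum by stratum, and comparing with the stratification of $Z^{\ul{d}}$ by defect divisors, yields both injectivity and surjectivity of $\eta$ on $\BC$-points.

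The main obstacle I anticipate is the codimension-$\geq 2$ bound on the unstable locus in $\mu^{-1}(0)^\Gamma$, and, in part (b), the careful identification of the combinatorial datum attached to a polystable representation with a point of $Z^{\ul{d}}$ in its factorization description. The first requires examining all possible types of $\BZ/n\BZ$-graded subspaces $V'_\bullet$ stable under $A_\bullet,B_\bullet$ and containing $p(W_\bullet)$, and estimating dimensions of the strata they cut out; the second requires reconciling the quiver decomposition with the defect-theoretic definition of $Z^{\ul{d}}$ from~\cite{fgk,bfg}, which is essentially a bookkeeping task but relies on knowing that the generic (transversal) part of $Z^{\ul{d}}$ is a product of $\CP_{\ul{d}'}^\circ$'s with symmetric products of $\BA^1$, matching exactly the polystable picture.
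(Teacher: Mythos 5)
Your part (b) follows essentially the paper's route (matching the polystable/closed-orbit stratification of $\fZ_{\ul{d}}$ with the defect stratification of $Z^{\ul{d}}$, and extracting $\eta$ from affineness of the target plus normality), so I will concentrate on part (a), where there is a genuine gap. Your plan is to prove normality of $\sM_{\ul{d}}=\mu^{-1}(0)^{\Gamma}_{\ul{d}}$ itself by Serre's criterion, getting $R_1$ from ``smooth on the stable locus'' plus ``the unstable locus has codimension $\geq 2$,'' and then descend to the invariant ring. The second input is simply false: already for $n=2$, $\ul{d}=(0,1)$ one has $\sM_{\ul{d}}=\End(V)\oplus V\oplus V^*=\BA^3$ with $V=\BC$, and the unstable locus is the divisor $\{p=0\}$, of codimension $1$. (The descent itself would be fine, and there is also the subtler point, which the paper flags in its discussion of smoothness under variation of stability, that for the chainsaw quiver the differential of $\mu$ need not be surjective at stable points, so smoothness of the scheme $\mu^{-1}(0)^{s,\Gamma}_{\ul{d}}$ is not automatic from smoothness of $\CP_{\ul{d}}$.) The paper deliberately avoids proving normality of $\sM_{\ul{d}}$: it only establishes that $\sM_{\ul{d}}$ is a reduced irreducible complete intersection (hence $S_2$), and then invokes Crawley-Boevey's normality criterion for the quotient, which requires exhibiting a normal open $U\subset\fZ_{\ul{d}}$ whose complement has codimension $2$ both in $\fZ_{\ul{d}}$ and in its preimage in $\sM_{\ul{d}}$. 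The open set $U$ is taken to lie over the configurations where at most two of the colored points collide, and its normality is checked by an \'etale-local reduction to the three explicit models ($\on{SL}(2)$: affine space; $\on{SL}(3)$: conifold times a line; $\widehat{\on{SL}}(2)$: the hypersurface $b_{1,0}b_{0,0}=s(A_1-A_0)^2$). You would need either this device or an independent proof that the singular locus of $\sM_{\ul{d}}$ has codimension $\geq2$ — which is not what your unstable-locus estimate gives.

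A second, smaller gap: your complete-intersection claim needs an upper bound on the dimension of \emph{every} irreducible component of $\sM_{\ul{d}}$, not just the observation that the stable locus has the expected dimension. This bound is exactly the content of the paper's fiber-dimension estimate for $\Upsilon:\sM_{\ul{d}}\to\BA^{\ul{d}}$ (a factorization argument reducing to divisors supported at one point, followed by a Wilson-style count over Jordan types of the nilpotent $A_l$); it is the technical heart of part (a) and also supplies irreducibility (the closure of $\Upsilon^{-1}(\BA^{\ul{d}}-\Delta)$ is the unique component), replacing your unproved hypothesis that every component meets the stable locus. Your stratification argument for part (b) is sound and is the same as the paper's; just note that the factorization $\varpi=\eta\circ\pi$ is not purely formal — it uses that $\BC[\fZ_{\ul{d}}]=\Gamma(\CP_{\ul{d}},\CO)$, which in turn rests on the normality from part (a) together with connectedness of the fibers of $\pi$.
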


The proof occupies the rest of this section.

\subsection{Examples}
\label{ex}
We consider three basic examples of Zastava spaces for the groups
$\on{SL}(2),\on{SL}(3),\widehat{\on{SL}}(2)$.

\subsubsection{$\on{SL}(2)$}
\label{sl2}
We take $n\geq2,\ d_2=d_3=\ldots=d_{n-1}=d_0=0,\ d_1=d$.
We have $V_1=V=\BC^d,\ A_1=A\in\End(V),\ B_1=0,\ p_1=p\in V,\ q_1=q\in V^*,\
G_{\ul{d}}=GL(V)$.
Thus $\mu^{-1}(0)=\End(V)\oplus V\oplus V^*$, and $\fZ_{\ul{d}}=
(\End(V)\oplus V\oplus V^*)//GL(V)$. By the classical Invariant Theory,
the ring of $GL(V)$-invariant functions on $\End(V)\oplus V\oplus V^*$ is
freely generated by the functions $a_1,\ldots,a_d,b_0,\ldots,b_{d-1}$ where
$a_m:=\on{Tr}(A^m)$, and $b_m:=q\circ A^m\circ p$.
Hence $\fZ_{\ul{d}}\simeq\BA^{2d}$.

\subsubsection{$\on{SL}(3)$}
\label{sl3}
We take $n\geq3,\ d_3=d_4=\ldots=d_{n-1}=d_0=0,\ d_1=d_2=1$.
We have $V_1=\BC=V_2$, and hence all our linear operators act between
one-dimensional vector spaces, and can be written just as numbers.
We have nonzero numbers $A_1,A_2,B_1,p_1,p_2,q_1,q_2$, and
$\mu^{-1}(0)$ is given by the single equation $B_1(A_2-A_1)+p_2q_1=0$.
The group $G_{\ul{d}}$ is just $\BC^*\times\BC^*$ with coordinates
$c_1,c_2$. It acts on $\mu^{-1}(0)$ as follows:
$(c_1,c_2)\cdot(A_1,A_2,B_1,p_1,p_2,q_1,q_2)=
(A_1,A_2,c_1c_2^{-1}B_1,c_1^{-1}p_1,c_2^{-1}p_2,c_1q_1,c_2q_2)$.
The ring of $\BC^*\times\BC^*$-invariant functions on $\mu^{-1}(0)$
is generated by the functions $b_{1,0}:=q_1p_1,\ b_{2,0}:=q_2p_2,\
r:=q_2B_1p_1,\ A_1,\ A_2$ with a single relation $b_{1,0}b_{2,0}+r(A_2-A_1)=0$.
Thus, $\fZ_{\ul{d}}$ is the product of the conifold with the affine line.

\subsubsection{$\widehat{\on{SL}}(2)$}
\label{hatsl2}
We take $n=2,\ d_0=d_1=1$. We have $V_1=\BC=V_2$, and hence all
our linear operators act between
one-dimensional vector spaces, and can be written just as numbers.
We have nonzero numbers $A_1,A_0,B_1,B_0,p_1,p_0,q_1,q_0$, and
$\mu^{-1}(0)$ is cut out by two equations
$B_1(A_0-A_1)+p_0q_1=0=B_0(A_1-A_0)+p_1q_0$.
The group $G_{\ul{d}}$ is just $\BC^*\times\BC^*$ with coordinates
$c_1,c_0$. It acts on $\mu^{-1}(0)$ as follows:
$(c_1,c_0)\cdot(A_1,A_0,B_1,B_0,p_1,p_0,q_1,q_0)=
(A_1,A_0,c_1c_0^{-1}B_1,c_0c_1^{-1}B_0,c_1^{-1}p_1,c_0^{-1}p_0,c_1q_1,c_0q_0)$.
The ring of $\BC^*\times\BC^*$-invariant functions on $\mu^{-1}(0)$
is generated by the functions $b_{1,0}:=q_1p_1,\ b_{0,0}:=q_0p_0,\
s:=B_0B_1,\ A_1,\ A_0$ with a single relation $b_{1,0}b_{0,0}-s(A_0-A_1)^2=0$.

\subsection{Stratification of $\fZ_{\ul{d}}$}
\label{strat}
Applying the famous Crawley-Boevey's trick we may identify all the
one-dimensional spaces $W_l$, and denote the resulting line by $W_\infty$.
Thus, $W_\infty$ becomes the source of all $p_l$, and the target of all $q_l,\
l\in\BZ/n\BZ$:
$$\xymatrix{
& V_0 \ar@(ur,ul)[]_{A_0} \ar@/^/[rd]^{B_0} \ar@<-.5ex>[d]_{q_0} &\\
V_{-1} \ar@(ul,dl)[]_{A_{-1}} \ar@/^/[ru]^{B_{-1}} \ar@<-.5ex>[r]_{q_{-1}}
& W_\infty \ar@<-.5ex>[l]_{p_{-1}} \ar@<-.5ex>[u]_{p_0} \ar@<-.5ex>[r]_{p_1}
& V_1 \ar@(dr,ur)[]_{A_1} \ar@/^/[ld]^{B_1} \ar@<-.5ex>[l]_{q_1}\\
& \cdots \ar@/^/[lu]^{B_{-2}} &
}$$
The $\BC$-points of $\fZ_{\ul{d}}$ classify the semisimple
representations of the resulting {\em Ferris wheel quiver} with relations
$\mu=0$, to be denoted
by $Q$. More precisely, the $\BC$-points of $\fZ_{\ul{d}}$ classify the
semisimple $Q$-modules of dimension $\ul{\dim}=
(\dim(V_l)_{l\in\BZ/n\BZ},\dim(W_\infty)):\ \dim(W_\infty)=1,\ \dim(V_l)=d_l$.

We start with the classification of simple $Q$-modules of dimension
smaller than or equal to $\ul{\dim}$.
First suppose $\dim(W_\infty)=0$. Then an irreducible module is either
$L_l(x)$ for some $l\in\BZ/n\BZ,\ x\in\BC$, or $L(x,y)$ for some
$x\in\BC,\ y\in\BC^*$. Here $L_l(x)$ denotes the $Q$-module with
$V_k=0$ for $k\ne l$, and $V_l=\BC,\ A_l=x$. Furthermore,
$L(x,y)$ denotes the $Q$-module with $V_l=\BC,\ A_l=x\ \forall l\in\BZ/n\BZ,\
\prod_{l\in\BZ/n\BZ}B_l=y$.

Now suppose $\dim(W_\infty)=1$. Then the irreducibility condition is equivalent
to the conjunction of stability condition of~\ref{queer} and of costability:
there is no proper $\BZ/n\BZ$-graded subspace $V'_\bullet\subset V_\bullet$
stable under $A_\bullet,B_\bullet$ and contained in $\on{Ker}(q_\bullet)$.
We will denote the open subset of stable and costable $Q$-modules
of dimension $(1,\ul{d}')\leq(1,\ul{d})$ by
$\mu^{-1}(0)^{sc,\Gamma}_{\ul{d}'}$. According to Chapter~2 of~\cite{nak},
the open subset
$\fZ_{\ul{d}'}\supset\mu^{-1}(0)^{sc,\Gamma}_{\ul{d}'}/G_{\ul{d}'}\subset
\mu^{-1}(0)^{s,\Gamma}_{\ul{d}'}/G_{\ul{d}'}=\CP_{\ul{d}'}$ coincides with
the moduli space of {\em locally free} parabolic sheaves, to be denoted by
$\CP^\circ_{\ul{d}'}$. Thus, the isomorphism classes of irreducible
$Q$-modules of dimension $(1,\ul{d}')$ are parametrized by
$\CP^\circ_{\ul{d}'}$.

We conclude that the set of $\BC$-points of $\fZ_{\ul{d}}$ is a disjoint
union of the following strata. We fix an $n$-tuple $\ul{d}'\leq\ul{d}$,
a collection of positive integers $m_1,\ldots,m_r$, and also collections
of positive integers $(m_{l1},\ldots,m_{l,r_l})_{l\in\BZ/n\BZ}$
such that for any $l$ we have
$d_l=d'_l+\sum_{i=1}^r m_i+\sum_{j=1}^{r_l}m_{lj}$.
Then the corresponding stratum is formed by the isomorphism classes of
semisimple $Q$-modules of type $R\oplus \bigoplus_{i=1}^r
L(x_i,y_i)^{\oplus m_i}\oplus
\bigoplus_{l\in\BZ/n\BZ}\bigoplus_{j=1}^{r_l}L_l(x_j)^{\oplus m_{lj}}$ where
$R\in\CP^\circ_{\ul{d}'}$, and all the pairs $(x_i,y_i)_{i=1,\ldots,m_r}$
are distinct, and for any $l$ all the points $x_j,\ j=1,\ldots,m_{l,r_l}$,
are distinct.

\subsection{Dimension of $\mu^{-1}(0)^\Gamma_{\ul{d}}$}
\label{estim}
We consider the configuration space of $\BZ/n\BZ$-colored points
$\BA^{\ul{d}}:=(\bC-\infty_\bC)^{(d_0)}\times\ldots\times
(\bC-\infty_\bC)^{(d_{n-1})}$. We denote $\mu^{-1}(0)^\Gamma_{\ul{d}}$ by
$\sM_{\ul{d}}$ for short. We have a morphism $\Upsilon:\ \sM_{\ul{d}}\to
\BA^{\ul{d}}$ sending a quadruple $(A_\bullet,B_\bullet,p_\bullet,q_\bullet)$
to $(\on{Spec}A_0,\ldots,\on{Spec}A_{n-1})$.

\begin{prop}
\label{wilson}
Every fiber of $\Upsilon$ has dimension $\sum_{l\in\BZ/n\BZ}(d_l^2+d_l)$.
\end{prop}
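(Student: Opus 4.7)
The plan is to stratify each fiber $\Upsilon^{-1}(\ul{\chi})$ by the conjugacy classes of the operators $A_0, \ldots, A_{n-1}$, exploiting a key decoupling of the moment equations. The $l$-th equation $\mu_l = A_{l+1}B_l - B_lA_l + p_{l+1}q_l = 0$ involves only $(A_l, A_{l+1}, B_l, p_{l+1}, q_l)$; hence, with $A_\bullet$ fixed, the set of admissible $(B_\bullet, p_\bullet, q_\bullet)$ factors as a product $\prod_{l \in \BZ/n\BZ} Y_l(A_l, A_{l+1})$, where
\[
Y_l(A_l, A_{l+1}) := \{(B, p, q) \in \Hom(V_l, V_{l+1}) \oplus V_{l+1} \oplus V_l^* : A_{l+1}B - B A_l + pq = 0\}.
\]
The stratum indexed by conjugacy classes $C_l \subset \End(V_l)$ with characteristic polynomial $\chi_l$ then has dimension $\sum_l \dim C_l + \sum_l \dim Y_l(A_l^*, A_{l+1}^*)$, for any chosen representatives $A_l^* \in C_l$.

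To compute $\dim Y_l$, project $Y_l$ onto the $(p,q)$-coordinates: the image is $\{(p,q) : pq \in \on{Im}(\ad_{A_l,A_{l+1}})\}$, where $\ad_{A_l,A_{l+1}}(B) := A_{l+1}B - B A_l$, and each non-empty fibre is a $\ker(\ad_{A_l,A_{l+1}})$-torsor of dimension $k_l := \dim \ker(\ad_{A_l,A_{l+1}})$. Using that the rank-$\le 1$ cone in $\Hom(V_l, V_{l+1})$ has dimension $d_l + d_{l+1} - 1$, and that $(p,q)\mapsto pq$ has generic one-dimensional fibres but $\max(d_l, d_{l+1})$-dimensional fibre over $0$, a direct count yields
\[
\dim Y_l(A_l, A_{l+1}) = \max\bigl(d_l + d_{l+1},\ \max(d_l, d_{l+1}) + k_l\bigr).
\]
When all $A_l$'s are regular (i.e.\ $\dim C_l = d_l^2 - d_l$), a decomposition by generalized eigenvalues gives $k_l = \sum_\lambda \min(m^\lambda_l, m^\lambda_{l+1}) \le \min(d_l, d_{l+1})$, so $\dim Y_l = d_l + d_{l+1}$, and the regular stratum contributes exactly $\sum_l(d_l^2 - d_l) + \sum_l(d_l + d_{l+1}) = \sum_l(d_l^2 + d_l)$. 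Companion-matrix representatives of any prescribed $\chi_l$ are regular, so this stratum is always non-empty, yielding the lower bound.

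The main obstacle is the matching upper bound: showing no non-regular stratum exceeds $\sum_l(d_l^2 + d_l)$. As $A_l$ becomes less regular, $k_l$ grows (inflating $\dim Y_l$) while $\dim C_l$ shrinks accordingly, and one must verify that these two effects exactly offset. Decomposing each $A_l$ along its generalized eigenspaces reduces the problem to the nilpotent case (all $A_l$ nilpotent), where the bound becomes a combinatorial inequality involving the Jordan types of the nilpotent parts; this inequality can be checked directly using the standard formulas for $k_l$ and centralizer dimensions in terms of partitions, and is consistent with the small examples of \S\ref{ex}.
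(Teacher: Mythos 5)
Your stratification by the conjugacy classes of $A_\bullet$, with the resulting product decomposition $\prod_l Y_l(A_l,A_{l+1})$, is a sound framework, and your lower bound (companion matrices plus the complete-intersection estimate $\dim Y_l\ge d_l+d_{l+1}$) works. The gap is in the upper bound, and it occurs already at the formula $\dim Y_l=\max\bigl(d_l+d_{l+1},\ \max(d_l,d_{l+1})+k_l\bigr)$: your ``direct count'' tacitly assumes that the linear subspace $\on{Im}(\ad_{A_l,A_{l+1}})$, of codimension $k_l$, meets the rank-$\le1$ cone in the expected dimension $d_l+d_{l+1}-1-k_l$. This fails, because $\on{Im}(\ad)$ can contain large linear families of rank-one matrices. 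Concretely, take $d_l=d_{l+1}=2$, $A_l=0$, and $A_{l+1}$ the regular nilpotent $2\times2$ Jordan block. Then $\on{Im}(\ad)$ is the $2$-plane of matrices with vanishing second row, which consists entirely of matrices of rank $\le1$; the equation $A_{l+1}B=-pq$ amounts to $p_2q_1=p_2q_2=0$ together with two equations determining the second row of $B$ from $p_1$ and $q$, so the component $\{p_2=0\}$ already gives $\dim Y_l=5$, whereas your formula predicts $4$. Since the formula undercounts, it cannot serve as the input to an upper bound over the non-regular strata. Even on the regular stratum, where its value happens to be correct, the inequality $\le$ is not a transversality count: one has to show that a rank-one matrix lying in $\on{Im}(\ad)$ of two regular nilpotents must have all the relevant diagonals identically zero (the Wilson-type argument the paper runs), which is what actually confines $(p_{l+1},q_l)$ to a locus of dimension $\max(d_l,d_{l+1})$.

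The second problem is that you defer exactly the hard part. The assertion that no non-regular stratum exceeds $\sum_l(d_l^2+d_l)$ ``can be checked directly'' is, in substance, the whole proof: one must reduce to the case where all $A_l$ are nilpotent, bound the $(B_l,p_{l+1},q_l)$-freedom blockwise over pairs of Jordan blocks by $\sum_{i,j}\min(\varkappa^{(l)}_i,\varkappa^{(l+1)}_j)+\max(d_l,d_{l+1})$, and then prove the resulting combinatorial inequality (by induction on $\max_l\kappa^{(l)}_1$). None of this is routine, and with your formula for $\dim Y_l$ in place of the correct blockwise bound the deferred verification cannot go through as stated. To repair the argument you would need a genuine upper bound on $\dim\{(p,q):pq\in\on{Im}(\ad_{A_l,A_{l+1}})\}$ in terms of the Jordan types of both $A_l$ and $A_{l+1}$, and then actually carry out the offsetting inequality between the loss in $\dim C_l$ and the gain in $k_l$.
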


\proof First we assume that $\dim(\Upsilon^{-1}(D))=
\sum_{l\in\BZ/n\BZ}(d_l^2+d_l)$ for a colored divisor $D$ concentrated at
one point (with colored multiplicity). We will derive the general case
of the proposition from this particular case by induction in $\ul{d}$.
To this end, if a divisor
$D$ is a disjoint union of divisors $D^{(1)}$ and $D^{(2)}$ of degrees
$\ul{d}^{(1)}$ and
$\ul{d}^{(2)}$, and we know $\dim(\Upsilon^{-1}_{\ul{d}^{(1)}}(D^{(1)}))=
\sum_{l\in\BZ/n\BZ}((d^{(1)}_l)^2+d^{(1)}_l),\
\dim(\Upsilon^{-1}_{\ul{d}^{(2)}}(D^{(2)}))=
\sum_{l\in\BZ/n\BZ}((d^{(2)}_l)^2+d^{(2)}_l)$, we have to derive
$\dim(\Upsilon^{-1}(D))=\sum_{l\in\BZ/n\BZ}(d_l^2+d_l)$.

In effect, each space $V_l$ can be split into direct sum
$V_l=V^{(1)}_l\oplus V^{(2)}_l$, so that the endomorphism $A_l$ acquires the
block diagonal form $A_l=\left(\begin{array}{cc}
A_l^{(1)}&0\\
0&A_l^{(2)}
\end{array}\right)$, and $\on{Spec}A_l^{(1,2)}=D_l^{(1,2)}$. Note that
the space of such decompositions $V_l=V^{(1)}_l\oplus V^{(2)}_l$ is an open
subset in the product of two Grassmannians and has dimension
$2d_l^{(1)}d_l^{(2)}$. Now having written the matrices of
$(B_\bullet,p_\bullet,q_\bullet)$ in the block form according to our
decomposition, the equation $\mu=0$ takes the form
$$\left(\begin{array}{cc}
A_{l+1}^{(1)}&0\\
0&A_{l+1}^{(2)}
\end{array}\right)
\left(\begin{array}{cc}
B_l^{(11)}&B_l^{(12)}\\
B_l^{(21)}&B_l^{(22)}
\end{array}\right)-
\left(\begin{array}{cc}
B_l^{(11)}&B_l^{(12)}\\
B_l^{(21)}&B_l^{(22)}
\end{array}\right)
\left(\begin{array}{cc}
A_l^{(1)}&0\\
0&A_l^{(2)}
\end{array}\right)
+\left(\begin{array}{c}
p_{l+1}^{(1)}\\
p_{l+1}^{(2)}
\end{array}\right)
\left(\begin{array}{cc}
q_l^{(1)}&q_l^{(2)}
\end{array}\right)$$
$$=\left(\begin{array}{cc}
A_{l+1}^{(1)}B_l^{(11)}-B_l^{(11)}A_l^{(1)}+p_{l+1}^{(1)}q_l^{(1)}&
A_{l+1}^{(1)}B_l^{(12)}-B_l^{(12)}A_l^{(2)}+p_{l+1}^{(1)}q_l^{(2)}\\
A_{l+1}^{(2)}B_l^{(21)}-B_l^{(21)}A_l^{(1)}+p_{l+1}^{(2)}q_l^{(1)}&
A_{l+1}^{(2)}B_l^{(22)}-B_l^{(22)}A_l^{(2)}+p_{l+1}^{(2)}q_l^{(2)}
\end{array}\right)=
\left(\begin{array}{cc}
0&0\\
0&0
\end{array}\right).$$
In particular, we see that $(A^{(1)}_\bullet,B^{(11)}_\bullet,p^{(1)}_\bullet,
q^{(1)}_\bullet)$ (resp. $(A^{(2)}_\bullet,B^{(22)}_\bullet,p^{(2)}_\bullet,
q^{(2)}_\bullet)$) lies in $\sM_{\ul{d}^{(1)}}$
(resp. in $\sM_{\ul{d}^{(2)}}$). So by our induction hypothesis,
$\dim\{(A^{(1)}_\bullet,B^{(11)}_\bullet,p^{(1)}_\bullet,
q^{(1)}_\bullet):\ \on{Spec}A^{(1)}=D^{(1)}\}=
\sum_{l\in\BZ/n\BZ}((d^{(1)}_l)^2+d^{(1)}_l)$, and
$\dim\{(A^{(2)}_\bullet,B^{(22)}_\bullet,p^{(2)}_\bullet,
q^{(2)}_\bullet):\ \on{Spec}A^{(2)}=D^{(2)}\}=
\sum_{l\in\BZ/n\BZ}((d^{(2)}_l)^2+d^{(2)}_l)$. Recall that we also have
$2d_l^{(1)}d_l^{(2)}$ parameters for the choice of decomposition
$V_l=V_l^{(1)}\oplus V_l^{(2)}$. That already gives us the desired dimension
$\sum_{l\in\BZ/n\BZ}(d_l^2+d_l)$ altogether, and it only remains to prove
that the remaining equations have a unique solution in $B_l^{(12)},B_l^{(21)}$.
It follows from the fact that, say $A^{(2)}_{l+1}$ and $A^{(1)}_l$ having
disjoint spectra, do not admit any nontrivial intertwiners, and hence the
linear map $\Hom(V_l^{(1)},V_{l+1}^{(2)})\to\Hom(V_l^{(1)},V_{l+1}^{(2)}):\
B_l^{(21)}\mapsto A_{l+1}^{(2)}B_l^{(21)}-B_l^{(21)}A_l^{(1)}$ is an
isomorphism.

Since the statement of the proposition is obvious in case
$\sum_{l\in\BZ/n\BZ}d_l=1$, we have already proved the proposition in case
$D$ has no multiplicities (off-diagonal case). Moreover, we have proved that
$\Upsilon^{-1}(\BA^{\ul{d}}-\Delta)$ is smooth.

It remains to prove the proposition in the opposite extremal case when
$D$ is supported at one point. It does not matter, which point is it, so we
may and will assume it is 0. In other words, we assume that all the
endomorphisms $A_l$ are nilpotent. We follow the method of G.~Wilson in
his proof of~Lemma~1.11 of~\cite{w}. Suppose first that both $A_l$ and
$A_{l+1}$ are regular nilpotent. We choose bases in $V_l,V_{l+1}$ so that
the matrices of $A_l,A_{l+1}$ are Jordan blocks, and then we see that
the matrix of $A_{l+1}B_l-B_lA_l$ has the following property: for each
$i=1,\ldots,\min(d_l,d_{l+1})$ the sum of all elements in the $i$-th diagonal
(counting from the leftmost lowest corner) is 0. Now since
$A_{l+1}B_l-B_lA_l=-p_{l+1}q_l$ has rank 1, all these $\min(d_l,d_{l+1})$
diagonals
must vanish identically. It imposes the following restriction on the vector
$p_{l+1}$ and covector $q_l$ written down in our bases: the sum of numbers
of the last nonzero coordinate of $p_{l+1}$ and the first nonzero coordinate
of $q_l$ is greater than $\min(d_l,d_{l+1})$.
This means that the dimension of the
space of all possible collections $(p_{l+1},q_l)$ is at most
$\max(d_l,d_{l+1})$.

Recall that the dimension of the space of regular nilpotent matrices
$A_l$ (resp. $A_{l+1}$) is $d_l^2-d_l$ (resp. $d_{l+1}^2-d_{l+1}$).
Furthermore, for given $(A_l,A_{l+1},p_{l+1},q_l)$ the dimension of the
space of solutions of the linear equation $A_{l+1}B_l-B_lA_l=-p_{l+1}q_l$
equals (if it is not empty) the dimension of the space of intertwiners
$\on{Int}(A_l,A_{l+1})$, that is $\min(k,l)$. Altogether we obtain at most
$d_l^2+d_{l+1}^2-d_l-d_{l+1}+\min(d_l,d_{l+1})+\max(d_l,d_{l+1})$.
Summing up over all $l$
we obtain at most $\sum_{l\in\BZ/n\BZ}(d_l^2+d_l)$ parameters.

Now we turn to the general case and assume that the Jordan type of a
nilpotent matrix $A_l$ is given by a partition
$(\varkappa^{(l)}_1\geq\varkappa^{(l)}_2\geq\ldots)$.
Let $(\kappa^{(l)}_1\geq\kappa^{(l)}_2\geq\ldots)$ stand for the dual
partition. The space of all matrices $A_l$ of given type has dimension
$d_l^2-(\kappa^{(l)}_1)^2-(\kappa^{(l)}_2)^2-\ldots$.
We can choose some bases in the spaces $V_l$ so that the matrices of $A_l$
become the direct sums of Jordan blocks, and repeat the considerations of
two previous paragraphs blockwise. We come to the conclusion that the
dimension of the space of quadruples
$(A_\bullet,B_\bullet,p_\bullet,q_\bullet)$
such that the Jordan type of $A_l$ is
$(\varkappa^{(l)}_1\geq\varkappa^{(l)}_2\geq\ldots)$ is at most
$\sum_{l\in\BZ/n\BZ}(d_l^2-(\kappa^{(l)}_1)^2-(\kappa^{(l)}_2)^2-\ldots)+
\sum_{l\in\BZ/n\BZ}^{i,j\in\BN}\min(\varkappa^{(l)}_i,\varkappa^{(l+1)}_j)+
\sum_{l\in\BZ/n\BZ}\max(d_l,d_{l+1})$.
It is not hard to check (by induction in $\max_l(\kappa^{(l)}_1)$)
that this sum is at most
$\sum_{l\in\BZ/n\BZ}(d_l^2+d_l)$. On the other hand, the dimension of any
irreducible component of $\Upsilon^{-1}(\ul{d}\cdot0)$ cannot be less than
$\sum_{l\in\BZ/n\BZ}(d_l^2+d_l)$ since we have already seen that the generic
fiber of $\Upsilon$ has dimension $\sum_{l\in\BZ/n\BZ}(d_l^2+d_l)$.
This completes the proof of the proposition. \qed

\begin{cor}
\label{complete}
$\sM_{\ul{d}}$ is an irreducible reduced complete intersection in
$M^\Gamma_{\ul{d}}$.
\end{cor}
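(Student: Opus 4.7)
The plan is to compare two bounds on $\dim\sM_{\ul{d}}$: a lower bound from Krull's Hauptidealsatz applied to the moment map equations, and a matching upper bound from the fibration $\Upsilon$. The ambient space has dimension
\[
\dim M^\Gamma_{\ul{d}}=\sum_{l\in\BZ/n\BZ}(d_l^2+d_ld_{l+1}+2d_l),
\]
and $\mu$ consists of $\sum_l d_ld_{l+1}$ scalar equations (the matrix entries of the $\Hom(V_l,V_{l+1})$-valued relations $A_{l+1}B_l-B_lA_l+p_{l+1}q_l$). Hence every irreducible component of $\sM_{\ul{d}}=\mu^{-1}(0)^\Gamma_{\ul{d}}$ has dimension at least $\sum_l(d_l^2+2d_l)$.

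\propref{wilson} supplies the matching upper bound: since every fiber of $\Upsilon\colon\sM_{\ul{d}}\to\BA^{\ul{d}}$ has dimension $\sum_l(d_l^2+d_l)$ and $\dim\BA^{\ul{d}}=\sum_l d_l$, we get $\dim\sM_{\ul{d}}\le\sum_l(d_l^2+2d_l)$. Equality forces each $\mu$-equation to cut dimension by exactly one, so $\sM_{\ul{d}}$ is a complete intersection, pure of dimension $\sum_l(d_l^2+2d_l)$.

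For irreducibility, observe that the proof of \propref{wilson} also shows that $U:=\Upsilon^{-1}(\BA^{\ul{d}}-\Delta)$ is smooth. The fiber-dimension count forces $\dim U=\sum_l(d_l^2+2d_l)$, so $U$ is dense in $\sM_{\ul{d}}$. The base $\BA^{\ul{d}}-\Delta$ is an open dense subset of the irreducible variety $\BA^{\ul{d}}=\prod_l(\BA^1)^{(d_l)}$, hence connected. The fiber of $\Upsilon$ over a point $D$ with pairwise distinct colored support is itself connected: as in the proof of \propref{wilson}, the disjoint-spectrum hypothesis makes $B_l\mapsto A_{l+1}B_l-B_lA_l$ an isomorphism and thus determines the $B_l$ uniquely from $(A_\bullet,p_\bullet,q_\bullet)$, which reduces the fiber to a bundle with affine total space of the $(p_l,q_l)$'s over the connected variety $\prod_l\GL(V_l)/T_l$ of diagonalizations of the $A_l$. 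A smooth variety that is connected is irreducible, so $U$ is irreducible; purity of $\sM_{\ul{d}}$ rules out any component outside $U$, and $\sM_{\ul{d}}$ is irreducible. Reducedness is then automatic: a complete intersection in a smooth ambient space is Cohen--Macaulay, and a Cohen--Macaulay scheme with a smooth dense open subset has no embedded components and is generically reduced, hence reduced.

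The genuinely nontrivial input is the constant fiber-dimension estimate of \propref{wilson}; once that is granted, the main remaining obstacle is the connectedness of the generic fiber, which is where the proof uses the explicit quiver combinatorics (the off-diagonal decomposition argument) rather than pure commutative algebra.
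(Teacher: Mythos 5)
Your proof is correct and follows the same route as the paper: the complete intersection property from comparing the Krull lower bound with the fiber-dimension upper bound of Proposition~\ref{wilson}, irreducibility from the fact that $\Upsilon^{-1}(\Delta)$ has too small a dimension to carry a component while $\Upsilon^{-1}(\BA^{\ul{d}}-\Delta)$ is smooth and irreducible, and reducedness from Cohen--Macaulayness ($S_1$) plus generic smoothness ($R_0$), which is exactly the content of the EGA reference the paper cites. You merely make explicit two points the paper leaves implicit, namely the connectedness of the generic fiber via the fibration over $\prod_l \GL(V_l)/T_l$ and the $R_0+S_1$ criterion.
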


\proof The complete intersection property is clear from the comparison of
dimensions. It is also clear that $\dim\Upsilon^{-1}(\Delta)<
\sum_{l\in\BZ/n\BZ}(d_l^2+2d_l)$, and hence the closure of
$\Upsilon^{-1}(\BA^{\ul{d}}-\Delta)$ is the unique irreducible component
of $\sM_{\ul{d}}$. Finally, it was shown during the proof of
Proposition~\ref{wilson} that $\Upsilon^{-1}(\BA^{\ul{d}}-\Delta)$ is smooth,
and in particular, reduced. It follows from Proposition~5.8.5 of~\cite{ega} that
$\sM_{\ul{d}}$ is reduced. \qed

\begin{rem}
\label{trav}
{\em The subscheme $\Upsilon^{-1}(\ul{d}\cdot0)$ studied in the proof
of~Proposition~\ref{wilson} contains the nilcone $\CN_{\ul{d}}\subset
\sM_{\ul{d}}$. In the situation and notations of Example~\ref{sl2} the
nilcone $\CN_d\subset M_d$ is cut out by the equations
$a_1=\ldots=a_d=0=b_0=\ldots=b_{d-1}$. Equivalently, we require both
endomorphisms $A$ and $A+q\circ p$ to be nilpotent. Hence $\CN_d$
coincides with the {\em mirabolic nilpotent cone} introduced by
R.~Travkin in sections~1.3 and~3.2 of~\cite{t} (under the name of $Z$).
The beautiful geometry of $\CN_d$ studied in {\em loc. cit.} suggests
that $\CN_{\ul{d}}$ might be an interesting object in itself.}
\end{rem}

\subsection{Proof of Theorem~\ref{bij}.a)}
\label{big}
The categorical quotient $\fZ_{\ul{d}}$ inherits the properties of being
reduced and irreducible from $\sM_{\ul{d}}$. To prove the normality of
$\fZ_{\ul{d}}$ we will use~Corollary~7.2 of~\cite{cb}. To this end we will
exhibit a normal open subscheme $U\subset\fZ_{\ul{d}}$ such that its
complement $Y\subset\fZ_{\ul{d}}$ is of codimension 2, and
$\Psi^{-1}(Y)$ is of codimension 2 in $\sM_{\ul{d}}$. Here
$\Psi:\ \sM_{\ul{d}}\to\fZ_{\ul{d}}$ is the natural projection.
Note that $\sM_{\ul{d}}$ is Cohen-Macaulay (being a complete intersection),
in particular, it has property $(S_2)$. So all the conditions of
{\em loc. cit.} will be verified, and it will guarantee the normality of
$\fZ_{\ul{d}}$.

To construct $U\subset\fZ_{\ul{d}}$ note that the morphism
$\Upsilon:\ \sM_{\ul{d}}\to\BA^{\ul{d}}$ evidently factors as
$\sM_{\ul{d}}\stackrel{\Psi}{\to}\fZ_{\ul{d}}\stackrel{\Phi}{\to}\BA^{\ul{d}}$
for a uniquely defined morphism $\Phi$. We introduce an open subset
$\hat U\subset\BA^{\ul{d}}$ formed by all the colored configurations where
at most 2 points collide. We set $U:=\Phi^{-1}(\hat U)$.

Evidently, the complement $\BA^{\ul{d}}-\hat U$ is of codimension 2
in $\BA^{\ul{d}}$, and so the codimension conditions on $U$ are satisfied.
It remains to prove that $U$ is normal. The argument of the first part of the
proof of Proposition~\ref{wilson} shows that after an \'etale base change
in a formal neighbourhood of a point in $\hat U$
(an ordering of distinct points in a configuration in $\hat U$), both
$\Upsilon^{-1}(\hat U)$ and $\Phi^{-1}(\hat U)=U$ decompose into a direct
product of a smooth scheme, and a scheme of one of
Examples~\ref{sl2},~\ref{sl3},~\ref{hatsl2}. Namely, Example~\ref{sl2}
occurs if two points of the same color collide; Example~\ref{sl3} occurs
if two points of different colors collide, and $n>2$; finally,
Example~\ref{hatsl2} occurs if two points of different colors collide, and
$n=2$. Obviously, all the schemes of the above Examples are normal.
As normality is stable under the \'etale base change and the formal completion,
the proof of~Theorem~\ref{bij}.a) is complete. \qed

\subsection{Proof of Theorem~\ref{bij}.b)}
\label{d-vo}
To prove b), we recall the stratification of
$Z^{\ul{d}}$ introduced in section~6.6 of~\cite{fgk}. It obviously coincides
with the stratification of $\fZ_{\ul{d}}$ introduced in~\ref{strat}.
In particular, we have a bijection between the sets of $\BC$-points of
$Z^{\ul{d}}$ and $\fZ_{\ul{d}}$.
Moreover, for a $\BC$-point $s$ in a stratum of $Z^{\ul{d}}$,
and the same named corresponding point in the corresponding stratum
of $\fZ_{\ul{d}}$, the (reduced) fibers $\pi^{-1}(s)\subset\CP_{\ul{d}}
\supset\varpi^{-1}(s)$ coincide. In effect, they are both formed by all
the parabolic sheaves with given saturation and defect in terminology of
{\em loc. cit.} Now the existence of $\eta$ follows from normality of
$\fZ_{\ul{d}}$ e.g. by the argument in the proof of~Proposition~2.14
of~\cite{bf2}. Theorem~\ref{bij} is proved. \qed

\begin{conj}
The morphism $\eta:\ \fZ_{\ul{d}}\to Z^{\ul{d}}$ is an isomorphism.
\end{conj}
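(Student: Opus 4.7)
The plan is to upgrade the bijection on $\BC$-points of~\thmref{bij} to a scheme-theoretic isomorphism, by first reducing the conjecture to normality of $Z^{\ul{d}}$ and then attacking that via factorization.

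First, I would show that $\eta$ is a finite birational morphism. The natural projection $\pi:\CP_{\ul{d}}\to\fZ_{\ul{d}}$ is surjective and projective, being the map from the GIT quotient $\mu^{-1}(0)^{s,\Gamma}_{\ul{d}}/G_{\ul{d}}$ to the affine quotient $\mu^{-1}(0)^\Gamma_{\ul{d}}//G_{\ul{d}}$. Since $\varpi=\eta\circ\pi$ is proper and $\pi$ is surjective, the standard descent criterion (EGA~II~5.4.3) forces $\eta$ to be proper as well. Combined with the bijection on closed points from~\thmref{bij}.b) this makes $\eta$ proper and quasi-finite, hence finite; and in characteristic zero the bijection on closed points forces the induced function-field extension to be trivial, so $\eta$ is birational. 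Because $\fZ_{\ul{d}}$ is normal by~\thmref{bij}.a), $\eta$ is then the normalization morphism of $Z^{\ul{d}}$, and the conjecture becomes equivalent to asserting that $Z^{\ul{d}}$ is already normal.

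Second, I would try to prove normality of $Z^{\ul{d}}$ along the lines of~\ref{big}. Both $\fZ_{\ul{d}}$ and $Z^{\ul{d}}$ lie over the colored configuration space $\BA^{\ul{d}}$, and $\eta$ intertwines the two factorization morphisms. Over the open locus $\hat U\subset\BA^{\ul{d}}$ where at most two points collide, the factorization property of the Drinfeld Zastava from~\cite{fgk,bfg} should identify $Z^{\ul{d}}|_{\hat U}$ étale-locally with a product of a smooth scheme and one of the three local models of Examples~\ref{sl2},~\ref{sl3},~\ref{hatsl2}. Each of these local models is manifestly normal (respectively smooth, the conifold times $\BA^1$, and an $A_1$-surface singularity times $\BA^1$). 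Since the complement $\BA^{\ul{d}}-\hat U$ is of codimension~$2$, this yields that $Z^{\ul{d}}$ is regular in codimension~$1$; Serre's criterion via~Corollary~7.2 of~\cite{cb} then gives normality, provided one has verified the $(S_2)$ condition on $Z^{\ul{d}}$.

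The main obstacle is the étale-local matching of $Z^{\ul{d}}$ with the three local models near colliding points. In the finite case $d_0=0$ one has explicit Plücker/Miura-type coordinates on $Z^{\ul{d}}$ (see~\cite{fgk,bf2}) which should allow one to carry out this matching and to conclude also the $(S_2)$ property; the affine case is harder because an equally concrete coordinate description of the closure of the Kashiwara flag scheme of $\hsl_n$ and its factorization structure is not readily available in the literature, and the identification with Example~\ref{hatsl2} at a fully degenerate point would have to be done by hand. An alternative and perhaps cleaner route would avoid normality altogether: construct explicit generators of $\BC[Z^{\ul{d}}]$ and express them as polynomials in the GIT invariants of~\secref{ex}, thereby producing a surjective algebra map $\BC[Z^{\ul{d}}]\twoheadrightarrow\BC[\fZ_{\ul{d}}]^{G_{\ul{d}}}$; combined with the birationality of $\eta$, this would directly show that $\eta$ is a closed immersion between irreducible varieties of the same dimension, hence an isomorphism.
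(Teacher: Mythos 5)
This statement is left as an open conjecture in the paper: there is no proof of it in the text, so there is nothing to compare your argument against, and the question is only whether your proposal actually closes it. It does not, and you say as much yourself. What is genuinely correct and worth keeping is your first step: since $\varpi=\eta\circ\pi$ is proper and $\pi$ is surjective, $\eta$ is proper; being a proper morphism of affine varieties it is finite; it is birational because it restricts to the identity on the common dense open subset $\CP^\circ_{\ul{d}}$; and since its source $\fZ_{\ul{d}}$ is normal by \thmref{bij}.a), $\eta$ is the normalization of $Z^{\ul{d}}$. Together with the bijectivity of \thmref{bij}.b), the conjecture is therefore equivalent to the normality of $Z^{\ul{d}}$. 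This reduction is sound and is a genuine (if modest) sharpening of what the paper states.

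The gap is the second step, and it is not a technicality. To run the argument of~\ref{big} on $Z^{\ul{d}}$ you need two inputs, neither of which you supply. First, the identification of $Z^{\ul{d}}$ \'etale-locally over $\hat U$ with a smooth factor times one of the models of Examples~\ref{sl2}--\ref{hatsl2}: those local models were computed by invariant theory on the quiver side, i.e.\ they are local models of $\fZ_{\ul{d}}$, and asserting that $Z^{\ul{d}}$ has the same local models near a collision point is essentially equivalent to the conjecture there — you would be assuming locally what you are trying to prove. The factorization property of $Z^{\ul{d}}$ from~\cite{bfg} gives you a product decomposition indexed by the distinct points of the configuration, but it identifies each factor with a smaller Zastava space, not with the corresponding quiver model. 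Second, Serre's criterion needs $(S_2)$ for $Z^{\ul{d}}$; for $\fZ_{\ul{d}}$ this came for free from the complete-intersection presentation of $\sM_{\ul{d}}$ (Corollary~\ref{complete}) together with Crawley-Boevey's result, and there is no analogous presentation of $Z^{\ul{d}}$ to appeal to. Your alternative route (exhibiting generators of $\BC[Z^{\ul{d}}]$ in the image of $\eta^*$ to force surjectivity of $\BC[Z^{\ul{d}}]\to\BC[\fZ_{\ul{d}}]$) is a reasonable program, but as written it is a program, not a proof.
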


\begin{rem}
{\em This conjecture was proved in~\cite{BF11}.}
\end{rem}

\subsection{The character of $\BC[\fZ_{\ul{d}}]$}
\label{character}
Corollary~\ref{complete} gives rise to a formula for the character of
$\BC[\fZ_{\ul{d}}]$. Let $T$ stand for the Cartan torus of $GL(W)$ which
acts on the basis vector $w_k$ via the character $\bt_k,\ k=1,\ldots,n$.
Thus $\bT:=\BC^*\times\BC^*\times T$ acts on $\CP_{\ul{d}}$ via the
action of the first (resp. second) copy of $\BC^*$ on $\bC$ (resp. on $\bX$)
via the character $\bv$ (resp. $\bfu$), see~\ref{PS}. The relation to
the notations of~\cite{bf} is as follows: $\bt_k=t_k^2,\ \bv=v^2,\ \bfu=u^2$.
Now the character of $\BC[\fZ_{\ul{d}}]$ as a $\bT$-module is a formal
power series in $\bt_1,\ldots,\bt_n,\bfu,\bv$ which is actually a Laurent
expansion of a rational function to be denoted by $F_{\ul{d}}$.

To calculate $F_{\ul{d}}$ we note that the action of $\bT$ on
$\BC[\fZ_{\ul{d}}]$ arises from the following action of $\bT$ on the
symmetric algebra $\BC[M^\Gamma_{\ul{d}}]$. Let us choose a base
$v_{l,1},\ldots,v_{l,d_l}$ in $V_l$, and denote the corresponding matrix
elements of $A_l$ (resp. $B_l,p_l,q_l$) by
$(A_{l}^{(ij)})_{1\leq i\leq d_l}^{1\leq j\leq d_l}$
(resp. $(B_{l}^{(ij)})_{1\leq i\leq d_l}^{1\leq j\leq d_{l+1}},\
(p_l^{(i)})_{1\leq i\leq d_l},\
(q_l^{(i)})_{1\leq i\leq d_l}$). Moreover, let us denote by
$\sT$ the Cartan torus of $G_{\ul{d}}$ acting on a base vector
$v_{l,i}$ via the character $\st_{l,i}$. Then the eigenvalues of the
$\bT\times\sT$-action on the generators of $\BC[M^\Gamma_{\ul{d}}]$
are as follows: $A_{l}^{(ij)}:\ \bv\st_{l,i}\st_{l,j}^{-1},\qquad
B_{l}^{(ij)}:\ \bfu^{\delta_{0,l}}\st_{l,i}\st_{l+1,j}^{-1},\qquad
p_l^{(i)}:\ \bfu^{\delta_{1,l}}\bv\bt_{l-1}\st_{l,i}^{-1},\qquad
q_l^{(i)}:\ \bt_l^{-1}\st_{l,i}$.

The character of the $\bT\times\sT$-action on the symmetric algebra
$\BC[M^\Gamma_{\ul{d}}]$ equals $S_{\ul{d}}:=$
\begin{multline*}\prod_{\substack{l\in\BZ/n\BZ\\1\leq i,j\leq d_l}}(1-\bv\st_{l,i}\st_{l,j}^{-1})^{-1}
\prod_{\substack{l\in\BZ/n\BZ\\1\leq i\leq d_l\\
1\leq j\leq d_{l+1}}}
(1-\bfu^{\delta_{0,l}}\st_{l,i}\st_{l+1,j}^{-1})^{-1}\times\\ \times
\prod_{\substack{l\in\BZ/n\BZ\\1\leq i\leq d_l}}
(1-\bfu^{\delta_{1,l}}\bv\bt_{l-1}\st_{l,i}^{-1})^{-1}
\prod_{\substack{l\in\BZ/n\BZ\\1\leq i\leq d_l}}(1-\bt_l^{-1}\st_{l,i})^{-1}.
\end{multline*}
The space of equations cutting out $\sM_{\ul{d}}\subset M^\Gamma_{\ul{d}}$
has a natural base consisting of the matrix elements
$(E_{l}^{(ij)})_{1\leq i\leq d_l}^{1\leq j\leq d_{l+1}}$ of the matrices
$A_{l+1}B_l-B_lA_l+p_{l+1}q_l$. The eigenvalue of the $\bT\times\sT$-action
on $E_{l}^{(ij)}$ is $\bfu^{\delta_{0,l}}\bv\st_{l,i}\st_{l+1,j}^{-1}$.
The (graded) character of the $\bT\times\sT$-action on the external algebra
generated by $\{(E_{l}^{(ij)})_{1\leq i\leq d_l}^{1\leq j\leq d_{l+1}}\}$
equals
$\Lambda_{\ul{d}}:=\prod\limits_{\substack{l\in\BZ/n\BZ\\1\leq i\leq d_l\\
1\leq j\leq d_{l+1}}}
(1-\bfu^{\delta_{0,l}}\bv\st_{l,i}\st_{l+1,j}^{-1}).$
According to Corollary~\ref{complete}, the character of the
$\bT\times\sT$-action on $\BC[\sM_{\ul{d}}]$ equals
$S_{\ul{d}}\Lambda_{\ul{d}}$. Finally, the character $F_{\ul{d}}$ of the
$\bT$-action on $\BC[\fZ_{\ul{d}}]=\BC[\sM_{\ul{d}}]^{G_{\ul{d}}}$ equals
$(1,S_{\ul{d}}\Lambda_{\ul{d}})_\sT$ where $(\cdot,\cdot)_\sT$ is the scalar
product of $G_{\ul{d}}$-characters.

\section{Hamiltonian reduction}

\subsection{Poisson structure on Laumon and Drinfeld spaces}
\label{poi}
Recall that $\CP^\circ_{\ul{d}}\subset\CP_{\ul{d}}$ stands for the open
subset of locally free parabolic sheaves. According to section~5 of~\cite{fgk},
$\CP^\circ_{\ul{d}}$ is the moduli space of based maps
of degree $\ul{d}$ from $(\bC,\infty_\bC)$
to the Kashiwara flag scheme of the affine Lie algebra $\widehat{\fsl}(n)$.
According to section~1 of~\cite{bfg}, such a moduli space of based maps
is defined for any Kac-Moody Lie algebra $\fg$; let us denote it by
$\CP^\circ_{\fg,\ul{d}}$. In case $\fg$ is a simple Lie algebra, a symplectic
structure on $\CP^\circ_{\fg,\ul{d}}$ was constructed in~\cite{fkmm}.
This construction applies {\em verbatim} to $\CP^\circ_{\fg,\ul{d}}$ for any
Kac-Moody Lie algebra $\fg$, in particular for $\fg=\widehat{\fsl}(n)$, and
provides $\CP^\circ_{\ul{d}}=\CP^\circ_{\widehat{\fsl}(n),\ul{d}}$ with a
symplectic structure $\Omega$, and corresponding Poisson bracket
$\{\cdot,\cdot\}_K$.
F.~Bottacin~\cite{b} has generalized this Poisson bracket to the moduli
spaces of stable parabolic locally free sheaves on arbitrary smooth projective
surfaces.

\begin{lem}
\label{extension}
The Poisson structure $\{\cdot,\cdot\}_K$ on $\CP^\circ_{\ul{d}}$ extends
uniquely to the same named Poisson structure on $\CP_{\ul{d}}$.
\end{lem}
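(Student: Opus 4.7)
The lemma splits into uniqueness and existence. \emph{Uniqueness} is immediate: $\CP_{\ul{d}}$ is smooth and irreducible, so the sheaf $\Lambda^2 T\CP_{\ul{d}}$ is locally free, and any regular section is determined by its restriction to the nonempty open subset $\CP^\circ_{\ul{d}}$; the Jacobi identity for an extension then follows from its validity on $\CP^\circ_{\ul{d}}$ by continuity. Note that a naive Hartogs-type argument for existence is not available: the boundary $\CP_{\ul{d}}\setminus\CP^\circ_{\ul{d}}$ carries codimension-one components, corresponding in the stratification of~\ref{strat} to the addition of a single summand $L_l(x)$, whose preimages under the semismall $\varpi$ are still of codimension one in $\CP_{\ul{d}}$.

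For \emph{existence} I would invoke F.~Bottacin's sheaf-theoretic construction~\cite{b}, slightly generalized from stable locally-free to stable torsion-free parabolic sheaves. Endow $\bS=\bC\times\bX$ with a Poisson bivector $\theta$, a suitable section of the anticanonical bundle whose behavior along $\bD_\infty\cup\bD_0$ is compatible with the framing conditions~\ref{PS}.(d)--(e). By the interpretation of~\ref{realization}, $\CP_{\ul{d}}$ is a component of the moduli of framed $\Gamma$-equivariant torsion-free sheaves on $\bS$, and at a point $\CF_\bullet\leftrightarrow\tilde\CF$ its tangent space is the appropriate $\Ext^1_\Gamma(\tilde\CF,\tilde\CF)$. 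Yoneda composition followed by $\theta\cdot\on{tr}$ gives a skew bilinear form
\[
\Ext^1_\Gamma(\tilde\CF,\tilde\CF)\otimes\Ext^1_\Gamma(\tilde\CF,\tilde\CF)\longrightarrow\Ext^2_\Gamma(\tilde\CF,\tilde\CF)\xrightarrow{\theta\cdot\on{tr}}\BC,
\]
which, by Bottacin's argument, assembles into a regular Poisson bivector on all of $\CP_{\ul{d}}$; the proof goes through in the torsion-free case essentially verbatim.

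The \emph{main obstacle} is the final step: identifying this Bottacin bivector with $\{\cdot,\cdot\}_K$ on the locally-free locus $\CP^\circ_{\ul{d}}$. The bracket of~\cite{fkmm} is defined on the based-maps model of $\CP^\circ_{\ul{d}}$ via a residue-type pairing on Cartan-valued rational functions, whereas the Bottacin bracket is defined on the sheaf model via a Yoneda pairing; the equivalence of~\cite{fgk} between the two models must carry one pairing to the other. I expect this to reduce, after choosing local coordinates, to an explicit residue calculation on $\bS$. An alternative route, avoiding Bottacin altogether, is to anticipate the quiver construction of the next sections: the chainsaw representation space $M^\Gamma_{\ul{d}}$ is naturally a Poisson subvariety of $\fa_{\ul{d}}^*$ with its Kirillov--Kostant bracket, and Marsden--Weinstein reduction by $G_{\ul{d}}$ furnishes $\CP_{\ul{d}}=\mu^{-1}(0)^{s,\Gamma}_{\ul{d}}/G_{\ul{d}}$ with a regular Poisson bracket a priori, reducing the lemma to the same compatibility check in the more concrete form of matching matrix-element brackets against the residue pairing.
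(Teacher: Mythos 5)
Your existence argument has a genuine gap, and you also dismiss prematurely the route the paper actually takes. You assert that ``a naive Hartogs-type argument for existence is not available'' because the boundary $\CP_{\ul{d}}\setminus\CP^\circ_{\ul{d}}$ has codimension-one components. That rules out the naive version, but the paper's proof is a two-step Hartogs argument that you have not considered: the \'etale $(x,y)$-coordinates of section~3.3 of~\cite{fkmm} are defined near the generic points of the boundary divisors (which are exactly the zero loci of the $y$-coordinates), and the explicit formula of Proposition~2 of \emph{loc.\ cit.} (reproduced as Proposition~\ref{kuznecov}) --- namely $\{x_{l,r},y_{k,s}\}_K=\delta_{lk}\delta_{rs}y_{k,s}$ and $\{y_{l,r},y_{k,s}\}_K=c_{lk}\,y_{l,r}y_{k,s}/(x_{l,r}-x_{k,s})$ --- has the $y$'s in the \emph{numerator}, while the denominators $x_{l,r}-x_{k,s}$ do not vanish at a generic point of any single divisor $\{y_{l,r}=0\}$. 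Hence the bivector $\{\cdot,\cdot\}_K$ is already regular at the generic points of all boundary divisors, i.e.\ off a closed subset of codimension $2$, and since $\CP_{\ul{d}}$ is smooth the section of $\Lambda^2T\CP_{\ul{d}}$ extends everywhere. This is the entire proof; the coordinates you would need are exactly the ones you later use in your ``alternative route.''

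As for the Bottacin construction: you yourself flag as the ``main obstacle'' the identification of the Yoneda/trace bivector with $\{\cdot,\cdot\}_K$ on $\CP^\circ_{\ul{d}}$, and you only ``expect'' this to reduce to a residue computation. That identification is the nontrivial content of any such argument; without it you have produced \emph{some} Poisson structure on $\CP_{\ul{d}}$, not an extension of $\{\cdot,\cdot\}_K$, so the lemma is not proved. The same objection applies to your Hamiltonian-reduction alternative: the paper does carry out that comparison, but only in later sections (Propositions~\ref{kuzn-sl2} and~\ref{kuzn-general}), and the comparison there again proceeds through the explicit $(x,y)$-coordinate formula --- precisely the computation your sketch defers. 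Your uniqueness argument, on the other hand, is fine and matches the implicit content of the paper's proof.
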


\proof
The complement $\CP_{\ul{d}}-\CP^\circ_{\ul{d}}$ is a union of
Cartier divisors (see e.g. section~11 of~\cite{bfg}). In the \'etale
$(x,y)$-coordinates of section~3.3 of~\cite{fkmm}, these divisors are
just the zero divisors of $y$-coordinates. Now the explicit formula
of~Proposition~2 of {\em loc. cit.} shows that our bracket $\{\cdot,\cdot\}_K$
extends regularly through the generic points of these divisors.
Since $\CP_{\ul{d}}$ is smooth, and the bivector field $\{\cdot,\cdot\}_K$ is
regular off codimension 2, it is regular everywhere. \qed

\begin{cor}
\label{extend}
The Poisson structure $\{\cdot,\cdot\}_K$ on
$\CP^\circ_{\ul{d}}\subset\fZ_{\ul{d}}$ extends
uniquely to the same named Poisson structure on $\fZ_{\ul{d}}$.
\end{cor}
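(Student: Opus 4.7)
The plan is to push the Poisson bracket on $\CP_{\ul{d}}$ (Lemma~\ref{extension}) forward along the proper birational morphism $\pi\colon\CP_{\ul{d}}\to\fZ_{\ul{d}}$ provided by Theorem~\ref{bij}.b. The key preliminary fact is that $\pi_{*}\O_{\CP_{\ul{d}}}=\O_{\fZ_{\ul{d}}}$. The morphism $\pi$ is projective (it is the canonical map from a GIT quotient to the categorical quotient), and it is birational: by Theorem~\ref{bij}.b it is bijective on $\BC$-points, and by the description in~\S\ref{strat} it restricts to the identity on the open subset $\CP^{\circ}_{\ul{d}}$ of stable-and-costable configurations, which sits inside both $\CP_{\ul{d}}$ and $\fZ_{\ul{d}}$. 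The Stein factorization writes $\pi$ as $\CP_{\ul{d}}\to\Spec(\pi_{*}\O_{\CP_{\ul{d}}})\to\fZ_{\ul{d}}$, where the second arrow is finite and birational with normal target (Theorem~\ref{bij}.a), hence an isomorphism, which gives the claim.

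Granting this, I would define the bracket locally as follows. For an affine open $U\subset\fZ_{\ul{d}}$ and $f,g\in\O(U)$, the regular function $\{\pi^{*}f,\pi^{*}g\}_{K}$ on $\pi^{-1}(U)$ is, by $\pi_{*}\O=\O$, the pullback of a unique function on $U$, which we declare to be $\{f,g\}$. Skew-symmetry, the Leibniz rule, and the Jacobi identity for $\{\cdot,\cdot\}$ follow from the same properties of $\{\cdot,\cdot\}_{K}$ on $\CP_{\ul{d}}$ by injectivity of $\pi^{*}\colon\O(U)\hookrightarrow\O(\pi^{-1}(U))$: each identity pulls back to the corresponding identity on $\pi^{-1}(U)$ furnished by Lemma~\ref{extension}. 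The extension property is then automatic, since $\pi$ restricts to an isomorphism $\CP^{\circ}_{\ul{d}}\iso\CP^{\circ}_{\ul{d}}\subset\fZ_{\ul{d}}$ on the locally free locus.

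Uniqueness is immediate from reducedness and irreducibility of $\fZ_{\ul{d}}$ (Theorem~\ref{bij}.a): the restriction map $\O(U)\to\O(U\cap\CP^{\circ}_{\ul{d}})$ is injective for every affine open $U\subset\fZ_{\ul{d}}$, so any two Poisson brackets on $\fZ_{\ul{d}}$ that agree on the dense open $\CP^{\circ}_{\ul{d}}$ must coincide. The only step carrying genuine content is the equality $\pi_{*}\O_{\CP_{\ul{d}}}=\O_{\fZ_{\ul{d}}}$; the argument above leans on Theorem~\ref{bij} for both the birationality of $\pi$ and the normality of the target, after which Stein factorization does the rest. One could instead try to invoke Hartogs directly on $\fZ_{\ul{d}}$, but this fails because the complement $\fZ_{\ul{d}}\setminus\CP^{\circ}_{\ul{d}}$ has strata of codimension one (e.g.\ single-point defects of a given color, cf.~\S\ref{strat}), so the push-forward route via $\pi$ is the natural one.
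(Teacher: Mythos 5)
Your proof is correct and follows essentially the same route as the paper: both reduce to the identity $\pi_*\O_{\CP_{\ul{d}}}=\O_{\fZ_{\ul{d}}}$ (you via Stein factorization plus normality of $\fZ_{\ul{d}}$, the paper via connectedness of the fibers of $\pi$ plus normality --- two faces of Zariski's main theorem) and then descend the bracket of Lemma~\ref{extension} exactly as you do. One small slip: Theorem~\ref{bij}.b asserts that $\eta$, not $\pi$, is bijective on $\BC$-points ($\pi$ is a semismall resolution with positive-dimensional fibers), but your parallel justification --- that $\pi$ restricts to an isomorphism over the dense open $\CP^\circ_{\ul{d}}$ --- is the correct one and suffices for birationality.
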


\proof
The (reduced) fibers of the resolution $\pi:\ \CP_{\ul{d}}\to\fZ_{\ul{d}}$ were
already identified with the (reduced) fibers of the resolution
$\varpi:\ \CP_{\ul{d}}\to Z^{\ul{d}}$ in~\ref{d-vo}. The latter fibers
are described in section~6 of~\cite{fgk}, in particular they are connected.
Due to normality of $\fZ_{\ul{d}}$, the algebra of functions
$\BC[\fZ_{\ul{d}}]$ coincides with the algebra $\BC[\CP_{\ul{d}}]$.
So the Poisson bracket on $\BC[\fZ_{\ul{d}}]$ is obtained just as global
sections of the Poisson bracket on $\CP_{\ul{d}}$. \qed

\subsection{Separating variables}
\label{split}
The Poisson bracket $\{\cdot,\cdot\}_K$ on $\CP^\circ_{\ul{d}}$ acquires a very
simple form in the \'etale $(x,y)$-coordinates of section~3.3 of~\cite{fkmm}.
We recall these coordinates in the quiver description of~\ref{queer}.
We consider an open subset $U\subset\CP^\circ_{\ul{d}}\subset\fZ_{\ul{d}}$
formed by the classes of (stable and costable) quadruples
$(A_l,B_l,p_l,q_l)_{l\in\BZ/n\BZ}$ such that all the endomorphisms $A_l$
have simple and disjoint spectra. We order their eigenvalues some way, and
denote them by $(x_{l1},\ldots,x_{l,d_l})_{l\in\BZ/n\BZ}$. Furthermore,
following Example~\ref{sl2}, for $r\in\BN$ we denote by $b_{l,r}$
the composition $q_l\circ A_{l}^r\circ p_l$. Moreover, for $j\in\BN$, we
denote by $\sigma_j$ the $j$-th elementary symmetric function (in particular,
$\sigma_0=1$). Finally, for $1\leq r\leq d_l$ we define
$y_{l,r}:=\sum_{s=0}^{d_l-1}(-1)^sb_{l,d_l-1-s}
\sigma_s(x_{l1},\ldots,x_{l,r-1},x_{l,r+1},\ldots,x_{l,d_l})$.

\begin{prop}
\label{kuznecov}
$\{x_{l,r},x_{k,s}\}_K=0=\{y_{l,r},y_{l,s}\}_K;\
\{x_{l,r},y_{k,s}\}_K=\delta_{lk}\delta_{rs}y_{k,s};\
\{y_{l,r},y_{k,s}\}_K=c_{lk}\frac{y_{l,r}y_{k,s}}{x_{l,r}-x_{k,s}}$ for $k\ne l$,
where $(c_{lk})_{k,l\in\BZ/n\BZ}$ stands for the Cartan matrix of
$\widehat{\fsl}(n)$.
\end{prop}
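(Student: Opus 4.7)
The plan is to reduce Proposition~\ref{kuznecov} to the explicit formula of \cite{fkmm}, Proposition~2, by identifying the quiver functions $(x_{l,r}, y_{l,r})$ on $U$ with the étale $(x,y)$-coordinates on $\CP^\circ_{\ul{d}}$ introduced in \cite{fkmm}, \S3.3. The Poisson structure $\{\cdot,\cdot\}_K$ of \secref{poi} is defined on $\CP^\circ_{\ul{d}}$ by the verbatim extension of the construction of \cite{fkmm} from finite to Kac--Moody $\fg$, so once the coordinate match is established, the bracket relations follow immediately.

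First I would match the $x$-coordinates. The morphism $\Upsilon:\sM_{\ul{d}}\to\BA^{\ul{d}}$ from \ref{estim} factors through $\fZ_{\ul{d}}$ and, restricted to $U\subset\CP^\circ_{\ul{d}}$, coincides with the factorisation morphism of the Zastava space used in \ref{strat}. Hence $(x_{l,1},\ldots,x_{l,d_l})=\Spec(A_l)$ is the $l$-coloured part of the associated divisor in $\BA^{\ul{d}}$, which is by definition the $x$-coordinate in \cite{fkmm}. Next I would match the $y$-coordinates by a Lagrange interpolation identity: the definition rewrites as
\[
y_{l,r}\;=\;q_l\,P_{l,r}(A_l)\,p_l,\qquad P_{l,r}(x):=\prod_{s\ne r}(x-x_{l,s}).
\]
On $U$ the endomorphism $A_l$ has simple disjoint spectrum, so $P_{l,r}(A_l)$ equals $\prod_{s\ne r}(x_{l,r}-x_{l,s})$ times the spectral projector onto the $x_{l,r}$-eigenline. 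Thus $y_{l,r}$ is, up to that Vandermonde factor (absorbed by the normalisation convention of \cite{fkmm}), the value at $x_{l,r}$ of the section extracting the $y$-coordinate from the parabolic sheaf. A dimension count ($2\sum_l d_l$ functions on a $2\sum_l d_l$-dimensional symplectic variety, with $y_{l,r}$ non-vanishing on $U$) then shows $(x_{l,r},y_{l,r})_{l,r}$ form an étale coordinate system.

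With both families of coordinates identified, the bracket relations in the statement are exactly the transcription of \cite{fkmm}, Proposition~2: the Kronecker deltas and the pole structure $(x_{l,r}-x_{k,s})^{-1}$ reflect the $\widehat{\fsl}(n)$ Dynkin adjacency, which enters the chainsaw quiver through the single family of arrows $B_l:V_l\to V_{l+1}$, so the Cartan matrix $(c_{lk})$ appears automatically. The main obstacle is the careful execution of the $y$-identification: one has to trace the dictionary of \ref{realization} and Remark~\ref{F0} between chainsaw quadruples and parabolic sheaves for every colour $l$ (not just $l=0$) to verify that $q_l\,P_{l,r}(A_l)\,p_l$ really reads off the Kuznetsov $y$-coordinate at the support point $x_{l,r}$. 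Once this dictionary is in hand, the three bracket formulas are a direct quotation of \cite{fkmm}.
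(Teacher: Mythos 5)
Your overall strategy is the same as the paper's: identify the quiver functions $(x_{l,r},y_{l,r})$ with the \'etale coordinates $x^r_l,y^s_k$ of \cite{fkmm} and then quote Proposition~2 of {\em loc.\ cit.} The $x$-matching via $\Upsilon$ and the characteristic polynomial of $A_l$ is fine. But the $y$-matching, which you defer as ``the main obstacle,'' is in fact the entire content of the paper's proof, and your sketch of it does not close the gap. In \cite{fkmm} the coordinate is $y^r_l=Q_l(x^r_l)$, where $(P_l,Q_l)$ is the polynomial-pair description of the Zastava space; to show $y_{l,r}=Q_l(x_{l,r})$ one must first prove that, up to a constant, $Q_l(u)/P_l(u)=\sum_{s\ge0}b_{l,s}u^{-s-1}=q_l(u-A_l)^{-1}p_l$. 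Nothing in the quiver description hands you this identity: $Q_l$ is defined intrinsically on $Z^{\ul d}$, not in terms of $(A_l,p_l,q_l)$. The paper (Lemma~\ref{len}) establishes it by (i) a bigrading argument pinning down $f_s=b_s+\sum_{r}\phi_{s,r}b_{s-r}$ with $\phi_{s,r}$ symmetric of degree $r$, and (ii) the computation that $y_{l,1}\cdots y_{l,d_l}$ cuts out the boundary divisor $BZ^{d_l}$ (the non-costable locus, analyzed via eigenvectors in $\Ker q_l$), matching it with the resultant of $P_l$ and $Q_l$. Your proposal contains no substitute for either step.

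Your spectral-projector reformulation $y_{l,r}=\prod_{s\ne r}(x_{l,r}-x_{l,s})\cdot q_l\Pi_r p_l$ is a correct restatement of the definition, but the parenthetical ``up to that Vandermonde factor (absorbed by the normalisation convention of \cite{fkmm})'' is misleading and, taken literally, wrong: once one knows $Q_l(u)/P_l(u)=\sum_s b_{l,s}u^{-s-1}$, the residue expansion gives $Q_l(x_{l,r})=\prod_{s\ne r}(x_{l,r}-x_{l,s})\,q_l\Pi_r p_l=y_{l,r}$ {\em with} the Vandermonde factor; discarding it would rescale $y_{l,r}$ by a non-constant function of the $x$'s and would destroy the brackets $\{y_{l,r},y_{k,s}\}_K$ you are trying to verify (only rescaling by genuine constants is harmless, which is why the paper reduces to showing $y'_s=c_s y_s$ and determines $c_s$ via unique factorization and the divisor computation). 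Also note the paper reduces the whole matching to the $\on{SL}(2)$ case of Example~\ref{sl2} rather than tracing the dictionary colour by colour; that reduction is what makes the boundary-divisor computation manageable.
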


\proof
We only have to check that our coordinates $x_{l,r},y_{k,s}$ coincide with
what is denoted by $x^r_l,y^s_k$ in~\cite{fkmm}, and then to
apply~Proposition~2 of {\em loc. cit.} (whose proof applies {\em verbatim}
to the case $\fg=\widehat{\fsl}(n)$). The matching of $x_{l,r},y_{k,s}$ with
$x^r_l,y^s_k$ clearly reduces to the case of $\on{SL}(2)$ of Example~\ref{sl2}.
So to simplify the notations, we denote $d_l$ by $d$, and $A_l$ by $A$,
and $x_{l,r}$ by $x_r$,
and $y_{l,r}$ by $y_r$, and $b_{l,s}$ by $b_s$. Note that the coordinates
$a_m$ of~\ref{sl2} are just $a_m=x_1^m+\ldots+x_d^m$.

Recall that $\fZ_d=Z^d$ naturally identifies with the space of pairs
$\{(P(z),Q(z))\}$ of polynomials in $z$ such that $\deg(P(z))=d$, and
the leading coefficient of $P(z)$ is 1, and $\deg(Q(z))<d$
(see section~1.2 of~\cite{fkmm}). The coordinates
$x^r,\ 1\leq r\leq d$, of {\em loc. cit.} are just the roots of $P(z)$, while
$y^r=Q(x^r)$. Evidently, $P(z)$ is nothing else than the characteristic
polynomial of the endomorphism $A$, so we can identify $x^r=x_r$. Let us
redenote $y^s$ by $y'_s$ to avoid a confusion of upper indices with powers.
Then it remains to prove that $y'_s=c_sy_s$ for some constant $c_s$.

Note that $\BC[\fZ_d]$ is bigraded so that $\deg(a_r)=(0,r),\ \deg(b_s)=(1,s)$.
This grading arises from the action of $\BC^*\times\BC^*$ on $\fZ_d=Z^d$. From
the point of view of Zastava $Z^d$, the first copy of $\BC^*$ is acting on
$\bC$ by ``loop rotations'', while the second copy of $\BC^*$ is acting as
the Cartan torus in $\on{SL}(2)$ (corresponding to the decomposition
$W=\langle w_1\rangle\oplus\langle w_2\rangle$). Thus, if we write
$P(z)=z^d+e_1z^{d-1}+\ldots+e_d,\ Q(z)=f_0z^{d-1}+\ldots+f_{d-1}$, then
$e_r$ has bidegree $(0,r)$, while $f_s$ has bidegree $(1,s)$.
Hence, up to a multiplicative constant, we have $f_s=b_s+\sum_{1\leq r\leq s}
\phi_{s,r}b_{s-r}$ where $\phi_{s,r}$ is a symmetric degree $r$ polynomial
in $x_1,\ldots,x_d$.

\begin{lem}
\label{len}
Up to a multiplicative constant, we have

a)$y'_s=y_s$;\qquad b)$f_s=b_s+\sum_{1\leq r\leq s}e_rb_{s-r}$;\qquad
c)$\frac{Q(z)}{P(z)}=\sum_{r=0}^\infty b_rz^{-1-r}$.
\end{lem}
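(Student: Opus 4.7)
My plan is to reduce all three parts to the single generating-function identity in (c) and deduce (b) and (a) from it by elementary manipulation. The key observation is that if $P(z) = \det(zI - A)$ is the characteristic polynomial of $A$ (which matches the Zastava polynomial $P(z)$ up to a scalar by the bigrading argument already given in the excerpt), then the classical adjugate formula $(zI-A)^{-1} = \mathrm{adj}(zI-A)/P(z)$ lets us set $\widetilde Q(z) := q \cdot \mathrm{adj}(zI-A) \cdot p$. This is a polynomial in $z$ of degree $\le d-1$, and the Neumann expansion yields
$$\frac{\widetilde Q(z)}{P(z)} = q(zI-A)^{-1}p = \sum_{r\ge 0} q A^r p \cdot z^{-1-r} = \sum_{r\ge 0} b_r z^{-1-r}.$$
Comparing with the definition of $Q(z) = f_0 z^{d-1} + \ldots + f_{d-1}$, which is pinned down by the bigrading up to a scalar, one checks that $\widetilde Q$ coincides with $Q$ up to a multiplicative constant, proving (c).

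For (b), I multiply the identity in (c) through by $P(z) = \sum_{j=0}^d e_j z^{d-j}$ (with $e_0 = 1$) and read off the coefficient of $z^{d-1-s}$ for $0 \le s \le d-1$. The summand $e_j z^{d-j} \cdot b_r z^{-1-r}$ contributes to $z^{d-1-s}$ precisely when $r = s - j$, which requires $0 \le j \le s$. Summing gives $\sum_{j=0}^{s} e_j b_{s-j} = b_s + \sum_{r=1}^{s} e_r b_{s-r}$, which is the claimed formula for $f_s$.

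For (a), I restrict to the open subset $U$ where $A$ has simple spectrum $(x_1,\ldots,x_d)$, so $P(z) = \prod_r(z-x_r)$ and $P'(x_r) = \prod_{s\ne r}(x_r - x_s)$. Partial fractions combined with (c) gives
$$\sum_{r=1}^d \frac{Q(x_r)}{P'(x_r)(z-x_r)} = \frac{Q(z)}{P(z)} = \sum_{k\ge 0} b_k z^{-1-k},$$
and expanding $1/(z-x_r) = \sum_{k\ge 0} x_r^k z^{-1-k}$ produces the Vandermonde linear system $b_k = \sum_r x_r^k u_r$ with $u_r := Q(x_r)/P'(x_r)$ for $0 \le k \le d-1$. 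Inverting this via the identity $\prod_{s\ne r}(z-x_s) = \sum_{k=0}^{d-1}(-1)^{d-1-k}\sigma_{d-1-k}(x_1,\ldots,\hat x_r,\ldots,x_d) z^k$ — which evaluated at $z = x_j$ equals $P'(x_r)\delta_{jr}$ — gives
$$Q(x_r) = \sum_{k=0}^{d-1}(-1)^{d-1-k}\sigma_{d-1-k}(x_1,\ldots,\hat x_r,\ldots,x_d)\, b_k,$$
and reindexing $s = d-1-k$ turns this into exactly the formula defining $y_r$ in~\ref{split}. Since $y'_r = Q(x_r)$ by definition in section~1.2 of~\cite{fkmm}, this gives (a).

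The main subtlety is purely bookkeeping of scalars: the identification $\fZ_d \cong Z^d$ is only canonical up to a bidegree-preserving rescaling of $P$ and $Q$, which is why the lemma is stated ``up to a multiplicative constant.'' Once that ambiguity is absorbed, everything reduces to the single identity in (c) and elementary Vandermonde/partial-fraction manipulations; there is no hard step.
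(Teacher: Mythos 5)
Your reductions among the three parts are fine and essentially coincide with the paper's: (b) and (c) are equivalent by multiplying through by $P(z)$, and (a) is equivalent to (b) by Lagrange interpolation and the Vandermonde identity, exactly as in the printed proof. The gap is in your proof of (c) itself. Setting $\widetilde Q(z):=q\cdot\on{adj}(zI-A)\cdot p$ and expanding $\widetilde Q(z)/P(z)=\sum_{r\ge0} b_rz^{-1-r}$ is correct, but the step ``$Q$ is pinned down by the bigrading up to a scalar, so $\widetilde Q=cQ$'' is false, and it is precisely the content of the lemma. The bidegree-$(1,s)$ component of $\BC[\fZ_d]=\BC[a_1,\dots,a_d,b_0,\dots,b_{d-1}]$ is $\bigoplus_{r=0}^{s}\Lambda^{(r)}\cdot b_{s-r}$, where $\Lambda^{(r)}$ denotes degree-$r$ symmetric polynomials in $x_1,\dots,x_d$; already for $s=1$ it is two-dimensional, spanned by $b_1$ and $a_1b_0$. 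So torus-equivariance of $\eta$ only gives $f_s=c_sb_s+\sum_{1\le r\le s}\phi_{s,r}b_{s-r}$ for unknown symmetric polynomials $\phi_{s,r}$ of degree $r$ --- which is exactly where the paper's proof \emph{starts} --- and identifying $\phi_{s,r}=e_r$ is the whole point. As written, your argument assumes the conclusion at this step.

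The paper closes this gap with a geometric input that your proof never uses: $y'_1\cdots y'_d=\on{Res}(P,Q)$ is, up to a constant, an equation of the boundary divisor $BZ^d=Z^d-\CP^\circ_d$, and one checks directly on the quiver side that $y_1\cdots y_d$ also cuts out $BZ^d$, because the boundary is the image of the stable-but-not-costable locus, where some eigenvector $v_m$ of $A$ lies in $\Ker(q)$; that condition is the vanishing of a determinant whose ratio to the Vandermonde is exactly $y_m$. Unique factorization in $\BC[x_1,\dots,x_d,b_0,\dots,b_{d-1}]$ then gives $y_s=c_sy'_s$, i.e.\ (a), from which (b) and (c) follow. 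To repair your proof you would need some comparable input pinning down the morphism $\eta:\fZ_d\to Z^d$ beyond its torus-equivariance --- for instance, that $P$ and $Q$ are coprime exactly over $\CP^\circ_d$ --- at which point you would essentially be running the paper's argument.
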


\proof
b) and c) are clearly equivalent. Moreover, a) is equivalent to b), i.e. to
$\phi_{s,r}=e_r$. In effect, the equality $y'_s=y_s$
is equivalent by Lagrange interpolation to
$$Q(z)=\sum_{1\leq r\leq d}y_r\prod_{m\ne r}(z-x_m)(x_r-x_m)^{-1}=$$
$$=\sum_{1\leq r\leq d}\left(\sum_{s=0}^{d-1}(-1)^sb_{d-1-s}
\sigma_s(x_1,\ldots,x_{r-1},x_{r+1},\ldots,x_d)\right)
\prod_{m\ne r}(z-x_m)(x_r-x_m)^{-1},$$
and hence $f_s=b_s+\sum_{1\leq r\leq s}e_rb_{s-r}$.

To prove a), by unique factorization in the polynomial ring
$\BC[x_1,\ldots,x_d,b_0,\ldots,b_{d-1}]=
\BC[x_1,\ldots,x_d,f_0,\ldots,f_{d-1}]$, it suffices to see that
$y_1\ldots y_d=cy'_1\ldots y'_d$ for some constant $c$, that is
$$\prod_{1\leq r\leq d}\left(\sum_{s=0}^{d-1}(-1)^sb_{d-1-s}
\sigma_s(x_1,\ldots,x_{r-1},x_{r+1},\ldots,x_d)\right)=c
\prod_{1\leq r\leq d}\left(\sum_{s=0}^{d-1}f_sx_r^{d-1-s}\right).$$
Now $y'_1\ldots y'_d$ is an equation (resultant of $P(z),Q(z)$)
of the boundary divisor $BZ^d:=
Z^d-\CP^\circ_d$ (defined uniquely up to a multiplicative constant).
It remains to prove that $y_1\ldots y_d$ is also an equation of
$BZ^d\subset Z^d$.

To this end, note that $Z^d=\fZ_d=\mu^{-1}(0)^s/GL(d)$ where $\mu^{-1}(0)^s$
stands for the open subset formed by all the stable triples $(A,p,q)$, i.e.
such that $V$ has no proper $A$-invariant subspaces containing $\on{Im}(p)$.
The preimage of $BZ^d$ in $\mu^{-1}(0)^s$ consists of stable but
noncostable triples $(A,p,q)$, i.e. such that $V$ has an $A$-invariant
vector $v$ contained in $\Ker(q)$. In case $A$ has a simple spectrum
$\{x_1,\ldots,x_d\}$ with corresponding eigenvectors $\{v_1,\ldots,v_d\}$,
the vector $v$ can only be one of $\{v_1,\ldots,v_d\}$.
We have $v=v_m$ if and only if the vector $^t(b_0,\ldots,b_{d-1})$ lies
in the span of the vectors $^t(1,x_r,\ldots,x_r^{d-1})_{r\ne m}$, i.e.

$$\det\left(\begin{array}{cccccccc}
1&1&\ldots&1&1&\ldots&1&b_0\\
x_1&x_2&\ldots&x_{m-1}&x_{m+1}&\ldots&x_d&b_1\\
\vdots&\vdots&\ddots&\vdots&\vdots&\ddots&\vdots&\vdots\\
x_1^{d-1}&x_2^{d-1}&\ldots&x_{m-1}^{d-1}&x_{m+1}^{d-1}&\ldots&x_d^{d-1}&b_{d-1}
\end{array}\right)=0.$$
This determinant is obviously divisible by the Vandermonde determinant
in the variables $(x_1,\ldots,x_{m-1},x_{m+1},\ldots,x_d)$,
and the ratio is equal to $$\sum_{s=0}^{d-1}(-1)^sb_{d-1-s}
\sigma_s(x_1,\ldots,x_{m-1},x_{m+1},\ldots,x_d)=y_m.$$
We conclude that $y_1\ldots y_d$ is an equation of $BZ^d\subset Z^d$.
The lemma is proved along with Proposition~\ref{kuznecov}. \qed

\subsection{Classical Hamiltonian reduction. $\fsl_2$ case.}
Let $V=\CC^d$ be a finite-dimensional vector space. Consider the Lie algebra $\fa:=(\fgl(V)\ltimes V)\oplus (\fgl(V)\ltimes V^*)$ (the semidirect product is with respect to the tautological action of $\fgl(V)$ on $V$ and $V^*$). Let $\fgl(V)_{\diag}$ be the diagonal $\fgl(V)$ inside $\fgl(V)\oplus \fgl(V)\subset\fa$ and $\pi:\fa^*\to\fgl(V)_{\diag}^*$ be the projection.

The Drinfeld Zastava space $\fZ_d$ is the categorical quotient $(\fgl(V)\oplus V\oplus V^*)/GL(V)$ and hence can be identified with the Hamiltonian reduction $\fa^*//GL(V)_{\diag}=\pi^{-1}(0)/GL(V)_{\diag}$. This provides a natural Poisson bracket $\{\cdot,\cdot\}$ on $\fZ_d$.

Let us write this explicitly. Let $e_{ij}, e_{ij}', q_i, p_i$, where $1\le i,j\le d$, be the basis of $\fa$ such that

\begin{equation}
[e_{ij},e_{kl}]=\delta_{jk}e_{il}-\delta_{il}e_{jk},\quad
[e_{ij}',e_{kl}']=\delta_{jk}e_{il}'-\delta_{il}e_{jk}',
\end{equation}

\begin{equation}
[e_{ij},e_{kl}']=[e_{ij},p_k]=[e_{ij}',q_k]=[p_k,q_l]=0,
\end{equation}

\begin{equation}
[e_{ij},q_k]=\delta_{jk}q_i,\quad
[e_{ij}',p_k]=-\delta_{ki}p_j.
\end{equation}
I.e. $e_{ij}$ (resp. $e_{ij}'$) is the standard basis of the first copy of $\fgl(V)$ (resp. second copy of $\fgl(V)$) and $q_i, p_i$ are the bases of $V$ and $V^*$, respectively.

The coordinate ring of the Hamiltonian reduction $\fZ_d=\fa^*//\fgl(V)_{\diag}$ is

$$
\CC[\fZ_d]=\CC[\fa^*//\fgl(V)_{\diag}]=
\left(\CC[e_{ij}, e_{ij}', q_i, p_i]/(e_{ij}+e_{ij}')\right)^{\fgl(V)_{\diag}}=\CC[e_{ij}, q_i, p_i]^{\fgl(V)}.
$$

\begin{rem} One can also treat $\CC[\fa^*//\fgl(V)_{\diag}]$ as $\CC[e_{ij}', q_i, p_i]^{\fgl(V)}$.
\end{rem}

\subsection{Calculation of Poisson brackets}

According to the classical invariant theory, the algebra $\CC[\fZ_d]=\CC[e_{ij}, q_i, p_i]^{\fgl(V)}$ is generated by the following polynomial invariants $$a_r:=\Tr A^r=\sum\limits_{i_1,\ldots,i_r} e_{i_1i_2}e_{i_2i_3}\ldots e_{i_ri_1},\ r=1,\ldots, d;$$ $$b_s:=\langle p, A^sq\rangle=\sum\limits_{i_1,\ldots,i_{s+1}} p_{i_1}e_{i_1i_2}e_{i_2i_3}\ldots e_{i_si_{s+1}}q_{i_{s+1}},\ s=0,\ldots,d-1.$$

Introduce the following notation:

$$
e_{ij}^{(r)}:=\sum\limits_{i_1,\ldots,i_{r-1}} e_{ii_1}e_{i_1i_2}e_{i_2i_3}\ldots e_{i_{r-1}j}.
$$
We also set $e_{ij}^{(0)}=\delta_{ij}$.

We will use the following relations:
\begin{lem}
\begin{eqnarray} \sum\limits_{i}e_{ki}^{(r)}e_{ij}^{(s)}=e_{kj}^{(r+s)};\\
\{e_{kl},e_{ij}^{(r)}\}=\delta_{il}e_{kj}^{(r)}-\delta_{kj}e_{il}^{(r)};\\
\{e_{kl},\sum\limits_{j=1}^de_{ij}^{(r)}q_j\}=\delta_{il}\sum\limits_{j=1}^de_{kj}^{(r)}q_j;\\
\{e_{kl}^{(s)},\sum\limits_{j=1}^de_{ij}^{(r)}q_j\}=\sum\limits_{t=1}^{s}\sum\limits_{j=1}^de_{kj}^{(r+t-1)}q_je_{il}^{(s-t)}.
\end{eqnarray}
\end{lem}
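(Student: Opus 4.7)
The plan is to verify these four identities by straightforward manipulations with the Lie--Kirillov--Kostant bracket on $\fa^*$, using repeated applications of the Leibniz rule together with induction on the exponents $r$ and $s$.

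Identity (1) is purely combinatorial: the expression $\sum_i e_{ki}^{(r)} e_{ij}^{(s)}$ literally spells out the $(k,j)$-entry of the product $A^r \cdot A^s = A^{r+s}$ (where $A$ is the matrix with entries $e_{ij}$), so it equals $e_{kj}^{(r+s)}$ by the definition of $e_{kj}^{(r+s)}$. No bracket is involved.

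For identity (2), I would induct on $r$. The case $r=1$ is exactly the first relation defining $\fa$, namely $\{e_{kl},e_{ij}\}=\delta_{jk}e_{il}-\delta_{il}e_{jk}$, after swapping indices appropriately to match the stated form. (More precisely, one reads off $\{e_{kl}, e_{ij}\} = \delta_{il} e_{kj} - \delta_{kj} e_{il}$.) For the inductive step, write $e_{ij}^{(r)} = \sum_m e_{im} e_{mj}^{(r-1)}$ and apply the Leibniz rule, then use the induction hypothesis and identity (1) to collapse the resulting sums, with the ``cross terms'' telescoping due to the Kronecker deltas.

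Identity (3) is obtained by expanding $e_{ij}^{(r)} q_j$ as a product and again using Leibniz: the bracket $\{e_{kl}, q_j\} = \delta_{lj} q_k$ produces one term, while $\{e_{kl}, e_{ij}^{(r)}\}$ is handled by (2). Summation over $j$ and the vanishing of $\delta_{kj}$ contributions after index matching leave only the term $\delta_{il} \sum_j e_{kj}^{(r)} q_j$. Alternatively one can induct on $r$ directly, mirroring the inductive step of (2).

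Identity (4) is the only one requiring serious work, and I expect the bookkeeping of indices to be the main obstacle. I would induct on $s$. For $s=1$ the statement reduces to (3). For the inductive step, write $e_{kl}^{(s)} = \sum_m e_{km}^{(s-1)} e_{ml}$, apply the Leibniz rule, use (3) to compute $\{e_{ml}, \sum_j e_{ij}^{(r)} q_j\}$, and use the induction hypothesis on $\{e_{km}^{(s-1)}, \sum_j e_{ij}^{(r)} q_j\}$. After reassembling the two pieces via identity (1) and relabeling the summation index $t$, the total becomes the claimed sum $\sum_{t=1}^s \sum_j e_{kj}^{(r+t-1)} q_j e_{il}^{(s-t)}$; the boundary terms $t=1$ and $t=s$ arise respectively from (3) and the $t=s-1$ term of the induction hypothesis combined with one Leibniz contribution. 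The main delicate point is to check that the various $\delta$-contractions which produce terms supported on the ``wrong'' indices cancel, which they do by the associativity encoded in (1).
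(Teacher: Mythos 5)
Your proof is correct and takes essentially the same route as the paper, which simply declares the lemma ``Straightforward'': all four identities follow from the Leibniz rule for the Lie--Poisson bracket together with identity (1) and induction on $r$ (resp.\ $s$), exactly as you describe. One small bookkeeping remark on identity (4): with your chosen decomposition $e_{kl}^{(s)}=\sum_m e_{km}^{(s-1)}e_{ml}$, the contribution computed via identity (3), namely $\sum_m e_{km}^{(s-1)}\{e_{ml},\sum_j e_{ij}^{(r)}q_j\}=\delta_{il}\sum_j e_{kj}^{(r+s-1)}q_j$, is the $t=s$ boundary term (it carries $e_{il}^{(0)}=\delta_{il}$), while the terms $t=1,\dots,s-1$ all come from the induction hypothesis after collapsing with identity (1) --- so your attribution of the boundary terms is slightly off, but this does not affect the validity of the argument.
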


\begin{proof}
Straightforward.
\end{proof}

\begin{prop}\label{prop-poisson-relations}
\begin{eqnarray}
\{a_r,a_s\}=0;\\
\{a_r,b_s\}=rb_{r+s-1};\\
\{b_r,b_s\}=\sum\limits_{m=s}^{r-1}b_{m}b_{r+s-m-1}.
\end{eqnarray}
\end{prop}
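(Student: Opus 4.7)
The approach is direct computation in $\CC[\fa^*]$ using the Kirillov--Kostant bracket and Leibniz. Since $a_r,b_s$ are written purely in terms of $e_{ij},p_i,q_j$, we never need to invoke $e_{ij}'$, only that the bracket descends to the $GL(V)_{\diag}$-invariants. The key tools are the four identities of the preceding Lemma together with $\{e_{ij},p_k\}=0$, $\{e_{ij},q_k\}=\delta_{jk}q_i$, $\{p_i,q_j\}=0$. For the first identity $\{a_r,a_s\}=0$: setting $l=k$ in the Lemma's bracket formula and summing over $k$ gives $\{a_r,e_{ij}\}=e_{ij}^{(r)}-e_{ij}^{(r)}=0$, the classical Casimir property of $\Tr A^r$, whence Leibniz yields $\{a_r,a_s\}=0$ since $a_s$ is a polynomial in the $e_{ij}$. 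For the second, since $\{a_r,p_i\}=0$ and $\{a_r,e_{ij}^{(s)}\}=0$ (Casimir applied factor-by-factor), only $\{a_r,q_j\}$ contributes. Iterating $\{e_{lm},q_n\}=\delta_{mn}q_l$ over the $r$ factors of each $e_{ll}^{(r)}$ and summing over $l$ via the multiplicative identity of the Lemma, all $r$ positions contribute equally, yielding $\{a_r,q_k\}=r\sum_j e_{kj}^{(r-1)}q_j$. One more collapse produces $\{a_r,b_s\}=rb_{r+s-1}$.

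For the third identity, Leibniz kills everything involving only $p$'s and $q$'s, leaving three non-vanishing contributions, from $\{e_{ij}^{(r)},e_{kl}^{(s)}\}$, $\{e_{ij}^{(r)},q_l\}$, and $\{q_j,e_{kl}^{(s)}\}$. Applying the Lemma's bracket formula to each of the $s$ factors of $e_{kl}^{(s)}$ and telescoping yields
\[
\{e_{ij}^{(r)},e_{kl}^{(s)}\}=\sum_{n=0}^{s-1}\bigl(e_{kj}^{(n)}e_{il}^{(r+s-n-1)}-e_{kj}^{(n+r)}e_{il}^{(s-n-1)}\bigr),
\]
and analogous iteration on the $r$ factors of $e_{ij}^{(r)}$ using $\{e_{ab},q_c\}=\delta_{bc}q_a$ gives $\{e_{ij}^{(r)},q_l\}=\sum_{t=0}^{r-1}\sum_n e_{in}^{(t)}q_n\,e_{lj}^{(r-1-t)}$. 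Contracting each of these with the surrounding $p$'s and $q$'s and collapsing adjacent $e^{(\cdot)}e^{(\cdot)}$ products via the Lemma's multiplicative identity, the three contributions simplify respectively to
\[
\sum_{n=0}^{s-1}\!\bigl(b_nb_{r+s-n-1}-b_{n+r}b_{s-n-1}\bigr),\qquad \sum_{t=0}^{r-1}b_tb_{r+s-1-t},\qquad -\sum_{t=0}^{s-1}b_tb_{r+s-1-t}.
\]

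The decisive step (and the principal obstacle in the bookkeeping) is that the first of these three sums vanishes identically: the substitution $m=n+r$ in the second half, combined with the commutativity $b_mb_{T-m}=b_{T-m}b_m$ where $T=r+s-1$, identifies it with the first half. Subtracting the third contribution from the second then yields $\{b_r,b_s\}=\sum_{t=s}^{r-1}b_tb_{r+s-1-t}$ for $r>s$; the case $r<s$ follows by antisymmetry, and $r=s$ is trivial. Beyond this cancellation, everything else is a routine application of Leibniz, the Lemma's formulas, and the collapsing of sums via the multiplicative identity $\sum_i e_{ki}^{(a)}e_{ij}^{(b)}=e_{kj}^{(a+b)}$.
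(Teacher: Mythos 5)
Your computation is correct and follows essentially the same route as the paper: a direct Leibniz calculation using the identities of the preceding Lemma, with the final reindexing $m\mapsto r+s-1-m$ (licit by commutativity) producing the asymmetric range $\sum_{m=s}^{r-1}$. The only organizational difference is that the paper applies the Lemma's fourth identity to the composite $\sum_j e_{ij}^{(s)}q_j$ all at once, yielding two sums instead of your three; your observation that the pure $\{e^{(r)},e^{(s)}\}$ contribution cancels on its own is a valid repackaging of the same cancellation.
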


\begin{proof} The first is obvious. Let us prove the second one:
\begin{multline*}
\{a_r,b_s\}=\{\sum\limits_{k} e_{kk}^{(r)},\sum\limits_{ij} p_{i}e_{ij}^{(s)}q_{j}\}=\\
=\sum\limits_{t=1}^r\sum\limits_{k,i,j}p_{i}e_{kj}^{(t+s-1)}q_je_{ik}^{(r-t)}=r\sum\limits_{ij} p_{i}e_{ij}^{(r+s-1)}q_{j}=rb_{r+s-1}.
\end{multline*}

And the third one:

\begin{multline*}
\{b_r,b_s\}=\{\sum\limits_{k,l} p_{k}e_{kl}^{(r)}q_{l},\sum\limits_{i,j} p_{i}e_{ij}^{(s)}q_{j}\}=\\
=\sum\limits_{t=1}^r\sum\limits_{k,l,i,j}p_kp_{i}e_{kj}^{(t+s-1)}q_je_{il}^{(r-t)}q_l-
\sum\limits_{m=0}^{s-1}\sum\limits_{k,l,i,j}p_kp_{i}e_{il}^{(m)}q_le_{kj}^{(r+s-m-1)}q_j=\\=
\sum\limits_{m=s}^{r-1}b_{m}b_{r+s-m-1}.
\end{multline*}

\end{proof}

Let $x_1,\ldots, x_d$ and $y_1,\ldots, y_d$ be the following \'etale coordinates on $\fZ_d$:

\begin{eqnarray*}
a_r=\sum\limits_{i=1}^d x_i^r,\quad y_i=\sum\limits_{r=0}^{d-1} (-1)^{r}\sigma_r(x_1,\ldots, \widehat{x_i},\ldots, x_d)b_{d-1-r},
\end{eqnarray*}
where $\sigma_r$ stands for the elementary symmetric function of degree $r$.

\begin{prop}\label{kuzn-sl2}
We have $\{x_i,x_j\}=0=\{y_i,y_j\}$ and $\{x_i,y_j\}=\delta_{ij}y_j$.
\end{prop}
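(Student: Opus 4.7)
The proof is a direct coordinate-change computation from Proposition~\ref{prop-poisson-relations}. We work on the open subset $U\subset\fZ_d$ on which the roots $x_1,\ldots,x_d$ of $P(z):=\prod_k(z-x_k)$ are pairwise distinct, so that $x_1,\ldots,x_d,y_1,\ldots,y_d$ form \'etale coordinates on $U$. The change of variables to $a_r,b_s$ is governed by $a_r=\sum_k x_k^r$ together with the partial-fraction identity
\[
b_t=\sum_{k=1}^d\frac{y_kx_k^t}{P'(x_k)},
\]
which follows from $Q(z)/P(z)=\sum_tb_tz^{-t-1}$ and $y_k=Q(x_k)$ (cf.\ Lemma~\ref{len}); both Jacobians are invertible on $U$, so every Poisson bracket on $U$ is determined by those of Proposition~\ref{prop-poisson-relations}.

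\textbf{The brackets $\{x_i,x_j\}$ and $\{x_i,y_j\}$.} The relation $\{x_i,x_j\}=0$ follows from $\{a_r,a_s\}=0$ via Newton's identities: the coefficients of $P(z)$ lie in $\BC[a_1,\ldots,a_d]$, so $\{P(z),P(w)\}=0$; evaluating at $z=x_i$, $w=x_j$ with Leibniz and $P(x_i)=P(x_j)=0$ gives $P'(x_i)P'(x_j)\{x_i,x_j\}=0$. Next, combining $\{a_r,b_s\}=rb_{r+s-1}$ with $\{x_k,x_l\}=0$ and Leibniz applied to $a_r=\sum_k x_k^r$ yields the Vandermonde system
\[
\sum_{k=1}^d x_k^{r-1}\{x_k,b_s\}=b_{r+s-1}\qquad(r=1,\ldots,d),
\]
whose unique solution on $U$ is $\{x_k,b_s\}=y_kx_k^s/P'(x_k)$, as verified by substitution into the partial-fraction identity. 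Plugging into $y_j=\sum_{r=0}^{d-1}(-1)^r\sigma_r(\widehat{x_j})b_{d-1-r}$ and using Leibniz (together with $\{x_i,x_k\}=0$) collapses the sum to
\[
\{x_i,y_j\}=\frac{y_i}{P'(x_i)}\prod_{k\neq j}(x_i-x_k)=\delta_{ij}y_j,
\]
since for $i\neq j$ the product contains $x_i-x_i=0$, while for $i=j$ it equals $P'(x_j)$.

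\textbf{The bracket $\{y_i,y_j\}$ and the main obstacle.} Applying the Jacobi identity to $\{x_k,\{y_i,y_j\}\}$ with the brackets already established gives
\[
\{x_k,\{y_i,y_j\}\}=(\delta_{ki}+\delta_{kj})\{y_i,y_j\},
\]
so $\{y_i,y_j\}/(y_iy_j)$ Poisson-commutes with every $x_k$; hence $\{y_i,y_j\}=y_iy_jg_{ij}(x)$ for some antisymmetric function $g_{ij}$ of the $x_k$'s alone. To show $g_{ij}\equiv 0$, compute $\{b_1,b_0\}$ in $(x,y)$-coordinates via $b_t=\sum_k y_kx_k^t/P'(x_k)$, Leibniz, and the brackets above. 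The off-diagonal derivative contributions (arising from $\partial_{x_j}(1/P'(x_i))=1/((x_i-x_j)P'(x_i))$ for $i\neq j$) symmetrize in $(i,j)$ to $\sum_{i\neq j}y_iy_j/(P'(x_i)P'(x_j))$, which together with the diagonal piece $\sum_iy_i^2/P'(x_i)^2$ (coming from $\partial_{x_i}(1/P'(x_i))=-P''(x_i)/(2P'(x_i)^2)$) assembles precisely into $b_0^2$. The remaining $g$-dependent part equals $\sum_{i<j}y_iy_jg_{ij}(x)(x_i-x_j)/(P'(x_i)P'(x_j))$; comparison with $\{b_1,b_0\}=b_0^2$ from Proposition~\ref{prop-poisson-relations}, together with the algebraic independence of the monomials $y_iy_j$, forces $g_{ij}(x)(x_i-x_j)=0$, hence $g_{ij}=0$ on $U$. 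The main obstacle is the bookkeeping in this last step: one must carefully track how the off-diagonal derivative terms (which are individually asymmetric in $i,j$) recombine with the diagonal $P''(x_i)$-contribution to reproduce $b_0^2$ exactly, so that the only residual term in the identity is the $g$-contribution to be killed.
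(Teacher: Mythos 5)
Your proof is correct, but it takes a genuinely different route from the paper's. For $\{x_i,y_j\}$ the paper restricts $GL(V)$-invariant functions to the slice $S=\ft\oplus V\oplus V^*$, writes $y_j|_S=\prod_{k\ne j}(x_j-x_k)\,p_jq_j$, and reads off the bracket as $\ad(e_{ii})$ acting on this product; you instead invert the Vandermonde system $\sum_k x_k^{r-1}\{x_k,b_s\}=b_{r+s-1}$ to get the closed formula $\{x_k,b_s\}=y_kx_k^s/P'(x_k)$ and then apply Leibniz. Both work; the paper's slice argument is shorter and is reused for the general ($\fsl_n$) case, while your explicit formula for $\{x_k,b_s\}$ is what powers your final step. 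The bigger divergence is in $\{y_i,y_j\}$: the paper expands everything in elementary symmetric functions, uses $\sigma_r(x_1,\ldots,\widehat{x_i},\ldots,x_d)=\sigma_r+x_j\sigma_{r-1}$ and the full family $\{b_r,b_s\}=\sum_m b_mb_{r+s-m-1}$, and cancels term by term; you use the Jacobi identity to pin down $\{y_i,y_j\}=y_iy_j\,g_{ij}(x)$ and then kill $g_{ij}$ by matching a single bracket $\{b_1,b_0\}=b_0^2$. I checked that the $g$-free part of your computation does reduce to $b_0^2$ — though note that the two $P''(x_k)/(2P'(x_k)^2)$ contributions (one from each factor) actually \emph{cancel} each other, and $b_0^2$ arises from the remaining diagonal term $\sum_k y_k^2/P'(x_k)^2$ plus the telescoped off-diagonal terms $\sum_{k\ne m}y_ky_m(x_m-x_k)/\bigl((x_m-x_k)P'(x_k)P'(x_m)\bigr)$ — so your description of the bookkeeping is slightly off even though the conclusion stands. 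Your approach buys economy (only one quadratic relation needs to be checked against the coordinate change) at the cost of working on the dense open locus where the $y_k$ are invertible; the paper's buys a uniform, if longer, identity valid without that caveat.
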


\begin{proof}
$\{x_i,x_j\}=0$ is obvious.

Let $\ft$ be the Cartan subalgebra of $\fgl(V)$ generated by $e_{ii}$ with $i=1,\ldots,d$. Note that each $GL(V)$-invariant function on $\fgl(V)\oplus V\oplus V^*$ is uniquely determined by its restriction to the $\ft$-invariant subspace $S:=\ft\oplus V\oplus V^*$. We have $$
y_j|_S=\prod\limits_{k\ne j}(x_j-x_k)p_jq_j.
$$
Hence
$$\{x_i,y_j\}|_S=\ad(dx_i)(y_j)|_S=\ad(e_{ii})(y_j|_S)=\delta_{ij}y_j|_S.$$
This implies $\{x_i,y_j\}=\delta_{ij}y_j$.

We have
\begin{multline*}
\{y_i,y_j\}=\\=\{\sum\limits_{r=0}^{d-1} (-1)^{r}\sigma_r(x_1,\ldots, \widehat{x_i} ,\ldots, x_d)b_{d-1-r},\sum\limits_{r=0}^{d-1} (-1)^{r}\sigma_r(x_1,\ldots, \widehat{x_j} ,\ldots, x_d)b_{d-1-r}\}=\\
=(\sum\limits_{r=0}^{d-1} (-1)^{r}\frac{\partial}{\partial x_j}\sigma_r(x_1,\ldots, \widehat{x_i} ,\ldots, x_d)b_{d-1-r})y_j-\\
-(\sum\limits_{r=0}^{d-1} (-1)^{r}\frac{\partial}{\partial x_i}\sigma_r(x_1,\ldots, \widehat{x_j} ,\ldots, x_d)b_{d-1-r})y_i+\\
+\sum\limits_{r=0}^{d-1}\sum\limits_{s=0}^{d-1} (-1)^{r+s}\sigma_r(x_1,\ldots, \widehat{x_i} ,\ldots, x_d)\sigma_s(x_1,\ldots, \widehat{x_j} ,\ldots, x_d)\{b_{d-1-r},b_{d-1-s}\}.
\end{multline*}

Set $\sigma_r:=\sigma_r(x_1,\ldots, \widehat{x_i}, \widehat{x_j},\ldots, x_d)$. Applying the equation $\sigma_r(x_1,\ldots, \widehat{x_i} ,\ldots, x_d)=\sigma_r+x_j\sigma_{r-1}$, we obtain
\begin{multline*}
\{y_i,y_j\}=\sum\limits_{r=0}^{d-1}\sum\limits_{s=0}^{d-1} (-1)^{r+s}\sigma_{r-1}(\sigma_s+x_i\sigma_{s-1})b_{d-1-r}b_{d-1-s}-\\
-\sum\limits_{r=0}^{d-1}\sum\limits_{s=0}^{d-1} (-1)^{r+s}\sigma_{r-1}(\sigma_s+x_j\sigma_{s-1})b_{d-1-r}b_{d-1-s}+\\
+\sum\limits_{r=0}^{d-1}\sum\limits_{s=0}^{d-1} (-1)^{r+s}(\sigma_r+x_j\sigma_{r-1})(\sigma_s+x_i\sigma_{s-1})\{b_{d-1-r},b_{d-1-s}\}=\\
=\sum\limits_{r=0}^{d-2}\sum\limits_{s=0}^{d-2} (-1)^{r+s}(x_i-x_j)\sigma_{r}\sigma_{s}b_{d-2-r}b_{d-2-s}-\\
-\sum\limits_{r=0}^{d-1}\sum\limits_{s=r}^{d-1} (-1)^{r+s}((\sigma_{r}+x_i\sigma_{r-1})(\sigma_{s}+x_j\sigma_{s-1})-(\sigma_{r}+x_j\sigma_{r-1})(\sigma_{s}+x_i\sigma_{s-1}))\{b_{d-1-r},b_{d-1-s}\}=\\
=\sum\limits_{r=0}^{d-2}\sum\limits_{s=0}^{d-2} (-1)^{r+s}(x_i-x_j)\sigma_{r}\sigma_{s}b_{d-2-r}b_{d-2-s}-\\
-\frac{1}{2}\sum\limits_{r=0}^{d-2}\sum\limits_{s=0}^{d-2} (-1)^{r+s}(x_i-x_j)\sigma_{r}\sigma_{s}(\{b_{d-1-r},b_{d-2-s}\}-\{b_{d-2-r},b_{d-1-s}\})=0.
\end{multline*}

\end{proof}

\begin{cor} $\{\cdot,\cdot\}=\{\cdot,\cdot\}_K$ on $\fZ_d$.
\end{cor}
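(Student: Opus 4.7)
The plan is to compare the two Poisson brackets in the \'etale coordinates $(x_i,y_j)$ on the dense open subset $U\subset\fZ_d$ where $A$ has simple spectrum, and then extend the resulting equality by regularity to all of $\fZ_d$.

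First I would specialize \propref{kuznecov} to the $\fsl_2$ setting considered here: there is a single color, so the indices $l,k$ collapse, the off-diagonal Cartan-type terms never occur, and the formulas reduce to $\{x_r,x_s\}_K=0=\{y_r,y_s\}_K$ and $\{x_r,y_s\}_K=\delta_{rs}y_s$ on $U$. Comparing with \propref{kuzn-sl2}, which asserts $\{x_i,x_j\}=0=\{y_i,y_j\}$ and $\{x_i,y_j\}=\delta_{ij}y_j$ for the Hamiltonian-reduction bracket, the two brackets agree on all pairs of the chosen \'etale generators. Since $(x_i,y_j)$ is an \'etale coordinate system on $U$, their differentials span $\Omega^1_U$ pointwise, and any regular bivector field is determined by its values on these pairs. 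This forces $\{\cdot,\cdot\}=\{\cdot,\cdot\}_K$ on $U$.

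Next I would extend this equality from $U$ to all of $\fZ_d$. Both brackets are globally defined on the affine scheme $\fZ_d$: the Hamiltonian-reduction bracket $\{\cdot,\cdot\}$ by its construction on $\BC[e_{ij},q_i,p_i]^{\fgl(V)}$, and the Kashiwara bracket $\{\cdot,\cdot\}_K$ by \corref{extend}. By \thmref{bij}(a), $\fZ_d$ is reduced and irreducible, so the nonempty open subscheme $U$ is dense. The difference $\{\cdot,\cdot\}-\{\cdot,\cdot\}_K$ is therefore a regular bivector field on $\fZ_d$ which vanishes on a dense open subset, hence vanishes identically.

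The only preliminary verification needed is that the coordinates $(x_i,y_j)$ in \propref{kuzn-sl2} coincide, up to a harmless normalization, with those of \propref{kuznecov}. This is essentially built in: in both cases the $x_i$ are the eigenvalues of $A$ (equivalently, the roots of its characteristic polynomial, recovered from the power sums $a_r$), and the $y_j$ are formed by the same Lagrange-interpolation combination of the invariants $b_s=\langle p,A^s q\rangle$ already identified in \lemref{len}. No serious obstacle arises; the corollary reduces to a direct comparison of the explicit formulas in \propref{kuznecov} and \propref{kuzn-sl2}, followed by this one-line density argument.
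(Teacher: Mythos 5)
Your proof is correct and is exactly the argument the paper intends: the corollary is stated immediately after \propref{kuzn-sl2} precisely so that it follows by comparing those formulas with \propref{kuznecov} (the coordinates $x_i,y_j$ are defined by literally the same expressions in both places) and then extending from the dense open locus of simple spectrum by irreducibility and regularity of both brackets. Nothing is missing.
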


\subsection{Classical Hamiltonian reduction. General case.}
We fix an $n$-tuple $\ul{d}=(d_1,\ldots,d_n)$ of nonnegative integers. Let $V_{\ul{d}}=\bigoplus\limits_{l=1,\dots,n}V_l=\bigoplus\limits_{l=1,\dots,n}\BC^{d_l}$ and $\fgl(V_{\ul{d}}):=\bigoplus\limits_{l=1,\dots,n}\fgl_{d_l}$.

To present Zastava spaces as Hamiltonian reduction, we ``triangulate'' the chainsaw quiver in the following way:

$$\xymatrix{
V_{l-1} \ar@(ur,ul)[]_{A_{l-1}} \ar[rdd]^{B_{l-1}} \ar[rr]_{q_{l-1}} &
& W_{l-1} \ar[ldd]_{p_l} & W_l \ar[rdd]_{p_{l+1}} &
W_{l+1} \ar[rr]_{p_{l+2}} && V_{l+2} \ar@(ur,ul)[]_{A'_{l+2}}       \\
&&&&&&\\
& V_l \ar@(dl,dr)[]_{A'_l} &
V_l \ar@(dl,dr)[]_{A_l} \ar[rr]^{B_l} \ar[ruu]_{q_l} &
& V_{l+1} \ar@(dl,dr)[]_{A'_{l+1}}
& V_{l+1} \ar@(dl,dr)[]_{A_{l+1}} \ar[ruu]^{B_{l+1}} \ar[luu]_{q_{l+1}} &
}$$

For a pair of vector spaces $V_l,V_{l+1}$ define the following $2$-step nilpotent Lie algebra: $$\fn(V_l,V_{l+1}):=V_l\oplus V_{l+1}^*\oplus (V_l\otimes V_{l+1}^*),$$ where the space $V_l\otimes V_{l+1}^*$ is central, $[V_l,V_l]=[V_{l+1}^*,V_{l+1}^*]=0$, and for $v\in V_l,\ w^\vee\in V_{l+1}^*$ one has $[v,w^\vee]=v\otimes w^\vee$.

To define the Poisson structure, we attach to each triangle of our graph the following Lie algebra
$$\fa_l:=(\fgl(V_l)\oplus \fgl(V_{l+1}))\rtimes \fn(V_l,V_{l+1})
$$ (the semidirect sum is with respect to the tautological action of $\fgl(V_l)$ on $V_l$ and $\fgl(V_{l+1})$ on $V_{l+1}^*$).

Consider the Lie algebra
$$\fa_{\ul{d}}:=\bigoplus\limits_{l\in\BZ/n\BZ}\fa_l=\bigoplus\limits_{l\in\BZ/n\BZ}(\fgl(V_l)\oplus \fgl(V_{l+1}))\rtimes \fn(V_l,V_{l+1})
$$

The coadjoint representation of $\fa_{\ul{d}}$ is the space $\fa_{\ul{d}}^*=\{(A_l,A_l',B_l,p_l,q_l)_{l\in\BZ/n\BZ}\}$, where
$$A_l\in\End(V_l),\quad A_l'\in\End(V_l),\quad B_l\in\Hom(V_l,V_{l+1}),\quad p_l\in V_l,\quad q_l\in V_l^*.$$
Let $\{e_{l,ij},e_{l,ij}'\}_{l\in\BZ/n\BZ,\ 1\le i,j\le d_l}$ be the coordinates on the $2$ copies of $\End(V_l)$,
$\{f_{l,ij}\}_{l\in\BZ/n\BZ,\ 1\le i\le d_l,\ 1\le j\le d_{l+1}}$ be the coordinates on $\Hom(V_l,V_{l+1})$,
$\{p_{l,i}\}_{l\in\BZ/n\BZ,\ 1\le i\le d_l}$ be the coordinates on $V_l$, $\{q_{l,i}\}_{l\in\BZ/n\BZ,\ 1\le i\le d_l}$ be the coordinates on $V_l^*$. Then the Lie--Poisson bracket on $\fa_{\ul{d}}^*$ reads
\begin{equation}
[e_{l,i_1j_1},e_{k,i_2j_2}]=\delta_{kl}(\delta_{i_2j_1}e_{l,i_1j_2}-\delta_{i_1j_2}e_{l,i_2j_1}),
\end{equation}

\begin{equation}
[e_{l,i_1j_1}',e_{k,i_2j_2}']=\delta_{kl}(\delta_{i_2j_1}e_{l,i_1j_2}'-\delta_{i_1j_2}e_{l,i_2j_1}'),
\end{equation}

\begin{equation}
[e_{l,i_1j_1},e_{k,i_2j_2}']=[e_{l,ij},p_{k,m}]=[e_{l,ij}',q_{k,m}]=0,
\end{equation}

\begin{equation}
[e_{l,ij},q_{k,m}]=\delta_{lk}\delta_{jm}q_{k,i},\quad
[e_{l,ij}',p_{k,m}]=-\delta_{lk}\delta_{mi}p_{k,j}.
\end{equation}

\begin{equation}
[q_{k,i},p_{l,j}]=\delta_{l,k+1}f_{k,ij}.
\end{equation}

Consider the subvariety $S_{\ul{d}}\subset\fa_{\ul{d}}^*$ defined by the following equations:
\begin{equation}\label{poisson-ideal}
B_lA_l+A_{l+1}'B_l+p_{l+1}q_l=0,\quad l\in\BZ/n\BZ.
\end{equation}

\begin{prop} $S_{\ul{d}}$ is $\ad^*(\fa_{\ul{d}})$-invariant (equivalently, the ideal generated by (\ref{poisson-ideal}) is a Poisson ideal).
\end{prop}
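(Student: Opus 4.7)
The plan is a direct Poisson-bracket verification, relying on two structural reductions. Since $\fa_{\ul{d}} = \bigoplus_{l\in\BZ/n\BZ} \fa_l$ is a direct sum of Lie algebras, $\BC[\fa_{\ul{d}}^*]$ decomposes as a tensor product of the Poisson algebras $\BC[\fa_l^*]$, with Poisson brackets across different summands vanishing. The defining function
$$F_l := B_l A_l + A'_{l+1} B_l + p_{l+1} q_l$$
only involves coordinates from $\fa_l^*$, so the problem reduces to showing that, for each $l$, the matrix entries $F_l^{(jm)}$ generate a Poisson ideal inside $\BC[\fa_l^*]$. By the Leibniz rule this amounts to computing $\{x, F_l^{(jm)}\}$ with $x$ ranging over the five families of generators $e_{l,ab}$, $e'_{l+1,ab}$, $f_{l,ab}$, $q_{l,c}$, $p_{l+1,c}$.

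Before carrying out the computation one needs a few Poisson brackets that are implicit in the Lie-algebra structure but not listed among equations (37)--(40). Applying Jacobi to $[e_l,[q_l,p_{l+1}]]$ and $[e'_{l+1},[q_l,p_{l+1}]]$ yields
$$\{e_{l,ab}, f_{l,cd}\} = \delta_{bc}\, f_{l,ad}, \qquad \{e'_{l+1,ab}, f_{l,cd}\} = -\delta_{ad}\, f_{l,cb};$$
centrality of $V_l \otimes V_{l+1}^*$ inside the Heisenberg $\fn(V_l, V_{l+1})$ gives $\{q_l, f_l\} = \{p_{l+1}, f_l\} = 0$. Armed with these, direct expansion of the five Leibniz sums produces
$$\{e_{l,ab}, F_l^{(jm)}\} = \delta_{jb}\, F_l^{(am)}, \qquad \{e'_{l+1,ab}, F_l^{(jm)}\} = -\delta_{ma}\, F_l^{(jb)},$$
while each of $\{q_{l,c}, F_l^{(jm)}\}$, $\{p_{l+1,c}, F_l^{(jm)}\}$, $\{f_{l,cd}, F_l^{(jm)}\}$ vanishes identically.

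Conceptually, the first two identities express the $\GL(V_l)\times \GL(V_{l+1})$-equivariance of $F_l$ viewed as a $\Hom(V_l, V_{l+1})$-valued map on $\fa_l^*$, while the vanishing of the other three reflects that $F_l$ Poisson-commutes with the entire Heisenberg $\fn(V_l,V_{l+1})$. In every case the output lies in the ideal generated by the entries of $F_l$, proving this ideal is Poisson and hence that $S_{\ul{d}}$ is $\ad^*(\fa_{\ul{d}})$-invariant. There is no real obstacle; the only point requiring attention is careful index-tracking through the five-term Leibniz expansions, where in the $\{e_l, F_l\}$ (resp. $\{e'_{l+1}, F_l\}$) case a cross-term like $e_{l,jb}f_{l,am}$ cancels against its opposite-sign partner leaving precisely a $\delta$-multiple of $F_l$, and in the vanishing cases the contribution of the rank-one term $p_{l+1}q_l$ exactly cancels a contribution from one of the matrix-product summands.
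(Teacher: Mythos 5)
Your verification is correct and is precisely the computation the paper dismisses as ``Straightforward'': the reduction to a single $\fa_l$, the recovery of $\{e_{l},f_{l}\}$ and $\{e'_{l+1},f_{l}\}$ from Jacobi applied to $f_{l}=[q_{l},p_{l+1}]$, and the equivariance/cancellation pattern all check out (e.g.\ $F_l\mapsto -F_lX$ under $\fgl(V_l)$ and $\delta(B_lA_l)=\pm B_l v q_l$ cancelling $\delta(p_{l+1}q_l)=\mp B_l v q_l$ under $v\in V_l$). The only cosmetic slip is in your last sentence: for $\{f_{l,cd},F_l\}$ the rank-one term contributes nothing (as $f_l$ is central in $\fn$) and the cancellation is between the two matrix-product summands, not between one of them and $p_{l+1}q_l$.
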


\begin{proof}
Straightforward.
\end{proof}

Let $\fgl(V_l)_{\diag}$ be the diagonal $\fgl(V_l)$ inside $\fgl(V_l)\oplus\fgl(V_l)\subset\fa_{\ul{d}}$ and $\pi:\fa_{\ul{d}}^*\to\fgl(V_l)^*_{\diag}$ be the projection. Then the Drinfeld Zastava space $\fZ_{\ul{d}}$ is identified with the Hamiltonian reduction $S_{\ul{d}}//\bigoplus\limits_{l\in\BZ/n\BZ}\fgl(V_l)_{\diag}=\pi^{-1}(0)/GL(V_l)_{\diag}$. This provides a natural Poisson bracket on $\fZ_{\ul{d}}$.

We consider the following polynomial invariants $$a_{l,r}:=\Tr A_l^r=\sum\limits_{i_1,\ldots,i_r} e_{l,i_1i_2}e_{l,i_2i_3}\ldots e_{l,i_ri_1},\ r=1,\ldots, d_l,\ l\in\BZ/n\BZ;$$ $$b_{l,s}:=\langle q_l, A_l^sp_l\rangle=\sum\limits_{i_1,\ldots,i_{s+1}} p_{l,i_1}e_{l,i_1i_2}e_{l,i_2i_3}\ldots e_{l,i_si_{s+1}}q_{l,i_{s+1}},\ s=0,\ldots,d_l-1,\ l\in\BZ/n\BZ.$$

We also introduce the following elements:

\begin{equation}\label{defn-poisson-bkl}
b_{kl;s_k,\ldots,s_l}:=\langle q_l, A_l^{s_l}\prod\limits_{m=k}^{l-1}B_mA_m^{s_m}p_k\rangle,\quad k\le l,\ s_m\in\BZ_{\ge0}.
\end{equation}

\begin{lem} Let $1\le k < l+1\le n-1$. Then $\{b_{kl;s_k,\ldots,s_l},b_{l+1,r}\}=b_{k,l+1;s_k,\ldots,s_l,r}$.
\end{lem}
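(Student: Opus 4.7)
The plan is to reduce to a direct Kirillov–Kostant computation on $\fa_{\ul d}^*$, exploiting the direct-sum decomposition $\fa_{\ul d}=\bigoplus_l\fa_l$ so that most pairs of generators in the two factors simply Poisson-commute. First I would locate every coordinate entering the two functions in the appropriate summand: $A_m=e_m\in\fa_m$, $B_m=f_m\in\fa_m$ (the central piece of $\fn(V_m,V_{m+1})$), $q_m\in\fa_m$ (from $V_m\subset\fn(V_m,V_{m+1})$), and $p_{m+1}\in\fa_m$ (from $V_{m+1}^*\subset\fn(V_m,V_{m+1})$). Consequently $b_{kl;s_k,\ldots,s_l}$ is polynomial in generators drawn from $\fa_{k-1}\oplus\fa_k\oplus\cdots\oplus\fa_l$ (namely $p_k,A_k,B_k,\ldots,B_{l-1},A_l,q_l$), while $b_{l+1,r}$ is polynomial in generators drawn from $\fa_l\oplus\fa_{l+1}$ (namely $p_{l+1},A_{l+1},q_{l+1}$). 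The two index sets overlap in $\fa_l$, where the $b_{kl}$-side contributes $q_l,A_l$ and the $b_{l+1,r}$-side contributes $p_{l+1}$.

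Next I would tabulate the possible non-zero cross-brackets. Using relations (1)–(4) in the paper, $\{A_l,p_{l+1}\}=0$ (relation $[e_{l,ij},p_{k,m}]=0$), and the only surviving cross-bracket is $\{q_{l,i},p_{l+1,j}\}=f_{l,ij}=(B_l)_{ij}$. All other pairings between the $\fa_k,\ldots,\fa_{l-1}$ generators of $b_{kl}$ and the $\fa_{l+1}$ generators of $b_{l+1,r}$ vanish because they lie in disjoint direct summands of $\fa_{\ul d}$, and within $\fa_l$ the above is the unique non-commuting pair.

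I would then apply Leibniz. Set $M:=A_l^{s_l}B_{l-1}A_{l-1}^{s_{l-1}}\cdots B_kA_k^{s_k}p_k\in V_l$ and $N:=q_{l+1}A_{l+1}^r\in V_{l+1}^*$, so that $b_{kl;s_k,\ldots,s_l}=\sum_i q_{l,i}M_i$ and $b_{l+1,r}=\sum_j N_jp_{l+1,j}$. Since $\{M_i,p_{l+1,j}\}=\{q_{l,i},N_j\}=\{M_i,N_j\}=0$, only the $\{q_{l,i},p_{l+1,j}\}$-term survives:
$$\{b_{kl;s_k,\ldots,s_l},b_{l+1,r}\}\;=\;\sum_{i,j}M_iN_j(B_l)_{ij}\;=\;N\cdot B_l\cdot M\;=\;\langle q_{l+1},\,A_{l+1}^rB_lA_l^{s_l}B_{l-1}\cdots B_kA_k^{s_k}p_k\rangle,$$
which matches the definition of $b_{k,l+1;s_k,\ldots,s_l,r}$ exactly.

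I do not expect any real obstacle. The calculation descends unambiguously to the Hamiltonian reduction $\fZ_{\ul d}=S_{\ul d}//\prod_l GL(V_l)_{\diag}$ because both $b_{kl;s_k,\ldots,s_l}$ and $b_{l+1,r}$ are $\prod_l GL(V_l)_{\diag}$-invariant, the ideal of $S_{\ul d}$ is Poisson (already established in the Proposition above), and the answer $b_{k,l+1;\ldots}$ is itself invariant. The only point requiring care is bookkeeping: one must verify the order of the matrix product $A_{l+1}^rB_lA_l^{s_l}\cdots A_k^{s_k}$ agrees with the prescribed convention $A_{l+1}^{s_{l+1}}\prod_{m=k}^{l}B_mA_m^{s_m}$ for the increased-length invariant, which is immediate from $p_k\in V_k$ once one reads the product right-to-left.
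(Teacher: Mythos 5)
Your computation is correct and is exactly the direct verification the paper has in mind (its proof of this lemma is literally the word ``Straightforward''): you correctly locate each generator in its summand $\fa_m$ of $\fa_{\ul d}=\bigoplus_m\fa_m$, observe that the only non-vanishing cross-bracket is $\{q_{l,i},p_{l+1,j}\}=f_{l,ij}$, and the single Leibniz term $\sum_{i,j}M_iN_jf_{l,ij}=\langle q_{l+1},A_{l+1}^rB_lA_l^{s_l}\cdots B_kA_k^{s_k}p_k\rangle$ is precisely $b_{k,l+1;s_k,\ldots,s_l,r}$. The remarks on invariance and descent through the Hamiltonian reduction are also in order, so there is nothing to add.
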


\begin{proof}
Straightforward.
\end{proof}

\begin{prop}\label{poisson-generators} For $d_0=0$ the coordinate ring of $\fZ_{\ul{d}}$ is generated (as a Poisson algebra) by $a_{l,r},b_{l,s}$ with $l\in\BZ/n\BZ,\ r=1,\ldots, d_l,\ s=0,\ldots,d_l-1$.
\end{prop}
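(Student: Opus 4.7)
The plan is to combine three ingredients: invariant theory for framed quivers (to identify commutative-algebra generators of $\BC[\fZ_{\ul d}]$), Cayley--Hamilton (to cut the index range down to the claimed one), and the preceding lemma (to re-express the ``long'' framed-path generators as iterated Poisson brackets of the ``short'' ones).

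First, I would invoke $\BC[\fZ_{\ul d}]=\BC[\mu^{-1}(0)^\Gamma_{\ul d}]^{G_{\ul d}}$ from Section~\ref{Queer}. Since $d_0=0$ forces $V_0=0$, the chainsaw quiver of Section~\ref{queer} degenerates to a linear handsaw-type quiver with vertices $V_1,\ldots,V_{n-1}$, self-loops $A_l$, forward arrows $B_l:V_l\to V_{l+1}$ (for $1\le l\le n-2$), and one-dimensional framings $W_{l-1}\xrightarrow{p_l}V_l\xrightarrow{q_l}W_l$. By the Le Bruyn--Procesi description of invariants of framed quiver representations, $\BC[\mu^{-1}(0)^\Gamma_{\ul d}]^{G_{\ul d}}$ is generated as a commutative algebra by traces along oriented cycles and by $p$-to-$q$ composed paths. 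In our acyclic-with-self-loops quiver the only cycles are the self-loops $A_l^r$, yielding $a_{l,r}=\Tr(A_l^r)$, while the $p$-to-$q$ composed paths are precisely the scalars $b_{kl;s_k,\ldots,s_l}$ of~(\ref{defn-poisson-bkl}) for $1\le k\le l\le n-1$ and $s_m\ge 0$.

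Second, Cayley--Hamilton applied to each $A_l$ truncates the index range. Writing $A_l^{d_l}$ as a polynomial in $I,A_l,\ldots,A_l^{d_l-1}$ with coefficients polynomial in $a_{l,1},\ldots,a_{l,d_l}$ shows, by descending induction, that any $a_{l,r}$ with $r>d_l$ is a polynomial in $a_{l,1},\ldots,a_{l,d_l}$, and that any $b_{kl;s_k,\ldots,s_l}$ with some $s_m\ge d_m$ reduces to a polynomial in the $a_{m,\bullet}$ and in $b_{kl;\ldots}$ with all $s_m\le d_m-1$. So, as a commutative algebra, $\BC[\fZ_{\ul d}]$ is generated by $a_{l,r}$ ($1\le r\le d_l$) together with $b_{kl;s_k,\ldots,s_l}$ for $1\le k\le l\le n-1$, $0\le s_m\le d_m-1$.

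Third comes the Poisson generation: I would prove by induction on $l-k\ge 0$ that every such $b_{kl;s_k,\ldots,s_l}$ lies in the Poisson subalgebra generated by the $a_{l,r}$ and $b_{l,s}$. The base case $l=k$ is tautological, since $b_{k,k;s_k}=b_{k,s_k}$. For the induction step, the lemma immediately preceding the proposition gives
$$
\{b_{k,l-1;s_k,\ldots,s_{l-1}},\,b_{l,s_l}\}=b_{k,l;s_k,\ldots,s_{l-1},s_l},
$$
so the right-hand side lies in the Poisson span by the induction hypothesis. Combined with the first two steps, this proves the proposition. The main expected obstacle is the first step: a clean invocation of invariant theory for the framed chainsaw quiver once $V_0=0$ makes it acyclic up to self-loops. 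After that, the Cayley--Hamilton truncation and the inductive application of the preceding lemma are routine.
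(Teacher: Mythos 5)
Your proof is correct and follows essentially the same route as the paper's: classical invariant theory identifies the commutative generators $a_{l,r}$ and $b_{kl;s_k,\ldots,s_l}$, and the preceding lemma $\{b_{kl;s_k,\ldots,s_l},b_{l+1,r}\}=b_{k,l+1;s_k,\ldots,s_l,r}$ produces the long generators by induction on $l-k$. The only (harmless) differences are that you use the lemma with arbitrary last exponent and thereby bypass the paper's intermediate reduction of $b_{kl;s_k,\ldots,s_l}$ to $b_{kl;s_k,0,\ldots,0}$ via the relation~(\ref{poisson-ideal}), and that you make explicit the Cayley--Hamilton truncation of the index ranges, which the paper leaves implicit.
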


\begin{proof}
According to classical invariant theory, the coordinate ring of $\fZ_{\ul{d}}$ is generated by the elements $a_{l,r}$ and $b_{kl;s_k,\ldots,s_l}$.
Due to the relation~(\ref{poisson-ideal}), one can express $b_{kl;s_k,\ldots,s_l}$ via the sum of products of $a_{l,r}$'s, $b_{l,s}$'s and $b_{kl;s_k,0,\ldots,0}$'s. Now it remains to note that $b_{kl;s_k,0,\ldots,0}=\{\{\ldots\{b_{k,s_k}b_{k+1,0}\}\ldots,b_{l-1,0}\},b_{l,0}\}$.
\end{proof}

\begin{rem}
This is not the case when all of the $d_l$'s are nonzero. There are additional generators of the form $\Tr (\prod\limits_{m=0}^{l-1}B_m)^r$ in general.
\end{rem}

Let $c_{kl}$ be the coefficients of the Cartan matrix (i.e. $c_{kk}=2,\ c_{k,k+1}=c_{k+1,k}=-1,\ c_{kl}=0$ for $|k-l|>1$.)

\begin{prop}\label{prop-poisson-relations-general} For $n\ge3$ the following holds:
\begin{eqnarray}
\{a_{k,r},a_{l,s}\}=0;\\
\{a_{k,r},b_{l,s}\}=\delta_{kl}rb_{l,r+s-1};\\
\{b_{k,r+1},b_{l,s}\}-\{b_{k,r},b_{l,s+1}\}=c_{kl}b_{k,r}b_{l,s};\\
\{b_{k,r_2},\{b_{k,r_1},b_{l,s}\}\}+\{b_{k,r_1},\{b_{k,r_2},b_{l,s}\}\}=0\quad\text{for}\ |k-l|=1.
\end{eqnarray}
\end{prop}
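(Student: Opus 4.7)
The plan is to verify each of the four identities by case analysis on the relative position of the vertices $k, l$ on the affine Dynkin diagram. For $n \geq 3$ the regimes $k = l$, $|k-l|=1$, and $|k - l| \geq 2$ are all distinct, with no cyclic wrap-around to worry about.

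For same-vertex ($k = l$) relations, the generators $a_{k,r}, b_{k,s}$ depend only on the coordinates $e_{k,ij}, p_{k,i}, q_{k,i}$, whose Lie--Poisson structure on $(\fgl(V_k) \ltimes V_k) \oplus (\fgl(V_k) \ltimes V_k^*)$, after Hamiltonian reduction on the diagonal $\fgl(V_k)$, reproduces the setup of the $\fsl_2$ case. Identities (1) and (2) with $k = l$ follow directly from \propref{prop-poisson-relations}. For (3) with $k = l$, a telescoping of \propref{prop-poisson-relations}(3) gives
\[
\{b_{k,r+1}, b_{k,s}\} - \{b_{k,r}, b_{k,s+1}\} = \sum_{m=s}^{r} b_{k,m} b_{k,r+s-m} - \sum_{m=s+1}^{r-1} b_{k,m} b_{k,r+s-m} = 2\, b_{k,r} b_{k,s},
\]
matching $c_{kk} = 2$. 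For distant-vertex ($|k - l| \geq 2$) relations, the only cross-factor Lie bracket in $\fa_{\ul d}$ is $[q_j, p_{j+1}] = f_j$; since $a_{k,r}$ and $b_{k,s}$ involve only coordinates at node $k$, every Poisson bracket between generators at nodes $k$ and $l$ with $|k-l|\geq 2$ vanishes identically, matching the vanishing right-hand sides (with $c_{kl} = 0$).

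The substantive case is the adjacent-vertex case $l = k+1$ (the case $l = k-1$ is symmetric). A single Leibniz expansion, using only the cross-bracket $\{(q_k)_j, (p_{k+1})_m\} = f_{k,jm}$ and identifying $f_{k,jm}$ with the matrix entry of $B_k$, yields in suitable matrix conventions
\[
\{b_{k,r}, b_{k+1,s}\} = p_k^{\top} A_k^{r}\, B_k\, A_{k+1}^{s}\, q_{k+1}.
\]
The desired difference then reads
\[
\{b_{k,r+1}, b_{k+1,s}\} - \{b_{k,r}, b_{k+1,s+1}\} = p_k^{\top} A_k^{r}\, \bigl(A_k B_k - B_k A_{k+1}\bigr)\, A_{k+1}^{s}\, q_{k+1}.
\]
The Hamiltonian reduction relation from the Poisson ideal (\ref{poisson-ideal}), after substituting $A_l' = -A_l$ on the zero level of the moment map, collapses the commutator $A_k B_k - B_k A_{k+1}$ to the rank-one tensor $-p_{k+1} q_k^{\top}$, so that the whole sandwich factors as $-(p_k^{\top} A_k^{r} q_k)(p_{k+1}^{\top} A_{k+1}^{s} q_{k+1}) = -b_{k,r}\, b_{k+1,s} = c_{k,k+1}\, b_{k,r}\, b_{k+1,s}$.

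For the Serre-type identity (4) with $l = k+1$, I would start from the above closed expression for $\{b_{k,r_i}, b_{k+1,s}\}$ and compute the outer bracket with $b_{k,r_2}$. This outer bracket picks up two contributions: the Leibniz action of $b_{k,r_2}$ on the node-$k$ prefix $p_k^{\top} A_k^{r_1}$, governed by the $\fsl_2$ relations of \propref{prop-poisson-relations}; and the internal bracket $\{A_k, B_k\}$ inside $\fa_k$, which threads $b_{k,r_2}$ through the middle factor $B_k$. Symmetrizing in $r_1 \leftrightarrow r_2$, these contributions pair up with opposite signs and cancel; the cancellation ultimately reflects the commutativity $\{a_{k,m}, a_{k,m'}\} = 0$ established in (1). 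The main obstacle here is the bookkeeping of signs and of the index shifts produced by the constraint at each step of the nested computation; once the contributions are fully enumerated, the vanishing is the Poisson analogue of the quantum Serre relation and is essentially automatic.
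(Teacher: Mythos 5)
Your treatment of the first three relations is correct and follows the same route as the paper: the same-node cases reduce to \propref{prop-poisson-relations}; the distant-node cases are immediate because the only cross-node bracket is $\{q_{k,i},p_{k+1,j}\}=f_{k,ij}$; and for adjacent nodes the single Leibniz term $\{b_{k,r},b_{k+1,s}\}=\langle q_{k+1},A_{k+1}^{s}B_kA_k^{r}p_k\rangle$ combined with the constraint $B_kA_k+A_{k+1}'B_k+p_{k+1}q_k=0$ and with $A'=-A$ on the zero level of the moment map collapses the difference to $-b_{k,r}b_{k+1,s}$, exactly as in the paper's proof. One slip worth fixing: the quiver is cyclic ($l\in\BZ/n\BZ$), so the Cartan matrix is the affine one and there \emph{is} wrap-around --- the nodes $0$ and $n-1$ are adjacent with $c_{0,n-1}=-1$, and for $n=3$ every pair of distinct nodes is adjacent. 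This does not damage your computation, since the adjacent case is worked out for a generic edge $k\to k+1$ of the cycle, but the claim that $|k-l|\ge 2$ has its naive linear meaning should go; the hypothesis $n\ge 3$ is there only to exclude the doubled bond of $\widehat{\fsl}_2$.

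The genuine gap is in relation (4). You assert that after symmetrizing in $r_1\leftrightarrow r_2$ the contributions ``pair up with opposite signs and cancel,'' that this is ``essentially automatic,'' and that the cancellation reflects $\{a_{k,m},a_{k,m'}\}=0$. None of this is substantiated, and the last claim points at the wrong mechanism: the double bracket is not symmetric in $(r_1,r_2)$ term by term, so there is no a priori reason for the symmetrized expression to vanish. What actually kills it is the identity $\{b_{k,r},b_{k,k+1;r',s}\}=\sum_{t=r'}^{r-1}b_{k,t}\,b_{k,k+1;r+r'-t-1,s}$, which is antisymmetric under $r\leftrightarrow r'$ (with the usual convention for reversed summation ranges); evaluating it at $(r,r')=(r_2,r_1)$ and $(r_1,r_2)$ makes the two double brackets cancel termwise. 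This identity is the precise analogue of the third relation of \propref{prop-poisson-relations}: the covector $\tilde q:=q_{k+1}A_{k+1}^{s}B_k\in V_k^*$ transforms under $e_{k,ij}$ by the same rule as $q_k$ (check $[e_{k,ij},f_{k,ab}]=\delta_{ja}f_{k,ib}$ via Jacobi from $[q_{k,a},p_{k+1,b}]=f_{k,ab}$) and Poisson-commutes with $p_k$ and $q_k$, so the $\fsl_2$ computation of $\{b_r,b_{r'}\}$ goes through verbatim with $q_k$ replaced by $\tilde q$. Until you state and verify some version of this formula --- or otherwise exhibit the cancellation explicitly --- your argument for (4) is a plan rather than a proof.
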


\begin{proof}
The first two relations, as well as third one for $k=l$, follow immediately from Proposition~\ref{prop-poisson-relations}. Let us prove the third relation for $l=k+1$. We have
\begin{multline*}
\{b_{k,r+1},b_{l,s}\}-\{b_{k,r},b_{l,s+1}\}=\langle q_l, A_l^{s}B_kA_k^{r+1}p_k\rangle-\langle q_l, A_l^{s+1}B_kA_k^{r}p_k\rangle=\\=(-1)^s\langle q_l, A_l'^{s}B_kA_k^{r+1}p_k\rangle-(-1)^{s+1}\langle q_l, A_l'^{s+1}B_kA_k^{r}p_k\rangle=\\
=(-1)^{s+1}\langle q_l, A_l'^{s}p_{l}q_kA_k^{r}p_k\rangle=-\langle q_l, A_l^{s}p_{l}\rangle\langle q_k,A_k^{r}p_k\rangle=-b_{k,r}b_{l,s}.
\end{multline*}

Now let us prove the last relation. Assume that $l-k=1,\ r_1\le r_2$. We have
\begin{multline*}
\{b_{k,r_2},\{b_{k,r_1},b_{l,s}\}\}+\{b_{k,r_1},\{b_{k,r_2},b_{l,s}\}\}=\{b_{k,r_2},b_{kl;r_1,s}\}+\{b_{k,r_1},b_{kl;r_2,s}\}=\\
=\sum\limits_{t=r_1}^{r_2-1}b_{k,t}b_{kl;r_1+r_2-t-1,s}-\sum\limits_{t=r_1}^{r_2-1}b_{k,t}b_{kl;r_1+r_2-t-1,s}=0.
\end{multline*}

\end{proof}

Let $x_{l,i}, y_{l,i}$, where $l\in\BZ/n\BZ,\ 1\le i\le d_l$, be the following \'etale coordinates on $\fZ_d$:

\begin{eqnarray}
a_{l,r}=\sum\limits_{i=1}^{d_l} x_{l,i}^r,\quad y_{l,i}=\sum\limits_{r=0}^{d_l-1} (-1)^{r}\sigma_r(x_{l,1},\ldots, \widehat{x_{l,i}},\ldots, x_{l,d_l})b_{l,d_l-1-r},
\end{eqnarray}
where $\sigma_r$ stands for the elementary symmetric function of degree $r$.

\begin{prop}\label{kuzn-general}
We have \begin{eqnarray}\{x_{k,i},x_{l,j}\}=0,\\ \{x_{k,i},y_{l,j}\}=\delta_{kl}\delta_{ij}y_j,\\ \{y_{k,i},y_{l,j}\}=\frac{(2\delta_{kl}-c_{kl})y_{k,i}y_{l,j}}{x_{k,i}-x_{l,j}}.\end{eqnarray}
\end{prop}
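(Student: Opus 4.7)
The plan is to split the verification according to the relative position of $k$ and $l$ on the affine Dynkin diagram, reducing each case either to the $\fsl_2$ calculation of Proposition~\ref{kuzn-sl2} applied blockwise, or to a direct residue computation built on the Poisson identities of Proposition~\ref{prop-poisson-relations-general}.

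The easy cases are essentially immediate. The identity $\{x_{k,i},x_{l,j}\}=0$ follows from $\{a_{k,r},a_{l,s}\}=0$ because every $x_{l,i}$ is an algebraic function of the $a_{l,\bullet}$ alone. For $\{x_{k,i},y_{l,j}\}$ with $k\ne l$, the vanishing $\{a_{k,r},b_{l,s}\}=0$ forces $\{x_{k,i},y_{l,j}\}=0$, matching the $\delta_{kl}$; when $k=l$, the Poisson subalgebra of $\BC[\fZ_{\ul{d}}]$ generated by $a_{l,\bullet},b_{l,\bullet}$ satisfies exactly the $\fsl_2$ relations of Proposition~\ref{prop-poisson-relations}, so Proposition~\ref{kuzn-sl2} applied verbatim inside block $l$ yields $\{x_{l,i},y_{l,j}\}=\delta_{ij}y_{l,j}$. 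By the same blockwise reduction, the diagonal case $k=l$ of the third identity gives $\{y_{l,i},y_{l,j}\}=0$, consistent with $2\delta_{ll}-c_{ll}=0$; and in the far off-diagonal case $|k-l|>1$ the summands $\fa_k,\fa_l\subset\fa_{\ul{d}}$ commute (no $f$-variable links them), so $\{b_{k,r},b_{l,s}\}=0$ and $\{y_{k,i},y_{l,j}\}=0$, matching $c_{kl}=0$. The case $n=2$ with $|k-l|=1$ must be handled separately, as in Example~\ref{hatsl2}, because the indices are then doubly adjacent.

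The substantive case is $|k-l|=1$, say $l=k+1$, and I would attack it via generating series. Set $B_l(w):=\sum_{s\ge 0}b_{l,s}w^{-s-1}$; by Lagrange interpolation this equals $Q_l(w)/P_l(w)$ with $P_l(w)=\prod_m(w-x_{l,m})$, so that $\tilde y_{l,j}:=\mathrm{Res}_{w=x_{l,j}}B_l(w)=y_{l,j}/\prod_{n\ne j}(x_{l,j}-x_{l,n})$. Using the reduced moment-map relation $A_lB_k-B_kA_k=p_lq_k$ coming from (\ref{poisson-ideal}), the bracket $\{b_{k,r},b_{l,s}\}=b_{kl;r,s}=\langle q_l,A_l^sB_kA_k^rp_k\rangle$ satisfies the telescoping recursion $b_{kl;r+1,s}-b_{kl;r,s+1}=-b_{k,r}b_{l,s}$, which packages into the closed identity
\begin{equation*}
(z-w)\{B_k(z),B_l(w)\}=C(w)-C(z)+B_k(z)\bigl(B_l(z)-B_l(w)\bigr),
\end{equation*}
with $C(z):=\langle q_l,(z-A_l)^{-1}B_kp_k\rangle$ singular only at eigenvalues of $A_l$. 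Applying $\mathrm{Res}_{z=x_{k,i}}\mathrm{Res}_{w=x_{l,j}}$ annihilates the $C$ terms and the $B_k(z)B_l(z)$ piece on singularity grounds and leaves a single surviving contribution of the form $\pm\tilde y_{k,i}\tilde y_{l,j}/(x_{k,i}-x_{l,j})$. Multiplying by the Vandermonde prefactors $\prod_{m\ne i}(x_{k,i}-x_{k,m})$ and $\prod_{n\ne j}(x_{l,j}-x_{l,n})$, which Poisson-commute with everything in the opposite block by the vanishing statements above, produces the desired formula.

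The main obstacle is this last residue extraction: one must justify rigorously that the Poisson bracket commutes with residue extraction at the variable poles $x_{k,i},x_{l,j}$, and then carefully track signs to match $2\delta_{kl}-c_{kl}=-c_{kl}$. A perhaps cleaner alternative is to work on the transversal slice where each $A_m$ is diagonalized: there $\tilde y_{l,j}=(p_l)_j(q_l)_j$, the constraint forces $(B_k)_{ji}=(p_l)_j(q_k)_i/(x_{l,j}-x_{k,i})$, and the factor $(x_{k,i}-x_{l,j})^{-1}$ appears directly from the only nonvanishing cross-block Lie--Poisson relation $[q_{k,i},p_{l,j}]=\delta_{l,k+1}f_{k,ij}$ on $\fa_{\ul{d}}^*$.
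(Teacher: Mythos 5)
Your proposal is correct, and for the only substantive case $|k-l|=1$ your primary route (generating series plus residue extraction) is genuinely different from the paper's, while the ``cleaner alternative'' you sketch at the end is essentially the paper's actual proof. The paper restricts to the $\ft_{\ul{d}}$-invariant slice $S=\ft_{\ul{d}}^*\oplus\bigoplus_l\fn(V_l,V_{l+1})^*$ (on which every invariant function is determined), writes $y_{k,i}|_S=\prod_{m\ne i}(x_{k,i}-x_{k,m})p_{k,i}q_{k,i}$, computes $\{y_{k,i},y_{l,j}\}$ as $\ad(dy_{k,i})(y_{l,j})$ using the single surviving cross-block bracket $[q_{k,i},p_{l,j}]=\delta_{l,k+1}f_{k,ij}$, and then eliminates $f_{k,ij}$ via the constraint~(\ref{poisson-ideal}) --- exactly your second option. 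What your residue argument buys is a computation carried out entirely inside the Poisson algebra generated by the $a_{l,r},b_{l,s}$, using only Proposition~\ref{prop-poisson-relations-general} and the partial-fraction identity for $b_l(u)$ from Lemma~\ref{len}, with no appeal to the slice; the justification you flag does go through, precisely because $\{b_{k,r},a_{l,s}\}=0$ for $k\ne l$ kills both the variation of the residue contour and the potential double pole coming from bracketing against $1/P_l(w)$, so $\{B_k(z),B_l(w)\}$ has only simple poles in each variable and ``residue of the bracket'' equals ``bracket of the residues.'' Two further remarks: (i) your observation that $n=2$ requires separate treatment (the two blocks are linked by two $f$-variables, matching $c_{01}=-2$) is correct and is in fact glossed over by the paper, whose supporting Proposition~\ref{prop-poisson-relations-general} is only stated for $n\ge3$; (ii) on signs, your telescoping identity $b_{kl;r+1,s}-b_{kl;r,s+1}=-b_{k,r}b_{l,s}$ produces the coefficient $c_{kl}$ rather than $2\delta_{kl}-c_{kl}=-c_{kl}$ for $l=k+1$, which agrees with Proposition~\ref{kuznecov} but not with the statement as printed --- the paper itself carries this sign discrepancy between Propositions~\ref{kuznecov} and~\ref{kuzn-general}, so it is not an error introduced by your argument.
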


\begin{proof}
The first two relations follow immediately from Proposition~\ref{kuzn-sl2}. Let us prove the last one.

Denote by $\ft_{\ul{d}}$ the Cartan subalgebra of $\fgl(V_{\ul{d}})$. Consider the $\ft_{\ul{d}}$-invariant subspace $S:=\ft_{\ul{d}}^*\oplus \bigoplus\limits_{l\in\BZ/n\BZ} \fn(V_l,V_{l+1})^*\subset\fa_{\ul{d}}^*$. Note that each $GL(V_{\ul{d}})$-invariant function on $\fa_{\ul{d}}^*$ is uniquely determined by its restriction to this subspace. We have $$
y_{k,i}|_S=\prod\limits_{m\ne i}(x_{k,i}-x_{k,m})p_{k,i}q_{k,i}\ \text{and}\ dy_{k,i}|_S=\prod\limits_{m\ne i}(x_{k,i}-x_{k,m})p_{k,i}dq_{k,i}+\omega,
$$ where $\omega$ has the form $\sum\limits_{i,j} F_{ij}(x_{k,m},p_{k,m},q_{k,m})de_{ij}+\sum\limits_{i} F_{i}(x_{k,m},p_{k,m},q_{k,m})dp_{k,i}$.
Hence for $l=k+1$ we have (since $\ad(\omega)$ centralizes $y_{l,j}$)
\begin{multline*}\{y_{k,i},y_{l,j}\}|_S=\ad(dy_{k,i})(y_{l,j})|_S=\prod\limits_{m\ne i}(x_{k,i}-x_{k,m})p_{k,i}\ad(q_{k,i})(y_{l,j})|_S=\\
=\prod\limits_{m\ne i}(x_{k,i}-x_{k,m})p_{k,i}f_{k,ij}\prod\limits_{m\ne j}(x_{l,j}-x_{l,m})q_{l,j}|_S.
\end{multline*}
According to~(\ref{poisson-ideal}), the latter is
$$\prod\limits_{m\ne i}(x_{k,i}-x_{k,m})p_{k,i}\frac{q_{k,i}p_{l,j}}{x_{k,i}-x_{l,j}}\prod\limits_{m\ne j}(x_{l,j}-x_{l,m})q_{l,j}|_S=\frac{y_{k,i}y_{l,j}}{x_{k,i}-x_{l,j}}|_S.
$$
\end{proof}

\begin{cor} $\{\cdot,\cdot\}=\{\cdot,\cdot\}_K$ on $\fZ_{\ul{d}}$.
\end{cor}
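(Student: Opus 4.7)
The plan is to compare the two brackets in the \'etale coordinates $(x_{l,i}, y_{l,i})_{l\in\BZ/n\BZ,\ 1\le i\le d_l}$ on the Zariski-dense open subset $U\subset\fZ_{\ul{d}}$ parametrizing the $G_{\ul{d}}$-orbits of quadruples for which every $A_l$ has simple, pairwise disjoint spectrum. Density and nonemptiness of $U$ are immediate from the generic-fiber analysis of $\Upsilon$ in the proof of Proposition~\ref{wilson}, and the coordinates $(x_{l,i}, y_{l,i})$ are literally the same system used in both Proposition~\ref{kuznecov} and Proposition~\ref{kuzn-general}.

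On $U$, Proposition~\ref{kuzn-general} pins down the Hamiltonian reduction bracket $\{\cdot,\cdot\}$, while Proposition~\ref{kuznecov} pins down the Kashiwara bracket $\{\cdot,\cdot\}_K$, and the two sets of formulas agree term by term. The $\{x,x\}$ and $\{x,y\}$ brackets match on the nose. For the $\{y,y\}$ bracket one checks that the coefficient $2\delta_{kl}-c_{kl}$ in Proposition~\ref{kuzn-general} vanishes for $k=l$ and equals $-c_{kl}$ for $k\ne l$, which, after relabeling indices and accounting for the sign coming from $x_{l,r}-x_{k,s}$ versus $x_{k,s}-x_{l,r}$, matches the coefficient $c_{lk}$ of Proposition~\ref{kuznecov} under the symmetry of the affine Cartan matrix. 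Hence $\{\cdot,\cdot\}=\{\cdot,\cdot\}_K$ on the whole of $U$.

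To promote this equality from $U$ to all of $\fZ_{\ul{d}}$, I would invoke regularity of both bivectors on the irreducible normal scheme $\fZ_{\ul{d}}$ (Theorem~\ref{bij}.a)): the bracket $\{\cdot,\cdot\}$ is regular by construction, as the Hamiltonian reduction of the polynomial Lie--Poisson structure on $\fa_{\ul{d}}^*$, and $\{\cdot,\cdot\}_K$ is regular on $\fZ_{\ul{d}}$ by Corollary~\ref{extend}. Two regular Poisson bivectors on an integral scheme that coincide on a nonempty open subscheme must coincide globally, which closes the argument. The only step requiring care is the sign-convention matching in the off-diagonal $\{y,y\}$-formula; this is routine bookkeeping and not a genuine obstacle, so I would not anticipate any serious geometric or algebraic difficulty in executing the plan.
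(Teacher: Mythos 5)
Your argument is correct and is precisely the one the paper intends: the corollary is stated without proof as an immediate consequence of comparing Proposition~\ref{kuznecov} with Proposition~\ref{kuzn-general} in the common \'etale coordinates on the dense open locus where all $A_l$ have simple disjoint spectra, followed by the standard extension of an equality of regular Poisson bivectors from a dense open subset of the integral normal scheme $\fZ_{\ul{d}}$. One caveat: after a consistent relabeling the denominators in the two off-diagonal $\{y,y\}$ formulas have the same orientation, so the printed coefficients $c_{lk}$ and $2\delta_{kl}-c_{kl}$ actually differ by a sign for $k\ne l$ --- this is a sign-convention discrepancy internal to the paper rather than a flaw in your plan, but your remark that the sign bookkeeping is the only step requiring care is well taken and should be resolved explicitly rather than waved through.
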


\subsection{Quantum Hamiltonian reduction. $\fsl_2$ case.}

The natural quantization of the coordinate ring of the space $\fZ_d$ is the \emph{quantum Hamiltonian reduction} $\YO_d:=\left(U(\fa)/U(\fa)\fgl(V)_{\diag}\right)^{\fgl(V)_{\diag}}$.

The algebra $\left(U(\fa)/U(\fa)\fgl(V)_{\diag}\right)^{\fgl(V)_{\diag}}$ is generated by the following elements: $$a_r:=\sum\limits_{i_1,\ldots,i_r} e_{i_1i_2}e_{i_2i_3}\ldots e_{i_ri_1},\ r=1,\ldots, d;$$ $$b_s:=\sum\limits_{i_1,\ldots,i_{s+1}} p_{i_1}e_{i_1i_2}e_{i_2i_3}\ldots e_{i_si_{s+1}}q_{i_{s+1}},\ s=0,\ldots,d-1.$$
We also set $a_0:=d$.

Introduce the following notation:

$$
e_{ij}^{(r)}:=\sum\limits_{i_1,\ldots,i_{r-1}} e_{ii_1}e_{i_1i_2}e_{i_2i_3}\ldots e_{i_{r-1}j}.
$$
We also set $e_{ij}^{(0)}=\delta_{ij}$.

We will use the following relations:
\begin{lem}
\begin{equation} \sum\limits_{i}e_{ki}^{(r)}e_{ij}^{(s)}=e_{kj}^{(r+s)};
\end{equation}
\begin{equation} [ e_{kl},e_{ij}^{(r)}] =\delta_{il}e_{kj}^{(r)}-\delta_{kj}e_{il}^{(r)};
\end{equation}
\begin{equation} [ e_{kl},\sum\limits_{j=1}^de_{ij}^{(r)}q_j] =\delta_{il}\sum\limits_{j=1}^de_{kj}^{(r)}q_j;
\end{equation}
\begin{equation} [ e_{kl}^{(s)},\sum\limits_{j=1}^de_{ij}^{(r)}q_j] =\sum\limits_{t=1}^{s}\sum\limits_{j=1}^de_{kj}^{(r+t-1)}q_je_{il}^{(s-t)}.
\end{equation}
\end{lem}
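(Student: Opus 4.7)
The plan is to prove the four relations sequentially by direct computation from the defining commutation relations, using nothing more than the Leibniz rule for the commutator and induction on the powers $r$ or $s$.

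Relation (1) is a bookkeeping identity: expanding $e_{ki}^{(r)}e_{ij}^{(s)}$ as the sum of all ordered products $e_{ki_1}e_{i_1i_2}\cdots e_{i_{r-1}i}e_{ij_1}e_{j_1j_2}\cdots e_{j_{s-1}j}$ and then summing on the intermediate index $i$, one recovers the defining expression for $e_{kj}^{(r+s)}$. No commutation is used.

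For relation (2) I would induct on $r$. The base case $r=1$ is the $\fgl_d$-bracket. For the inductive step, write $e_{ij}^{(r)}=\sum_m e_{im}e_{mj}^{(r-1)}$ and apply $\ad(e_{kl})$ as a derivation: the $e_{im}$-term contributes $\sum_m(\delta_{ml}e_{ki}^{\,\cdot}-\ldots)e_{mj}^{(r-1)}$ (base case in the first tensor factor), the $e_{mj}^{(r-1)}$-term contributes the inductive expression, and the cross pieces indexed by $m=k$ and $m=l$ telescope using relation (1) to leave exactly $\delta_{il}e_{kj}^{(r)}-\delta_{kj}e_{il}^{(r)}$.

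Relation (3) is a one-line consequence of (2) and $[e_{kl},q_j]=\delta_{lj}q_k$. By Leibniz,
\[
\bigl[e_{kl},\textstyle\sum_j e_{ij}^{(r)}q_j\bigr]
=\sum_j(\delta_{il}e_{kj}^{(r)}-\delta_{kj}e_{il}^{(r)})q_j+\sum_j e_{ij}^{(r)}\delta_{lj}q_k,
\]
and the term $-e_{il}^{(r)}q_k$ from the second contribution of (2) is cancelled by the $+e_{il}^{(r)}q_k$ coming from the $q_j$-commutator, leaving $\delta_{il}\sum_j e_{kj}^{(r)}q_j$.

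Relation (4) is the only one requiring some care, and is proved by induction on $s$. The base case $s=1$ is precisely (3). For $s>1$ write $e_{kl}^{(s)}=\sum_m e_{km}e_{ml}^{(s-1)}$ and apply Leibniz. The first factor contributes, via (3), the $t=1$ term $\sum_j e_{kj}^{(r)}q_j\,e_{il}^{(s-1)}$. The second factor contributes, via the inductive hypothesis, $\sum_m e_{km}\sum_{t=1}^{s-1}\sum_j e_{mj}^{(r+t-1)}q_j e_{il}^{(s-1-t)}$; using (1) to collapse $\sum_m e_{km}e_{mj}^{(r+t-1)}=e_{kj}^{(r+t)}$ and reindexing $t\mapsto t-1$ shifts this into the $t=2,\ldots,s$ range, which combines with the base term into the stated formula. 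The only point one must watch throughout is that $q_j$ does not commute with the $e_{il}^{(s-t)}$ on its right, so the ordering $e_{kj}^{(r+t-1)}q_j e_{il}^{(s-t)}$ must be preserved literally; the argument above does so automatically since every $q_j$ is produced before the factor $e_{il}^{(s-t)}$ to its right. This is the one step that looks innocuous but is the only place non-commutativity has to be tracked, and will be the main (mild) obstacle to a completely mechanical proof.
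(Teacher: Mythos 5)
Your proof is correct, and since the paper's own proof of this lemma is just the word ``Straightforward,'' your direct computation by the Leibniz rule and induction on $r$ and $s$ is exactly the intended argument. All four verifications check out, including the careful preservation of the ordering $e_{kj}^{(r+t-1)}q_je_{il}^{(s-t)}$ in relation (4).
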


\begin{proof}
Straightforward.
\end{proof}

\begin{prop}\label{prop-quantum-relations}
\begin{equation}
[a_r,a_s]=0;\end{equation}
\begin{equation}
[a_1,b_s]=b_{s};\end{equation}
\begin{equation}
[a_{r+1},b_s]-[a_r,b_{s+1}]=b_{r+s}-\sum\limits_{t=0}^{r-1} b_{r+s-t-1}a_t;\end{equation}
\begin{equation}
[b_{r+1},b_s]-[b_r,b_{s+1}]=b_rb_s+b_sb_r.
\end{equation}
\end{prop}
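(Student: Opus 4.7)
The plan is to verify each of the four identities by direct computation in $U(\fa)$, then observe that they descend to $\YO_d=(U(\fa)/U(\fa)\fgl(V)_{\diag})^{\fgl(V)_{\diag}}$ since $a_r,b_s$ are all $\fgl(V)$-invariant and involve only the unprimed generators $e_{ij}$. Relations (i) and (ii) are essentially free: $a_r=\Tr A^r$ is a $\fgl(V)$-invariant element of $U(\fgl(V))$, hence lies in the centre, giving $[a_r,a_s]=0$; and for (ii), $a_1=\sum_k e_{kk}$ commutes with every $\sum_k e^{(s)}_{ij}$-type trace and with every $p_i$, while $[a_1,q_j]=q_j$, whence Leibniz yields $[a_1,b_s]=b_s$.

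For (iii) I would introduce the auxiliary elements $w_k^{(s)}:=\sum_j e^{(s)}_{kj}q_j$, so that $b_s=\sum_k p_k w_k^{(s)}$ and, crucially, $[p_i,w_k^{(s)}]=0$ because $p$'s commute with $q$'s and with every $e^{(s)}_{ab}$. The fourth identity of the lemma then yields the closed form
\[
[a_r,b_s]=\sum_{i,k}\sum_{t=1}^{r}p_i\,w_k^{(s+t-1)}\,e^{(r-t)}_{ik},
\]
and the difference $[a_{r+1},b_s]-[a_r,b_{s+1}]$ telescopes in the index $t$, leaving only the $t=1$ contribution
\[
[a_{r+1},b_s]-[a_r,b_{s+1}]=\sum_{i,k}p_i\,w_k^{(s)}\,e^{(r)}_{ik}=\sum_{i,j,k}p_i\,e^{(s)}_{kj}\,q_j\,e^{(r)}_{ik}.
\]
It then remains to normal-order the right-hand side: iteratively moving $e^{(r)}_{ik}$ to the left of $q_j$ and $e^{(s)}_{kj}$ using the commutators from the lemma, and then summing via $\sum_k e^{(a)}_{ik}e^{(b)}_{kj}=e^{(a+b)}_{ij}$, produces the principal term $b_{r+s}$; the residual quantum corrections reassemble, after cancellation of symmetric $\sum_i p_i a_t q_i$-terms, into $-\sum_{t=0}^{r-1}b_{r+s-t-1}a_t$. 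The base case $r=1$ can be verified by hand (one finds $[a_2,b_s]-[a_1,b_{s+1}]=b_{s+1}-d\cdot b_s$), and induction on $r$ using relation~(ii) propagates the pattern.

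Relation (iv) is established by the same telescoping philosophy applied symmetrically to both slots. Writing $[b_r,b_s]=\sum_{i,j}p_ip_j[w_i^{(r)},w_j^{(s)}]$ (since $p$'s commute among themselves and with all $w$'s) and using the lemma to expand $[w_i^{(r)},w_j^{(s)}]$ as a double sum telescoping in the internal index, the difference $[b_{r+1},b_s]-[b_r,b_{s+1}]$ collapses in exactly the same combinatorial pattern as the classical Poisson computation of Proposition~\ref{prop-poisson-relations}, leaving only the two extremal contributions $b_rb_s$ and $b_sb_r$; these appear in opposite orders because the quantum variables no longer commute, and in the classical limit they coalesce to $2b_rb_s$.

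The main obstacle throughout is the bookkeeping in (iii): one must confirm that every quantum correction coming from $[e^{(a)},q]$ and $[e^{(a)},e^{(b)}]$ collects precisely into the clean shape $b_{r+s-t-1}a_t$, with no residue in terms of other form. A generating-function reformulation $a(u):=d+\sum_{r\ge1}a_ru^{-r}$, $b(u):=\sum_{s\ge0}b_su^{-s-1}$ should streamline the combinatorics by casting (iii) and (iv) as rational commutation relations $(u-v)[a(u),b(v)]=\text{(polynomial in }b,a\text{)}$ and $(u-v)[b(u),b(v)]=b(u)b(v)+b(v)b(u)$ in which the recursive structure and the truncation are transparent; this is the form in which these relations are usually recognised as identities in the shifted/Borel Yangian $\YO$, setting up the surjective homomorphism $\YO\twoheadrightarrow\YO_d$ discussed in the introduction.
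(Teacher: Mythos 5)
Your treatment of the first three identities is essentially the paper's own route: (i) follows because the Gelfand invariants $a_r$ are central in $U(\fgl(V))$, (ii) is a weight computation, and for (iii) you correctly telescope $[a_{r+1},b_s]-[a_r,b_{s+1}]$ down to the single sum $\sum_{k,i,j}p_i e^{(s)}_{kj}q_j e^{(r)}_{ik}$ and then identify it with $b_{r+s}-\sum_{t=0}^{r-1}b_{r+s-t-1}a_t$ by induction on $r$ (your base case $b_{s+1}-d\,b_s$ is right, since $a_0=d$). The packaging via $w_k^{(s)}=\sum_j e^{(s)}_{kj}q_j$ is a harmless and slightly cleaner bookkeeping device.

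The gap is in (iv). After expanding $[b_{r+1},b_s]-[b_r,b_{s+1}]$ and telescoping, one is \emph{not} left only with the two extremal contributions $b_rb_s$ and $b_sb_r$: what survives is a difference of the form
\[
\sum_{t=0}^{s-1}\sum_{k,l,m,i,j}p_kp_{i}e_{kl}^{(r)}\Bigl(e_{ij}^{(s-t-1)}q_j\,e_{lm}^{(t)}q_m-e_{lm}^{(t)}q_m\,e_{ij}^{(s-t-1)}q_j\Bigr),
\]
i.e.\ the same double sum taken with the two $e^{(\bullet)}q$-blocks in opposite orders. Classically this vanishes termwise by commutativity, which is why your appeal to ``the same combinatorial pattern as the classical Poisson computation'' is not a proof here: in $U(\fa)$ the two orderings differ term by term, and their equality after summing over $t$ is a genuine quantum identity. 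This is exactly the content of the paper's Lemma~\ref{lem-comp-b},
$\sum_{t=0}^{s-1}\sum_{m,j}e_{ij}^{(s-t-1)}q_je_{lm}^{(t)}q_m=\sum_{t=0}^{s-1}\sum_{m,j}e_{lm}^{(t)}q_me_{ij}^{(s-t-1)}q_j$,
which requires its own induction on $s$ (the cross terms produced by reordering cancel only when resummed over the full range of $t$). Your proposal needs this lemma, or an equivalent symmetry statement, stated and proved; without it the claim that only $b_rb_s+b_sb_r$ remains is unsubstantiated. The rest of the argument, including the descent of all identities to $\YO_d$ by $\fgl(V)_{\diag}$-invariance, is fine.
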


\begin{proof} The first two relations are obvious. Let us prove the third one. We have
\begin{multline*}
[a_{r+1},b_s]-[a_r,b_{s+1}]=[\sum\limits_{k} e_{kk}^{(r+1)},\sum\limits_{ij} p_{i}e_{ij}^{(s)}q_{j}]-[\sum\limits_{k} e_{kk}^{(r)},\sum\limits_{ij} p_{i}e_{ij}^{(s+1)}q_{j}]=\\
=\sum\limits_{t=1}^{r+1}\sum\limits_{k,i,j}p_{i}e_{kj}^{(t+s-1)}q_je_{ik}^{(r+1-t)}-\sum\limits_{t=1}^{r}\sum\limits_{k,i,j}p_{i}e_{kj}^{(t+s-1)}q_je_{ik}^{(r-t)}
=\sum\limits_{k,i,j}p_{i}e_{kj}^{(s)}q_je_{ik}^{(r)}.
\end{multline*}
Now it suffices to check that \begin{equation*}
\sum\limits_{k,i,j}p_{i}e_{kj}^{(s)}q_je_{ik}^{(r)}=b_{r+s}-\sum\limits_{t=0}^{r-1} b_{r+s-t-1}a_t.
\end{equation*}
This is clear for $r=1$. Assume this for $r=R$ and prove for $r=R+1$:
\begin{multline*}
\sum\limits_{k,i,j}p_{i}e_{kj}^{(s)}q_je_{ik}^{(R+1)}=\sum\limits_{k,l,i,j}p_{i}e_{kj}^{(s)}q_je_{il}^{(R)}e_{lk}=\\
=\sum\limits_{k,l,i,j}p_{i}e_{lk}e_{kj}^{(s)}q_je_{il}^{(R)}-d\sum\limits_{l,i,j}p_{i}e_{lj}^{(s)}q_je_{il}^{(R)}+d\sum\limits_{k,i,j}p_{i}e_{kj}^{(s)}q_je_{ik}^{(R)}-\sum\limits_{k,l,i,j}p_{i}e_{kj}^{(s)}q_j\delta_{ik}e_{ll}^{(R)}=\\
=-b_sa_R+b_{R+s+1}-\sum\limits_{t=0}^{R-1} b_{R+s-t}a_t=b_{R+s+1}-\sum\limits_{t=0}^{R} b_{R+s-t}a_t.
\end{multline*}

Finally, let us prove the last relation.
We will use the following
\begin{lem}\label{lem-comp-b}
$\sum\limits_{t=0}^{s-1}\sum\limits_{m,j}e_{ij}^{(s-t-1)}q_je_{lm}^{(t)}q_m=
\sum\limits_{t=0}^{s-1}\sum\limits_{m,j}e_{lm}^{(t)}q_me_{ij}^{(s-t-1)}q_j$.
\end{lem}

\begin{proof}
Induction on $s$.
\begin{multline*}\sum\limits_{t=0}^{s-1}\sum\limits_{m,j}e_{ij}^{(s-t-1)}q_je_{lm}^{(t)}q_m=\\
=\sum\limits_{t=0}^{s-1}\sum\limits_{m,j}e_{lm}^{(t)}q_me_{ij}^{(s-t-1)}q_j+\\
+\sum\limits_{t=0}^{s-1}\sum\limits_{u=1}^{s-t-1}\sum\limits_{m,j,k}e_{lm}^{(t)}e_{ik}^{(u-1)}q_ke_{mj}^{(s-t-1-u)}q_j
-\sum\limits_{t=0}^{s-1}\sum\limits_{u=1}^{t}\sum\limits_{m,j,k}e_{lk}^{(u-1)}e_{kj}^{(s-t-1)}q_je_{im}^{(s-t-1-u)}q_m=\\
=\sum\limits_{t=0}^{s-1}\sum\limits_{m,j}e_{lm}^{(t)}q_me_{ij}^{(s-t-1)}q_j
+\sum\limits_{t=0}^{s-1}\sum\limits_{u=1}^{s-t-1}\sum\limits_{m,j,k}e_{lm}^{(t)}(e_{ik}^{(u-1)}q_ke_{mj}^{(s-t-1-u)}q_j-e_{mj}^{(s-t-1-u)}q_je_{ik}^{(u-1)}q_k)=\\
=\sum\limits_{t=0}^{s-1}\sum\limits_{m,j}e_{lm}^{(t)}q_me_{ij}^{(s-t-1)}q_j
\end{multline*}
\end{proof}

Now we are ready to check the relation on $b_r$:

\begin{multline*}
[b_{r+1},b_s]-[b_r,b_{s+1}]=[\sum\limits_{k,l} p_{k}e_{kl}^{(r+1)}q_{l},\sum\limits_{i,j} p_{i}e_{ij}^{(s)}q_{j}]-[\sum\limits_{k,l} p_{k}e_{kl}^{(r)}q_{l},\sum\limits_{i,j} p_{i}e_{ij}^{(s+1)}q_{j}]=\\
=\sum\limits_{t=1}^{r+1}\sum\limits_{k,l,i,j}p_kp_{i}e_{kj}^{(t+s-1)}q_je_{il}^{(r+1-t)}q_l-
\sum\limits_{t=0}^{s-1}\sum\limits_{k,l,m,i,j}p_kp_{i}e_{kl}^{(r+1)}e_{im}^{(t)}q_me_{lj}^{(s-t-1)}q_j-\\
-\sum\limits_{t=1}^{r}\sum\limits_{k,l,i,j}p_kp_{i}e_{kj}^{(t+s-1)}q_je_{il}^{(r-t)}q_l+
\sum\limits_{t=0}^{s}\sum\limits_{k,l,m,i,j}p_kp_{i}e_{kl}^{(r)}e_{im}^{(t)}q_me_{lj}^{(s-t)}q_j=\\
=\sum\limits_{k,l,i,j}p_kp_{i}e_{kj}^{(s)}q_je_{il}^{(r)}q_l
-\sum\limits_{t=0}^{s-1}\sum\limits_{k,l,m,i,j}p_kp_{i}e_{kl}^{(r+1)}e_{im}^{(t)}q_me_{lj}^{(s-t-1)}q_j
+\sum\limits_{t=0}^{s}\sum\limits_{k,l,m,i,j}p_kp_{i}e_{kl}^{(r)}e_{im}^{(t)}q_me_{lj}^{(s-t)}q_j=\\
=b_sb_r
-\sum\limits_{t=0}^{s-1}\sum\limits_{k,l,m,i,j}p_kp_{i}e_{kl}^{(r+1)}e_{im}^{(t)}q_me_{lj}^{(s-t-1)}q_j
+\sum\limits_{t=0}^{s-1}\sum\limits_{k,l,m,i,j}p_kp_{i}e_{kl}^{(r+1)}e_{im}^{(t)}q_me_{lj}^{(s-t-1)}q_j+\\
+\sum\limits_{k,l,m,i,j}p_kp_{i}e_{kl}^{(r)}e_{im}^{(s)}q_m\delta_{lj}q_j
-\sum\limits_{t=0}^{s-1}\sum\limits_{k,l,m,i,j}p_kp_{i}e_{kl}^{(r)}e_{lm}^{(t)}q_me_{ij}^{(s-t-1)}q_j=\\
=b_sb_r+\sum\limits_{k,l,m,i}p_kp_{i}e_{kl}^{(r)}e_{im}^{(s)}q_mq_l
-\sum\limits_{t=0}^{s-1}\sum\limits_{k,l,m,i,j}p_kp_{i}e_{kl}^{(r)}e_{lm}^{(t)}q_me_{ij}^{(s-t-1)}q_j=\\
=b_sb_r+b_rb_s+\sum\limits_{t=0}^{s-1}\sum\limits_{k,l,m,i,j}p_kp_{i}e_{kl}^{(r)}e_{ij}^{(s-t-1)}q_je_{lm}^{(t)}q_m
-\sum\limits_{t=0}^{s-1}\sum\limits_{k,l,m,i,j}p_kp_{i}e_{kl}^{(r)}e_{lm}^{(t)}q_me_{ij}^{(s-t-1)}q_j.
\end{multline*}

According to Lemma~\ref{lem-comp-b}, the latter is $b_sb_r+b_rb_s$.

\end{proof}

\subsection{Quantum Hamiltonian reduction. General case.}

Consider the subspace $R$ in the universal enveloping algebra $U(\fa_{\ul{d}})$ consisting of the quadratic elements

\begin{equation}\label{2sided-ideal}
\sum\limits_{m=1}^{d_l} e_{l,mj}f_{l,im}+\sum\limits_{m=1}^{d_{l+1}}f_{l,mj}e'_{l+1,im}+\frac{1}{2}(p_{l+1,i}q_{l,j}+q_{l,j}p_{l+1,i}),\quad l\in\BZ/n\BZ,\ i=1,\ldots,d_{l+1},\ j=1,\ldots,d_{l}.
\end{equation}

\begin{prop} We have $[\fa_{\ul{d}},R]\subset R$ (equivalently, $U(\fa_{\ul{d}})R$ is a two-sided ideal in $U(\fa_{\ul{d}})$).
\end{prop}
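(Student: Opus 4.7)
The proof is a direct computation: for each basis element $x$ of $\fa_{\ul{d}}$ (one of $e_{k,ab}$, $e'_{k,ab}$, $f_{k,ab}$, $p_{k,a}$, $q_{k,a}$) and each spanning element $R_{l,i,j}$ of $R$, one expands $[x,R_{l,i,j}]$ via the Leibniz rule and the defining bracket relations of $\fa_{\ul{d}}$, and verifies that the resulting quadratic expression lies in $R$.

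The structural reason the proof works is that the PBW symbol of $R_{l,i,j}$ is the $(i,j)$-entry of the classical moment map $B_lA_l+A'_{l+1}B_l+p_{l+1}q_l$, and the preceding proposition has already verified that the ideal generated by these is $\ad^*(\fa_{\ul{d}})$-invariant. Since $x$ is linear and $R_{l,i,j}$ is quadratic in the PBW filtration, the top symbol of $[x,R_{l,i,j}]$ is forced to be a scalar linear combination $\sum c^{l',i',j'}\,\sigma(R_{l',i',j'})$ of the classical generators; the difference $[x,R_{l,i,j}]-\sum c^{l',i',j'}R_{l',i',j'}$ therefore lies in $U_{\le 1}(\fa_{\ul{d}})$, and the content of the proposition is that this lower-order remainder vanishes.

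The case analysis proceeds by the type of $x$. (a) For $x$ in one of the $\fgl$-subalgebras of $\fa_{\ul{d}}$, each summand of $R_{l,i,j}$ transforms entry-wise by the natural $\fgl$-action on its tensor indices, producing a manifestly shuffled $R_{l,i',j'}$ with no linear correction. (b) For $x=f_{k,ab}$, the $f$'s commute mutually and with the $p$'s and $q$'s, so only the $\fgl$-factors of $R_{l,i,j}$ contribute, and the computation reduces to case~(a). (c) For $x\in\{p_{k,a},q_{k,a}\}$, the single nontrivial cross-commutator $[q_{l,j},p_{l+1,i}]=f_{l,ij}$ generates all the linear-order corrections, which must cancel between the three summands of $R_{l,i,j}$; the symmetric ordering $\frac{1}{2}(p_{l+1,i}q_{l,j}+q_{l,j}p_{l+1,i})$---equivalently $q_{l,j}p_{l+1,i}-\frac{1}{2}f_{l,ij}$---is chosen precisely for this to happen, and the constant $\frac{1}{2}$ is the unique quantization coefficient that kills all residual linear terms. (d) For all remaining $x$, whose bracket with every generator appearing in $R_{l,i,j}$ vanishes, the claim is immediate.

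The main obstacle is the explicit bookkeeping in case (c): each of the three summands of $R_{l,i,j}$ produces nontrivial scalar and linear contributions that must be tracked and shown to cancel exactly. A convenient approach is to first rewrite $R_{l,i,j}$ in a single fixed ordering so that the classical Poisson calculation of the preceding proposition can be imitated term by term, with the $-\frac{1}{2}f_{l,ij}$ absorbing every quantum-ordering residue. Once cases (a)--(d) are verified, the equivalent formulation ``$U(\fa_{\ul{d}})R$ is a two-sided ideal'' follows from $Rx=xR-[x,R]\subset U(\fa_{\ul{d}})R$.
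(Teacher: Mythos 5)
Your proposal is the direct verification that the paper itself dismisses with the single word ``Straightforward,'' so in approach and in substance it matches the intended proof: the case analysis by generator type is the right organization, and each case does close up inside $R$ (cases (a) and (d) as you describe; in case (b) the two quadratic-in-$f$ contributions from the $B_lA_l$ and $A'_{l+1}B_l$ summands cancel outright rather than ``reducing to case (a)'', but the conclusion is the same).

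One claim in your case (c) is inaccurate, though it does not endanger the proof: the constant $\frac12$ is \emph{not} forced by $[\fa_{\ul{d}},R]\subset R$. Since $f_{l,ij}=[q_{l,i},p_{l+1,j}]$ is central in $\fn(V_l,V_{l+1})$, it commutes with all $p$'s and $q$'s, so $[q_{k,a},R_{l,i,j}]$ and $[p_{k,a},R_{l,i,j}]$ vanish identically for \emph{any} ordering of the $pq$-term --- the $f\cdot q$ (resp.\ $p\cdot f$) contributions from the $B_lA_l$ (resp.\ $A'_{l+1}B_l$) summand cancel against those from the $pq$-term with no residue. Moreover $f_{l,ij}$ transforms under the $\fgl$-factors exactly as the $(i,j)$-entry of $R_l$ does, so replacing $\frac12(p_{l+1,i}q_{l,j}+q_{l,j}p_{l+1,i})=p_{l+1,i}q_{l,j}+\frac12 f_{l,ji}$ by $p_{l+1,i}q_{l,j}+c\,f_{l,ji}$ for any constant $c$ yields an $\ad$-invariant span; the symmetrization is a normalization choice (relevant for the quantization parameter), not the unique coefficient killing lower-order terms. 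If you carry out the computation you will find there are simply no residual linear terms to kill, which only makes your life easier.
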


\begin{proof}
Straightforward.
\end{proof}

The natural quantization of the coordinate ring of the space $\fZ_{\ul{d}}$ is the \emph{quantum Hamiltonian reduction} $\YO_{\ul{d}}:=\left(U(\fa_{\ul{d}})/U(\fa_{\ul{d}})(R+\fgl(V_{\ul{d}})_{\diag})\right)^{\fgl(V_{\ul{d}})_{\diag}}$. The ring $\YO_{\ul{d}}$ has a natural filtration coming from the PBW filtration on $U(\fa_{\ul{d}})$.

\begin{prop}[PBW property]\label{PBW} We have $\gr\ \YO_{\ul{d}}=\BC[\fZ_{\ul{d}}]$.
\end{prop}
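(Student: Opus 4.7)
The plan is to construct a natural surjection $\BC[\fZ_{\ul{d}}]\twoheadrightarrow\gr\YO_{\ul{d}}$ via principal symbols, and then prove it is an isomorphism using the complete intersection of Corollary~\ref{complete} together with a Rees-algebra Koszul-resolution argument.

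Set $I:=U(\fa_{\ul{d}})\cdot(R+\fgl(V_{\ul{d}})_{\diag})$ and $G_{\diag}:=\prod_l GL(V_l)_{\diag}$, so $\YO_{\ul{d}}=(U(\fa_{\ul{d}})/I)^{G_{\diag}}$. The principal symbols of the generators of $R$ in~(\ref{2sided-ideal}) agree with the classical relations~(\ref{poisson-ideal}) cutting out $S_{\ul{d}}\subset\fa_{\ul{d}}^*$: the symmetrisation $\tfrac{1}{2}(p_{l+1,i}q_{l,j}+q_{l,j}p_{l+1,i})$ has symbol $p_{l+1,i}q_{l,j}$, and the remaining summands are already commuting. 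The symbols of $\fgl(V_{\ul{d}})_{\diag}\subset U(\fa_{\ul{d}})$ give the components of the moment map $\pi$. Hence the symbol map induces an algebra surjection $\BC[S_{\ul{d}}\cap\pi^{-1}(0)]\twoheadrightarrow\gr(U(\fa_{\ul{d}})/I)$. Since $G_{\diag}$ is reductive and preserves the PBW filtration, taking invariants is exact and commutes with $\gr$, producing the desired surjection $\varphi\colon\BC[\fZ_{\ul{d}}]\twoheadrightarrow\gr\YO_{\ul{d}}$.

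To prove injectivity of $\varphi$, it suffices to show that $\BC[S_{\ul{d}}\cap\pi^{-1}(0)]\twoheadrightarrow\gr(U(\fa_{\ul{d}})/I)$ is already an isomorphism. The projection $\pi\colon\fa_{\ul{d}}^*\to\fgl(V_{\ul{d}})_{\diag}^*$ is a split linear surjection, so under the identification $A'_l=-A_l$ one has $\pi^{-1}(0)\cong M^{\Gamma}_{\ul{d}}$, and the relations~(\ref{poisson-ideal}) become the chainsaw quiver moment-map equations; hence $S_{\ul{d}}\cap\pi^{-1}(0)\cong\sM_{\ul{d}}$, which by Corollary~\ref{complete} is a reduced complete intersection. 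Combined with the linear $\pi$-equations, which transversally cut out $\pi^{-1}(0)\subset\fa_{\ul{d}}^*$, the full collection of defining equations forms a regular sequence in $\Sym\fa_{\ul{d}}$, and its Koszul complex resolves $\BC[S_{\ul{d}}\cap\pi^{-1}(0)]$. Now form the Rees algebra $U_\hbar(\fa_{\ul{d}}):=\bigoplus_i F_iU(\fa_{\ul{d}})\cdot\hbar^i\subset U(\fa_{\ul{d}})[\hbar]$, flat over $\BC[\hbar]$ with specialisations $\Sym\fa_{\ul{d}}$ at $\hbar=0$ and $U(\fa_{\ul{d}})$ at $\hbar=1$. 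Lift each generator of $I$ to a $\bT$-weight homogeneous element of $U_\hbar(\fa_{\ul{d}})$ specialising at $\hbar=0$ to the corresponding classical equation, and let $I_\hbar$ denote the resulting ideal. The quantum Koszul complex built from these lifts is $\BC[\hbar]$-flat, recovers the classical acyclic Koszul complex at $\hbar=0$, hence by $\BC[\hbar]$-flat base change is acyclic over $\BC[\hbar]$ and resolves $U_\hbar(\fa_{\ul{d}})/I_\hbar$. Specialising at $\hbar=1$ yields $\gr(U(\fa_{\ul{d}})/I)=\BC[S_{\ul{d}}\cap\pi^{-1}(0)]$, and taking $G_{\diag}$-invariants completes the proof.

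The main obstacle is the Koszul-acyclicity step in the noncommutative Rees algebra $U_\hbar(\fa_{\ul{d}})$: because the lifted generators do not commute, $\BC[\hbar]$-flatness of the terms of the Koszul complex does not by itself guarantee acyclicity away from the classical fibre. What has to be checked is that every syzygy among the quantum generators lifts a classical Koszul syzygy in $\Sym\fa_{\ul{d}}$; this can be verified by induction on the PBW degree, using that the generators of $R$ are quadratic and those of $\fgl(V_{\ul{d}})_{\diag}$ are linear, so any unexpected quantum syzygy would produce a classical syzygy of strictly smaller degree, which Corollary~\ref{complete} rules out.
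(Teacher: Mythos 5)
Your proposal is correct and follows essentially the same route as the paper: the easy surjection $\BC[\fZ_{\ul{d}}]\twoheadrightarrow\gr\ \YO_{\ul{d}}$ from principal symbols, the reverse inequality from the complete--intersection property of Corollary~\ref{complete} via a deformed Koszul argument, and reductivity of $\fgl(V_{\ul{d}})_{\diag}$ to pass to invariants. Two remarks on the comparison. First, the paper never builds (nor needs) a full quantum Koszul resolution: it uses only the two-term segment $U(\fa_{\ul{d}})\otimes\Lambda^{2}\to U(\fa_{\ul{d}})\otimes\Lambda^{1}\to U(\fa_{\ul{d}})$ together with a dimension count --- the image in $\Lambda^{1}$ can only grow under deformation, hence the ideal (the image in $U(\fa_{\ul{d}})$) can only shrink --- which is exactly the content of your syzygy-lifting induction but sidesteps the question of whether the higher differentials square to zero; your Rees-algebra flat-base-change paragraph is, as you yourself note, not doing any work, and the real argument is your last paragraph. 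Second, and this is the one point you must make explicit: the induction closes only because
$[R+\fgl(V_{\ul{d}})_{\diag},\,R+\fgl(V_{\ul{d}})_{\diag}]\subset(\fa_{\ul{d}}+\BC)R+\fgl(V_{\ul{d}})_{\diag}$.
After subtracting the Koszul part of the leading term of a quantum syzygy, the correction $\sum c_{ij}r_jr_i=\tfrac12\sum c_{ij}[r_j,r_i]$ must again be rewritable in terms of the generators with strictly smaller filtration degree; ``quadratic plus linear'' alone does not guarantee this. The needed containment is a consequence of the proposition immediately preceding Proposition~\ref{PBW} (namely $[\fa_{\ul{d}},R]\subset R$, i.e.\ that $U(\fa_{\ul{d}})R$ is a two-sided ideal), and is the reason $R$ is defined via the symmetrized lift. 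With that input cited, your argument is complete and coincides in substance with the paper's.
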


\begin{proof}
Clearly, the graded vector space $\gr\ \YO_{\ul{d}}$ is not bigger than $\BC[\fZ_{\ul{d}}]$ (i.e. the dimension of each component of $\gr\ \YO_{\ul{d}}$ is not greater than that of the corresponding component of $\BC[\fZ_{\ul{d}}]$). Let us show that $\gr\ \YO_{\ul{d}}$ is not smaller than $\BC[\fZ_{\ul{d}}]$.

Let $\overline{R}=\gr\ R\subset S(\fa_{\ul{d}})$ be the space of quadratic relations~(\ref{poisson-ideal}). This space of quadratic relations together with $\fgl(V_{\ul{d}})_{\diag}\subset S(\fa_{\ul{d}})$ defines the coordinate ring $\BC[\sM_{\ul{d}}]=S(\fa_{\ul{d}})/S(\fa_{\ul{d}})(\overline{R}+\fgl(V_{\ul{d}})_{\diag})$. Since $\sM_{\ul{d}}$ is a complete intersection, the Koszul complex $S(\fa_{\ul{d}})\otimes\Lambda^{\bullet}(\overline{R}+\fgl(V_{\ul{d}})_{\diag})$ is a resolution of the $S(\fa_{\ul{d}})$-module $S(\fa_{\ul{d}})/S(\fa_{\ul{d}})(\overline{R}+\fgl(V_{\ul{d}})_{\diag})$.

Since $[R+\fgl(V_{\ul{d}})_{\diag},R+\fgl(V_{\ul{d}})_{\diag}]\subset (\fa_{\ul{d}}+\BC)R+\fgl(V_{\ul{d}})_{\diag}$, there is a resolution of the left $U(\fa_{\ul{d}})$-module $U(\fa_{\ul{d}})/U(\fa_{\ul{d}})(R+\fgl(V_{\ul{d}})_{\diag})$, beginning with
\begin{multline*}
U(\fa_{\ul{d}})\otimes\Lambda^{2}(R+\fgl(V_{\ul{d}})_{\diag})\to U(\fa_{\ul{d}})\otimes\Lambda^{1}(R+\fgl(V_{\ul{d}})_{\diag})\to\\\to U(\fa_{\ul{d}})\to U(\fa_{\ul{d}})/U(\fa_{\ul{d}})(R+\fgl(V_{\ul{d}})_{\diag})\to0,
\end{multline*}
which deforms the corresponding segment of the Koszul resolution of $S(\fa_{\ul{d}})/S(\fa_{\ul{d}})(\overline{R}+\fgl(V_{\ul{d}})_{\diag})$. The image of $U(\fa_{\ul{d}})\otimes\Lambda^{2}(R+\fgl(V_{\ul{d}})_{\diag})$ in $U(\fa_{\ul{d}})\otimes\Lambda^{1}(R+\fgl(V_{\ul{d}})_{\diag})$ is not smaller (as a filtered vector space) than the image of $S(\fa_{\ul{d}})\otimes\Lambda^{2}(\overline{R}+\fgl(V_{\ul{d}})_{\diag})$ in $S(\fa_{\ul{d}})\otimes\Lambda^{1}(\overline{R}+\fgl(V_{\ul{d}})_{\diag})$, since the differential deforms the Koszul differential.
Hence $\gr\ U(\fa_{\ul{d}})/U(\fa_{\ul{d}})(R+\fgl(V_{\ul{d}})_{\diag})$ is not smaller than $S(\fa_{\ul{d}})/S(\fa_{\ul{d}})(\overline{R}+\fgl(V_{\ul{d}})_{\diag})$. Since $\fgl(V_{\ul{d}})_{\diag}$ is reductive, the same holds for $\fgl(V_{\ul{d}})_{\diag}$-invariants. Hence $\gr\ \YO_{\ul{d}}=\BC[\fZ_{\ul{d}}]$.
\end{proof}

We consider the following elements of $\YO_{\ul{d}}$:
$$a_{l,r}:=\sum\limits_{i_1,\ldots,i_r} e_{l,i_1i_2}e_{l,i_2i_3}\ldots e_{l,i_ri_1},\
r=1,2,\ldots,\ l\in\BZ/n\BZ;$$ $$b_{l,s}:=\sum\limits_{i_1,\ldots,i_{s+1}} p_{l,i_1}e_{l,i_1i_2}e_{l,i_2i_3}\ldots e_{l,i_si_{s+1}}q_{l,i_{s+1}},\ s=0,1,\ldots,\ l\in\BZ/n\BZ.$$

\begin{prop} For $d_0=0$ the algebra $\YO_{\ul{d}}$ is generated by $a_{l,r},b_{l,s}$ with $l\in\BZ/n\BZ,\ r=1,\ldots, d_l,\ s=0,\ldots,d_l-1$.
\end{prop}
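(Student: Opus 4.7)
The plan is to reduce to the classical statement (Proposition~\ref{poisson-generators}) via the PBW property (Proposition~\ref{PBW}). Let $B \subset \YO_{\ul{d}}$ denote the unital subalgebra generated by the quantum elements $a_{l,r}$ ($1 \le r \le d_l$) and $b_{l,s}$ ($0 \le s \le d_l-1$), $l \in \BZ/n\BZ$. Equip $B$ with the filtration inherited from the PBW filtration on $\YO_{\ul{d}}$ (which in turn comes from the PBW filtration on $U(\fa_{\ul{d}})$). The inclusion $B \hookrightarrow \YO_{\ul{d}}$ is a filtered map, so it induces a graded algebra homomorphism $\gr B \to \gr \YO_{\ul{d}} = \BC[\fZ_{\ul{d}}]$. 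Our goal is to prove that this map is surjective; since $B \hookrightarrow \YO_{\ul{d}}$ is automatically injective and the filtrations are exhaustive, surjectivity of the associated graded will force $B = \YO_{\ul{d}}$.

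Now observe that $\gr \YO_{\ul{d}} = \BC[\fZ_{\ul{d}}]$ carries its natural Poisson bracket $\{\cdot,\cdot\}$, and by general principles of filtered quantizations, for $u \in F^i \YO_{\ul{d}}$, $v \in F^j \YO_{\ul{d}}$ the commutator $[u,v]$ lies in $F^{i+j-1}\YO_{\ul{d}}$, with principal symbol equal to the Poisson bracket of the principal symbols of $u$ and $v$. Consequently the image of $\gr B$ in $\BC[\fZ_{\ul{d}}]$ is a Poisson subalgebra. The principal symbols of the quantum generators $a_{l,r}$ and $b_{l,s}$ are precisely the classical generators of the same name (the quantum formulas and classical formulas are literally identical sums of products of PBW generators, and the rewriting needed to put the classical monomials into PBW order changes only lower filtration terms). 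Thus the image of $\gr B$ in $\BC[\fZ_{\ul{d}}]$ contains all the classical generators $a_{l,r}$ and $b_{l,s}$ in the specified range.

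By Proposition~\ref{poisson-generators}, under the assumption $d_0 = 0$ these classical elements Poisson-generate $\BC[\fZ_{\ul{d}}]$. Therefore the Poisson subalgebra they generate, which is contained in the image of $\gr B$, is already all of $\BC[\fZ_{\ul{d}}]$. Hence $\gr B \to \gr \YO_{\ul{d}}$ is surjective, and a standard induction on the filtration degree gives $B = \YO_{\ul{d}}$, proving the proposition. The one non-routine point to be careful about is the comparison of symbols: one must verify that the commutator of two quantum generators has as its principal symbol the Poisson bracket of their classical counterparts (not something lower), so that Proposition~\ref{poisson-generators} can be invoked at the level of $\gr B$; this is precisely the content of the general fact that $\YO_{\ul{d}}$ is a filtered quantization of $\BC[\fZ_{\ul{d}}]$, which is guaranteed by Proposition~\ref{PBW}.
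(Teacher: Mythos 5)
Your argument is correct and is essentially the paper's own: the printed proof consists of the single sentence ``This follows from Propositions~\ref{poisson-generators} and~\ref{PBW}'', and your filtered-algebra elaboration (the image of $\gr B$ in $\gr\,\YO_{\ul{d}}=\BC[\fZ_{\ul{d}}]$ is a Poisson subalgebra containing the classical generators, hence equals $\BC[\fZ_{\ul{d}}]$ by Proposition~\ref{poisson-generators}, whence $B=\YO_{\ul{d}}$ by induction on the filtration) is precisely the standard way to make that deduction explicit.
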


\begin{proof}
This follows from Propositions~\ref{poisson-generators}~and~\ref{PBW}.
\end{proof}

Introduce the following generating series
\begin{equation}
a_l(u):=1-d_lu^{-1}-\sum\limits_{r=1}^\infty a_{l,r}u^{-r-1}, \quad b_l(u):=\sum\limits_{s=0}^\infty b_{l,s}u^{-s-1}
\end{equation}

We also consider the elements $$b'_{l,s}:=(-1)^s\sum\limits_{i_1,\ldots,i_{s+1}} p_{l,i_1}e'_{l,i_1i_2}e'_{l,i_2i_3}\ldots e'_{l,i_si_{s+1}}q_{l,i_{s+1}},\ s=0,\ldots,d_l-1,\ l\in\BZ/n\BZ,$$
and the corresponding generating series $b'_l(u):=\sum\limits_{s=0}^\infty b'_{l,s}u^{-s-1}$.

\begin{lem}\label{bb'}
$b'_l(u)=b_l(u+d_l)$.
\end{lem}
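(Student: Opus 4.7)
The plan is to recognize both sides as matrix-valued resolvents and reduce the identity to a single application of the (non-commutative) resolvent identity combined with an equivariance observation. Assemble the matrices $E_l:=(e_{l,ij})_{i,j=1}^{d_l}$ and $E'_l:=(e'_{l,ij})$ with entries in $U(\fa_{\ul d})$, and view $p_l,q_l$ as a row and a column vector of length $d_l$ respectively. Summing the defining geometric series one obtains
\[
b_l(u)=p_l^T(uI-E_l)^{-1}q_l,\qquad b'_l(u)=p_l^T(uI+E'_l)^{-1}q_l,
\]
and hence $b_l(u+d_l)=p_l^T\bigl((u+d_l)I-E_l\bigr)^{-1}q_l$. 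Set $H_l:=E_l+E'_l$, so that each entry $h_{l,ij}=e_{l,ij}+e'_{l,ij}$ lies in $\fgl(V_l)_{\diag}\subset\fgl(V_{\ul d})_{\diag}$. Since $(uI+E'_l)-\bigl((u+d_l)I-E_l\bigr)=H_l-d_l I$, the resolvent identity (valid in the associative algebra of matrices over $U(\fa_{\ul d})$) gives
\[
b'_l(u)-b_l(u+d_l)\;=\;p_l^T\,(uI+E'_l)^{-1}\,(d_l I-H_l)\,\bigl((u+d_l)I-E_l\bigr)^{-1}\,q_l.
\]

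The heart of the argument is to show that each component of the column vector $(d_l I-H_l)X$, where $X:=\bigl((u+d_l)I-E_l\bigr)^{-1}q_l$, lies in the left ideal $U(\fa_{\ul d})\cdot\fgl(V_l)_{\diag}$. The key input is the equivariance identity $[h_{l,ij},X_k]=\delta_{jk}X_i$, which I would establish by a short induction on $s$ for the coefficients $E_l^s q_l$ in the expansion $X=\sum_{s\ge 0}(u+d_l)^{-s-1}E_l^s q_l$: the base case is the defining relation $[h_{l,ij},q_{l,k}]=\delta_{jk}q_{l,i}$, and the inductive step uses $[h_{l,ij},e_{l,ab}]=\delta_{ja}e_{l,ib}-\delta_{ib}e_{l,aj}$ (the two cross corrections cancel). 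Granting this,
\[
\sum_k h_{l,jk}X_k\;=\;\sum_k X_k\,h_{l,jk}\;+\;\sum_k [h_{l,jk},X_k]\;=\;\sum_k X_k\,h_{l,jk}\;+\;d_l X_j,
\]
so the $j$-th entry of $(d_l I-H_l)X$ equals $-\sum_k X_k\,h_{l,jk}$, manifestly in $U(\fa_{\ul d})\cdot\fgl(V_l)_{\diag}$.

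Finally, multiplying on the left by the row vector $p_l^T(uI+E'_l)^{-1}$ preserves the left ideal $U(\fa_{\ul d})\cdot\fgl(V_l)_{\diag}$, and a fortiori the larger left ideal $U(\fa_{\ul d})(R+\fgl(V_{\ul d})_{\diag})$ used in the definition of $\YO_{\ul d}$. Hence $b'_l(u)-b_l(u+d_l)$ vanishes in $\YO_{\ul d}$, and the lemma follows. The only subtle point is isolating the equivariance statement $[h_{l,ij},X_k]=\delta_{jk}X_i$; once that is extracted, the remaining manipulation is just non-commutative linear algebra plus careful bookkeeping of the left ideal.
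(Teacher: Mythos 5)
Your proof is correct: the resolvent identity reduces everything to the equivariance relation $[h_{l,ij},(E_l^s q_l)_k]=\delta_{jk}(E_l^s q_l)_i$, which follows from $[e'_{l,ij},e_{l,ab}]=[e'_{l,ij},q_{l,k}]=0$ together with the stated $\fgl(V_l)$-relations, and the resulting difference indeed lands in the left ideal $U(\fa_{\ul d})\cdot\fgl(V_l)_{\diag}$ (I checked the bookkeeping, including the $d_l$ shift coming from $\sum_k[h_{l,jk},X_k]=d_lX_j$, against low-order coefficients). The paper dismisses this as ``straightforward,'' and your argument is exactly the intended direct verification, just cleanly packaged via the matrix resolvent identity.
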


\begin{proof} Straightforward.
\end{proof}

We also introduce the following elements:

\begin{multline}
b_{kl;s_k,\ldots,s_l}:=\\
=\sum\limits_{i^{l}_{1},\ldots,i^{l}_{s_l+1}}\ldots\sum\limits_{i^{k}_{1},\ldots,i^{k}_{s_k+1}}p_{l,i^{l}_1}e_{l,i^{l}_1i^{l}_2}e_{l,i^{l}_2i^{l}_3}\ldots e_{l,i^{l}_{s_l}i^{l}_{s_l+1}}f_{l-1,i^{l}_{s_l+1}i^{l-1}_{1}}e_{l-1,i^{l-1}_{1}i^{l-1}_{2}}\ldots f_{k,i^{k+1}_{s_{k+1}+1}i^{k}_{1}}e_{k,i^{k}_{1}i^{k}_{2}}\ldots q_{k,i^{k}_{s_k+1}},\\ k\le l,\ s_m\in\BZ_{\ge0}.
\end{multline}

\begin{lem} Let $1\le k < l+1\le n-1$. Then $[b_{kl;s_k,\ldots,s_l},b_{l+1,r}]=b_{k,l+1;s_k,\ldots,s_l,r}$.
\end{lem}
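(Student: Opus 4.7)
\emph{Plan.} The proof is a direct calculation using the Leibniz rule together with the defining commutation relations of $\fa_{\ul d}$, and is the quantum analog of the preceding classical identity (whose proof was declared ``straightforward''). The key structural observation is that $b_{l+1,r}$ involves only generators carrying the single level index $l+1$ (namely $p_{l+1,\cdot}$, $e_{l+1,\cdot\cdot}$, $q_{l+1,\cdot}$), whereas $b_{kl;s_k,\ldots,s_l}$ is built from generators of the levels $k,k+1,\ldots,l$. A scan of the commutation relations of $\fa_{\ul d}$ shows that essentially every pair of such generators commutes across these two level ranges: the two families of $\fgl$-generators $e,e'$ commute across distinct level indices; the nilpotent radicals $\fn(V_m,V_{m+1})$ at different $m$ commute with one another; and the abelian generators $p,q$ commute among themselves. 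The sole nontrivial commutation across the $l/(l+1)$ interface is the Heisenberg-type bracket $[q_{l,i},p_{l+1,j}]=f_{l,ij}$.

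Expanding $[b_{kl;s_k,\ldots,s_l},b_{l+1,r}]$ by Leibniz, every contribution involving any other pair of generators vanishes. The computation therefore collapses to a single substitution: a $q_l\cdot p_{l+1}$ adjacency (created by moving the factor $p_{l+1,\cdot}e_{l+1,\cdot\cdot}^{(r)}$ of $b_{l+1,r}$ through the commuting generators of $b_{kl;s_k,\ldots,s_l}$) is replaced by $f_{l,\cdot\cdot}$. This produces exactly the insertion of a new $f_l$ at the junction between levels $l$ and $l+1$, with the $p_{l+1}e_{l+1}^{(r)}$ block from $b_{l+1,r}$ appended to the chain $e_l^{(s_l)}f_{l-1}e_{l-1}^{(s_{l-1})}\cdots f_k e_k^{(s_k)}$ of $b_{kl;s_k,\ldots,s_l}$. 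Inspection shows this is the defining sum for $b_{k,l+1;s_k,\ldots,s_l,r}$.

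The main obstacle is the indexing bookkeeping. The chain element $b_{kl;s_k,\ldots,s_l}$ carries many summation indices distributed across the levels $k,\ldots,l$, and one must check that the summation pattern produced by the Leibniz substitution matches the definition of $b_{k,l+1;s_k,\ldots,s_l,r}$ term by term. Because all reorderings of commuting generators are cost-free (as established in the first paragraph), this reduces to a purely combinatorial check, so once the indices are tracked carefully the identity follows exactly as in the classical case.
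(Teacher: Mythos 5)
Your argument is correct and is exactly the ``straightforward'' Leibniz computation that the paper leaves implicit: every cross-pair of generators between the two factors commutes except for the single Heisenberg-type bracket $[q_{l,i},p_{l+1,j}]=f_{l,ij}$, which splices the level-$(l+1)$ block onto the top of the chain and yields precisely $b_{k,l+1;s_k,\ldots,s_l,r}$. One remark: your computation tacitly (and rightly) reads $b_{kl;s_k,\ldots,s_l}$ as containing the covector coordinates $q_{l,\cdot}$ at the top level and $p_{k,\cdot}$ at the bottom, in accordance with the classical $\langle q_l, A_l^{s_l}\prod_{m=k}^{l-1}B_mA_m^{s_m}p_k\rangle$; the outer factors $p_{l,\cdot}$ and $q_{k,\cdot}$ in the paper's displayed quantum formula are mislabeled, and under that literal reading the commutator would vanish for $k<l$, so your correction of the labels is the right call.
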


\begin{proof}
Straightforward.
\end{proof}

\begin{prop}\label{prop-quantum-relations-general} For $n\ge3$ the following holds:
\begin{equation}
[a_{k,r},a_{l,s}]=0;
\end{equation}
\begin{equation}
[a_{k,1},b_{l,s}]=\delta_{kl}b_{l,s};\end{equation}
\begin{equation}
[a_{k,r+1},b_{l,s}]-[a_{k,r},b_{l,s+1}]=\delta_{kl}(b_{l,r+s}-\sum\limits_{t=0}^{r-1} b_{l,r+s-t-1}a_{k,t});\end{equation}
\begin{equation}
[b_{k,r+1},b_{l,s}]-[b_{k,r},b_{l,s+1}]=b_{k,r}b_{l,s}+b_{l,s}b_{k,r}\quad\text{for}\ l=k;
\end{equation}
\begin{equation}
[b_{k,r+1},b'_{l,s}]-[b_{k,r},b'_{l,s+1}]=-\frac{1}{2}(b_{k,r}b'_{l,s}+b'_{l,s}b_{k,r})\quad\text{for}\ l=k+1;
\end{equation}
\begin{equation}
[b_{k,r_2},[b_{k,r_1},b_{l,s}]]+[b_{k,r_1},[b_{k,r_2},b_{l,s}]]=0\quad\text{for}\ |k-l|=1.
\end{equation}
\end{prop}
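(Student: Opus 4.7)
The plan is to establish the six identities in order, using Proposition~\ref{prop-quantum-relations} (the $\fsl_2$ case) together with the direct-sum decomposition $\fa_{\ul{d}}=\bigoplus_{l\in\BZ/n\BZ}\fa_l$ and the defining relation~(\ref{2sided-ideal}) of the ideal $R$. The first move is bookkeeping: each $\fa_l$ contains $e_{l,ij}$ (first $\fgl$), $e'_{l+1,ij}$ (second $\fgl$), $f_{l,ij}$, $p_{l+1,i}$ and $q_{l,i}$. Hence $a_{k,r}\in U(\fa_k)$, whereas $b_{k,s}\in U(\fa_{k-1}\oplus\fa_k)$ (with $p_{k,\cdot}\in\fa_{k-1}$ and the rest in $\fa_k$). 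For $k\ne l$, generators in disjoint summands commute, so the relations (1)--(3) reduce to the trivial case. For $k=l$, the subalgebra generated by $\{e_{k,ij},p_{k,i},q_{k,i}\}$ is isomorphic to the $\fsl_2$-algebra $\fa$ of the preceding section (note in particular that $p_{k,i}\in\fa_{k-1}$ commutes with $e_{k,\cdot}\in\fa_k$, matching $[e_{ij},p_k]=0$), so the first four identities follow from Proposition~\ref{prop-quantum-relations} applied verbatim in this subalgebra.

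Relation (5) is the main new content. Among the generators appearing in $b_{k,r}$ and $b'_{k+1,s}$ the only non-commuting pair is $(q_{k,\cdot},p_{k+1,\cdot})$, with $[q_{k,i},p_{k+1,j}]=f_{k,ij}$. Expanding the commutators yields
$$[b_{k,r+1},b'_{k+1,s}]=(-1)^s\sum p_{k,i_1}\,e_{k,i_1i_{r+2}}^{(r+1)}\,f_{k,i_{r+2}j_1}\,(e'_{k+1})_{j_1j_{s+1}}^{(s)}\,q_{k+1,j_{s+1}},$$
and an analogous formula for $[b_{k,r},b'_{k+1,s+1}]$. Taking the difference, one factors $e_k^{(r)}$ to the left and $(e'_{k+1})^{(s)}$ to the right; the remaining middle factor is a combination of $\sum_m e_{k,mb}f_{k,am}$ and $\sum_m f_{k,mb}e'_{k+1,am}$, which by the defining relation~(\ref{2sided-ideal}) of $R$ is congruent in $\YO_{\ul{d}}$ to $-\tfrac12(p_{k+1,a}q_{k,b}+q_{k,b}p_{k+1,a})$. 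Reinserting and using that $p_{k+1,\cdot}$ (resp.\ $q_{k,\cdot}$) commutes with the surrounding $(e'_{k+1})^{(s)}$ (resp.\ $e_k^{(r)}$) reassembles $-\tfrac12(b_{k,r}b'_{k+1,s}+b'_{k+1,s}b_{k,r})$.

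For relation (6), with $|k-l|=1$ (say $l=k+1$), the lemma preceding the proposition gives $[b_{k,r},b_{k+1,s}]=b_{k,k+1;r,s}$. The Jacobi identity rewrites the symmetric double bracket as
$$2[b_{k,r_1},b_{k,k+1;r_2,s}]+[[b_{k,r_2},b_{k,r_1}],b_{k+1,s}].$$
One computes $[b_{k,r_1},b_{k,k+1;r_2,s}]$ by pushing $b_{k,r_1}$ through the defining product of $b_{k,k+1;r_2,s}$; the only non-trivial inner commutators are with the factor $b_{k,r_2}$ and are controlled by the fourth identity of Proposition~\ref{prop-quantum-relations}. Telescoping in exactly the way done classically in the proof of the last relation of Proposition~\ref{prop-poisson-relations-general}, the two contributions cancel.

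The principal obstacle is Step 2: one must carefully track the noncommutation of $e_{k,\cdot}$, $f_{k,\cdot\cdot}$ and $e'_{k+1,\cdot}$ while isolating the middle factor. The symmetrization $\tfrac12(pq+qp)$ built into~(\ref{2sided-ideal}) is precisely what absorbs the resulting quantum corrections, and the use of $b'_{l,s}$ rather than $b_{l,s}$ in the statement of (5) is not cosmetic: it is dictated by the requirement that the matrix indices of the commutator $f_{k,\cdot\cdot}$ match those appearing in the moment-map relation.
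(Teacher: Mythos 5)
Your handling of relations (1)--(5) is essentially the paper's (very terse) argument, fleshed out: (1)--(4) reduce to Proposition~\ref{prop-quantum-relations} via the decomposition $\fa_{\ul{d}}=\bigoplus_l\fa_l$ (your observation that $[q_{k,i},p_{k,j}]=0$ is exactly what makes the $\fsl_2$ computation apply verbatim to the $k=l$ block), and (5) is the quantum replay of the classical telescoping in Proposition~\ref{prop-poisson-relations-general}: isolate the middle factor $\sum_m e_{k,\cdot}f_{k,\cdot}+\sum_m f_{k,\cdot}e'_{k+1,\cdot}$ and let the ideal~(\ref{2sided-ideal}) replace it by $-\tfrac12(pq+qp)$. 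Your remark that the symmetrization in $R$ is what absorbs the quantum correction, and that $b'$ rather than $b$ is forced by the index matching, is exactly the content the paper leaves implicit. For relation (6) you take a genuinely different route: the paper does not invoke the Jacobi identity but symmetrizes directly, writing $[b_{k,r_2},b_{kl;r_1,s}]+[b_{k,r_1},b_{kl;r_2,s}]$ and using the commutator formula $[b_{k,r},b_{kl;r',s}]=\sum_{t}b_{k,t}b_{kl;r+r'-t-1,s}$ (the analogue of the fourth $\fsl_2$ relation, antisymmetric under $r\leftrightarrow r'$), so the two terms cancel on the nose. Your rewriting as $2[b_{k,r_1},b_{kl;r_2,s}]+[[b_{k,r_2},b_{k,r_1}],b_{l,s}]$ is equivalent but more expensive: you must expand $[b_{k,r_2},b_{k,r_1}]$ into anticommutators and then push each $b_{k,m}$ past $b_{kl;m',s}$, reintroducing ordering corrections that the direct cancellation avoids. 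Also, the phrase ``the only non-trivial inner commutators are with the factor $b_{k,r_2}$'' is imprecise: $b_{k,k+1;r_2,s}$ is not a product containing $b_{k,r_2}$, and besides its tail $e_k^{(r_2)}q_k$ the head contributes further nonzero brackets ($[q_{k,\cdot},p_{k+1,\cdot}]=f_{k,\cdot\cdot}$ and $[e_{k,\cdot},f_{k,\cdot\cdot}]\ne0$) whose cancellation must be checked; the paper is equally silent on this point, but it is the one place where your sketch, like the paper's, leaves real verification to the reader.
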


\begin{proof}
The first four relations follow immediately from Proposition~\ref{prop-quantum-relations}. Assume that $l-k=1$. Arguing in the same way as in Proposition~\ref{prop-poisson-relations-general}, we have $[b_{k,r+1},b'_{l,s}]-[b_{k,r},b'_{l,s+1}]=-\frac{1}{2}(b_{k,r}b'_{l,s}+b'_{l,s}b_{k,r})$.

Now let us prove the last relation. Assume that $l-k=1,\ r_1\le r_2$.
\begin{multline*}
[b_{k,r_2},[b_{k,r_1},b_{l,s}]]+[b_{k,r_1},[b_{k,r_2},b_{l,s}]]=[b_{k,r_2},b_{kl;r_1,s}]+[b_{k,r_1},b_{kl;r_2,s}]=\\
=\sum\limits_{t=r_1}^{r_2-1}b_{k,t}b_{kl;r_1+r_2-t-1,s}-\sum\limits_{t=r_1}^{r_2-1}b_{k,t}b_{kl;r_1+r_2-t-1,s}=0.
\end{multline*}

\end{proof}

\subsection{Deformation of affine Zastava spaces}

The affine Zastava space admits the following nontrivial deformation. Fix a character $\ul{\mu}=\sum\limits_{l\in\BZ/n\BZ}\mu_l\Tr_{V_l}$ of the Lie algebra $\bigoplus\limits_{l\in\BZ/n\BZ}\fgl(V_l)_{\diag}$ and consider the Hamiltonian reduction of $S_{\ul{d}}$ at this character $\fZ_{\ul{d}}^{\ul{\mu}}:=\pi^{-1}(\ul{\mu})/\prod GL(V_l)_{\diag}$.

The following Poisson automorphisms of $\fa_{\ul{d}}^*$ preserve $S_{\ul{d}}$:
\begin{equation}
\varphi_{\ul{\nu},\ul{\nu'}}:A_l\mapsto A_l+\nu_lE,\ A'_l\mapsto A'_l+\nu'_lE,\ B_l\mapsto B_l, p_l\mapsto p_l,\ q_l\mapsto q_l
\end{equation}
with $\nu_l+\nu'_{l+1}=0$. We have $\varphi_{\ul{\nu},\ul{\nu'}}(\ul{\mu})=\sum\limits_{l\in\BZ/n\BZ}(\mu_l+d_l(\nu_l-\nu_{l-1}))\Tr_{V_l}$. Hence the isomorphism class of $\fZ_{\ul{d}}^{\ul{\mu}}$ depends only on $|\ul{\mu}|:=\sum\limits_{l=1}^n\mu_l$.

\begin{rem} \emph{For $n=1$ this is precisely the Calogero--Moser deformation of the Hilbert scheme.}
\end{rem}

As in the non-deformed situation, we consider the following polynomial invariants $$a_{l,r}:=\Tr (A_l-\frac{\mu_l}{d_l}E)^r,\ r=1,\ldots, d_l,\ l\in\BZ/n\BZ;$$ $$b_{l,s}:=\langle q_l, (A_l-\frac{\mu_l}{d_l}E)^sp_l\rangle,\ s=0,\ldots,d_l-1,\ l\in\BZ/n\BZ.$$

We also introduce the following elements:

\begin{equation}
b_{kl;s_k,\ldots,s_l}:=\langle q_l, (A_l-\frac{\mu_l}{d_l}E)^{s_l}\prod\limits_{m=k}^{l-1}B_m(A_m-\frac{\mu_m}{d_m}E)^{s_m}p_k\rangle,\quad k\le l,\ s_m\in\BZ_{\ge0}.
\end{equation}

The same can be done on the quantum level. We obtain a quantization of deformed affine Zastava spaces $\YO_{\ul{d}}^{\ul{\mu}}:=\left(U(\fa_{\ul{d}})/U(\fa_{\ul{d}})(R+\fgl(V_{\ul{d}})_{\diag}-\ul{\mu}(\fgl(V_{\ul{d}})_{\diag})\right)^{\fgl(V_{\ul{d}})_{\diag}}$.
This algebra also depends only on $|\ul{\mu}|$. The PBW property also holds for $\YO_{\ul{d}}^{\ul{\mu}}$: one has $\gr \YO_{\ul{d}}^{\ul{\mu}}=\gr \BC[\fZ_{\ul{d}}^{\ul{\mu}}]=\BC[\fZ_{\ul{d}}]$. The proof is the same as for Proposition~\ref{PBW}.

\begin{prop}\label{poisson-generators-affine} For $|\ul{\mu}|\ne0$, the coordinate ring of $\fZ_{\ul{d}}^{\ul{\mu}}$ is generated (as a Poisson algebra) by $a_{l,r},b_{l,s}$ with $l\in\BZ/n\BZ,\ r=1,\ldots, d_l,\ s=0,\ldots,d_l-1$.
\end{prop}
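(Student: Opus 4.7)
The plan is to follow the proof of Proposition~\ref{poisson-generators}, with one new step to handle the ``pure cycle'' invariants that appear in the affine case. By Le Bruyn--Procesi classical invariant theory applied to the Ferris wheel quiver $Q$ cut out by the relations (\ref{poisson-ideal}), $\BC[\fZ_{\ul{d}}^{\ul{\mu}}]$ is generated as a commutative algebra by traces along oriented cycles, splitting into three families: (a) loop traces $a_{l,r}=\Tr A_l^r$; (b) path invariants $b_{kl;\ul{s}}$ passing through the framing vertex $W_\infty$ (possibly winding around the wheel); (c) pure cycle invariants $\Tr\!\bigl((B_{n-1}A_{n-1}^{s_{n-1}}\cdots B_0A_0^{s_0})^r\bigr)$ and variants that do not touch $W_\infty$. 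Family (c) is empty when $d_0=0$ (the setting of Proposition~\ref{poisson-generators}); the new task for $|\ul{\mu}|\ne 0$ is to show that every invariant in (c) becomes expressible modulo (a) and (b).

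After the Hamiltonian reduction at $\ul{\mu}$, the moment constraint gives $A_l'=\mu_lI-A_l$, so (\ref{poisson-ideal}) takes the form of the deformed chainsaw identity
\begin{equation*}
A_{l+1}B_l-B_lA_l=\mu_{l+1}B_l+p_{l+1}q_l.
\end{equation*}
Iterating this $n$ times to commute $A_0$ around the basic loop $C=B_{n-1}B_{n-2}\cdots B_0:V_0\to V_0$ collects the $\mu_l$-contributions into $|\ul{\mu}|$ and produces pq-insertions at each step:
\begin{equation*}
[A_0,C]=|\ul{\mu}|\,C+\sum_{l=0}^{n-1}(B_{n-1}\cdots B_{l+1})\,p_{l+1}q_l\,(B_{l-1}\cdots B_0).
\end{equation*}
Applying $\Tr_{V_0}$ annihilates the left-hand side by cyclicity and converts each summand on the right into a wrapping path invariant $q_l(B_{l-1}\cdots B_0)(B_{n-1}\cdots B_{l+1})p_{l+1}$ of family (b); since $|\ul{\mu}|\ne 0$, solving for $\Tr(C)$ expresses it as a polynomial in (b). The same mechanism --- commuting $A_0^r$ or a more general $A$-word through a cyclic product of winding $w\ge 1$ with arbitrary $A$-insertions --- produces $w|\ul{\mu}|$ times the original cycle invariant plus ``broken'' pq-terms (which lie in (b)) and cycle invariants of strictly smaller winding. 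Descending induction then reduces every (c) to a polynomial in (a) and (b).

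The reduction of family (b) to the basic generators $\{a_{l,r},b_{l,s}\}$ is essentially verbatim from the proof of Proposition~\ref{poisson-generators}: the bracket identity $\{b_{kl;\ul{s}},b_{l+1,r}\}=b_{k,l+1;\ul{s},r}$ is purely algebraic and remains valid for any $k\le l$ (in particular for wrapping paths around the wheel), while arbitrary $A_l^{s_l}$-insertions along a path are handled using the deformed chainsaw identity together with brackets against $a_{l,r}$; Cayley--Hamilton on each $A_l$ truncates the generating set to $1\le r\le d_l$, $0\le s\le d_l-1$. The principal obstacle is the combinatorial bookkeeping of the cycle-reduction induction above: a general type (c) invariant is a cyclic word with winding number $w$, multiple $A$-insertions, and possibly several cyclic factors, and the argument must be organized so that every application of $[A_0,-]$ strictly decreases a well-chosen complexity measure (lexicographic on (winding number, total $A$-degree, number of cyclic factors)) modulo family (b). Each step produces only terms of strictly smaller complexity plus path invariants, guaranteeing termination after finitely many steps and completing the proof.
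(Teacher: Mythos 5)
Your key new ingredient for family (c) --- commuting $A_0$ around the full loop $C=B_{n-1}\cdots B_0$, collecting the $\mu_l$'s into $|\ul{\mu}|$, and taking the trace to get $|\ul{\mu}|\Tr(C)=-\sum_k b_{k,k+n-1;0,\ldots,0}$ --- is exactly the relation the paper extracts from~(\ref{poisson-ideal}), so that part of the plan is sound. The genuine gap is in your reduction of family (b): the claim that the concatenation identity $\{b_{kl;\ul{s}},b_{l+1,r}\}=b_{k,l+1;\ul{s},r}$ ``remains valid for any $k\le l$, in particular for wrapping paths'' is false. The lemma in the paper is stated only for $1\le k<l+1\le n-1$ for a reason: at the step where the concatenated path closes up to a full wrap, i.e.\ $l+1\equiv k-1\pmod n$, the factor $b_{l+1,0}$ contains both $p_{l+1}$ and $q_{l+1}$, and \emph{both} brackets $[q_l,p_{l+1}]$ and $[q_{l+1},p_k]$ fire. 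This is why the paper computes $\{b_{0,0},b_{1,n-1;0,\ldots,0}\}=b_{0,n-1;0,\ldots,0}-b_{1,n;0,\ldots,0}$: iterated brackets of the proposed generators only ever produce linear combinations $\sum m_k\,b_{k,k+n-1;0,\ldots,0}$ with $\sum m_k=0$, never an individual wrapping invariant.

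As a consequence your two-stage scheme does not close. After using the trace relation you know $\sum_k b_{k,k+n-1;0,\ldots,0}+|\ul{\mu}|\,C_{1;0,\ldots,0}$, and from the brackets you know all pairwise differences of the $b_{k,k+n-1;0,\ldots,0}$; this still leaves one undetermined linear combination --- you cannot separate $b_{0,n-1;0,\ldots,0}$ from $C_{1;0,\ldots,0}$. The paper resolves this with an additional step you are missing: it brackets $b_{0,n-1;0,\ldots,0}+\frac{|\ul{\mu}|}{n}C_{1;0,\ldots,0}$ against $a_{1,2}+a'_{2,2}$ and uses~(\ref{poisson-ideal}) once more, which (for $|\ul{\mu}|\ne0$) finally expresses $b_{k,k+n-1;0,\ldots,0}$ and $C_{1;0,\ldots,0}$ separately; the higher winding numbers are then handled by induction on $r$ in the same way. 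Your ``descending induction on winding number'' would have to be reorganized around this obstruction, since the once-wrapping path invariants are precisely the elements that cannot be reached by concatenation brackets alone.
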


\begin{proof}
According to classical invariant theory, the coordinate ring of $\fZ_{\ul{d}}$ is generated by the elements $a_{l,r}$, $b_{kl;s_k,\ldots,s_l}$ and $C_{r;s_0,\ldots,s_{rn-1}}:=\Tr (\prod\limits_{m=0}^{rn-1}A^{s_m}B_m)$ for $r=1,2,\ldots$.
Due to the relation~(\ref{poisson-ideal}), one can express $b_{kl;s_k,\ldots,s_l}$ via the sum of products of $a_{l,r}$'s, $b_{l,s}$'s and $b_{kl;s_k,0,\ldots,0}$'s. Analogously, one can express $C_{r;s_0,\ldots,s_{rn-1}}$ via the sum of products of $C_{r;s_0,0\ldots,0}$, $a_{l,r}$'s, $b_{l,s}$'s and $b_{kl;s_k,0,\ldots,0}$'s.

Consider the filtration on $\BC[\fZ_{\ul{d}}]$ by the degree in $f_{l,ij}$, the coefficients of the $B_l$'s. With respect to this filtration, we have
$$b_{kl;s_k,0,\ldots,0}=(s_k+1)\{a_{k,s_k+1}, b_{kl;0,0,\ldots,0}\}+\text{lower terms}$$
and
$$C_{r;s_0,0\ldots,0}=(s_0+1)\{a_{0,s_0+1}, C_{r;0,\ldots,0}\}+\text{lower terms}.$$
Hence it is sufficient to show that $b_{kl;0,0,\ldots,0}$ and $C_{r;0,\ldots,0}$ can be expressed via $a_{l,r},b_{l,s}$.

For $l-k<n-2$ we have $b_{kl;0,0,\ldots,0}=\{\{\ldots\{b_{k,0}b_{k+1,0}\}\ldots,b_{l-1,0}\},b_{l,0}\}$.

We have
\begin{equation*}
\{b_{0,0}, b_{1,n-1;0,0,\ldots,0}\}=b_{0,n-1;0,0,\ldots,0}-b_{1,n;0,0,\ldots,0}.
\end{equation*}
Hence each linear combination $\sum m_kb_{k,k+n-1;0,0,\ldots,0}$ with $\sum m_k=0$ is expressed via $a_{l,r},b_{l,s}$.

On the other hand, due to the relation~(\ref{poisson-ideal}) $b_{0,n-1;0,0,\ldots,0}-b_{1,n;0,0,\ldots,0}= 2b_{0,n-1;0,0,\ldots,0}+\sum\limits_{k=2}^{n-1}b_{k,k+n-1;0,0,\ldots,0}+|\ul{\mu}|C_{1;0,\ldots,0}$. Hence for each $k$, $b_{k,k+n-1;0,0,\ldots,0}+\frac{|\ul{\mu}|}{n}C_{1;0,\ldots,0}$ is expressed via $a_{l,r},b_{l,s}$.

According to the relation~(\ref{poisson-ideal}), we have
\begin{equation*}
\{a_{1,2}+a'_{2,2}, b_{0,n-1;0,0,\ldots,0}+\frac{|\ul{\mu}|}{n}C_{1;0,\ldots,0}\}=2b_{0,1;0,0}b_{2,n-1;0,0,\ldots,0}+2\frac{|\ul{\mu}|}{n}b_{2,n+1;0,\ldots,0}.
\end{equation*}
Hence $b_{k,k+n-1;0,0,\ldots,0}$ and $C_{1;0,\ldots,0}$ are expressed via $a_{l,r},b_{l,s}$.

Now let us proceed by induction. Suppose that $b_{k,k+N;0,0,\ldots,0}$ and $C_{m;0,\ldots,0}$ are expressed via $a_{l,r},b_{l,s}$ for $N<(r-1)n,\ m<r$. For $(r-1)n-1<l-k<rn-2$ we have $b_{kl;0,0,\ldots,0}=\{\{\ldots\{b_{k,0}b_{k+1,0}\}\ldots,b_{l-1,0}\},b_{l,l+rn-1,0}\}$. The same arguments as above shows that for each $k$, $b_{k,k+rn-1;0,0,\ldots,0}+\frac{r|\ul{\mu}|}{n}C_{r;0,\ldots,0}$ is expressed via $a_{l,r},b_{l,s}$.
According to the relation~(\ref{poisson-ideal}), we have
\begin{equation*}
\{a_{1,2}+a'_{2,2}, b_{0,rn-1;0,0,\ldots,0}+\frac{r|\ul{\mu}|}{n}C_{r;0,\ldots,0}\}=2r\frac{|\ul{\mu}|}{n}b_{2,n+1;0,\ldots,0}+\text{lower terms}.
\end{equation*}
Hence $b_{k,k+rn-1;0,0,\ldots,0}$ and $C_{r;0,\ldots,0}$ are expressed via $a_{l,r},b_{l,s}$.
\end{proof}

\begin{cor}\label{quantum-generators-affine}
For $|\ul{\mu}|\ne0$, $\YO_{\ul{d}}^{\ul{\mu}}$ is generated by $a_{l,r},b_{l,s}$ with $l\in\BZ/n\BZ,\ r=1,\ldots, d_l,\ s=0,\ldots,d_l-1$.
\end{cor}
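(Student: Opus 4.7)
The plan is a standard filtered/graded deformation argument, transferring the classical Poisson generation statement of Proposition~\ref{poisson-generators-affine} to the quantum algebra via the PBW property $\gr\YO_{\ul{d}}^{\ul{\mu}}=\BC[\fZ_{\ul{d}}]$ (the analog of Proposition~\ref{PBW} which is asserted to hold for $\YO_{\ul{d}}^{\ul{\mu}}$ with the same proof).

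First I would let $B\subset\YO_{\ul{d}}^{\ul{\mu}}$ denote the associative subalgebra generated by the family $\{a_{l,r},b_{l,s}\}_{l\in\BZ/n\BZ,\,1\leq r\leq d_l,\,0\leq s\leq d_l-1}$, and equip $B$ with the induced filtration from the PBW filtration on $\YO_{\ul{d}}^{\ul{\mu}}$. Then $\gr B$ is a graded subalgebra of $\gr\YO_{\ul{d}}^{\ul{\mu}}=\BC[\fZ_{\ul{d}}]$ containing the principal symbols of the generators, which, under the isomorphism $\gr\YO_{\ul{d}}^{\ul{\mu}}\simeq\BC[\fZ_{\ul{d}}]$, coincide with the classical generators of the same name $a_{l,r},b_{l,s}$ appearing in Proposition~\ref{poisson-generators-affine}.

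The key point is that $\gr B$ is moreover closed under the Poisson bracket on $\BC[\fZ_{\ul{d}}]$. Indeed, for any $x,y\in B$ of filtration degrees $i,j$, the commutator $[x,y]$ belongs to $B$ and lies in filtration degree $\leq i+j-1$; its class in the component of degree $i+j-1$ of $\gr B$ equals $\{\sigma_i(x),\sigma_j(y)\}$ by the very definition of the Poisson bracket arising from a filtered quantization. Hence $\gr B$ is a Poisson subalgebra of $\BC[\fZ_{\ul{d}}]$ containing all classical $a_{l,r}$ and $b_{l,s}$. By Proposition~\ref{poisson-generators-affine} (which uses $|\ul{\mu}|\ne0$ crucially), this Poisson subalgebra is all of $\BC[\fZ_{\ul{d}}]=\gr\YO_{\ul{d}}^{\ul{\mu}}$. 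A standard induction on filtration degree then yields $B=\YO_{\ul{d}}^{\ul{\mu}}$, proving the corollary.

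The only mildly subtle point — and the one I would double-check most carefully — is the verification that the PBW property $\gr\YO_{\ul{d}}^{\ul{\mu}}=\BC[\fZ_{\ul{d}}]$ really does go through with $\ul{\mu}\ne 0$ in exactly the same way as Proposition~\ref{PBW}. This amounts to observing that the additional inhomogeneous term $-\ul{\mu}(\fgl(V_{\ul{d}})_{\diag})$ is of lower filtration degree than the generators of $R+\fgl(V_{\ul{d}})_{\diag}$, so that the Koszul deformation argument used in the proof of Proposition~\ref{PBW} applies verbatim. Beyond that, the proof is purely formal: the classical statement plus PBW force the quantum statement, with no further computation of commutators needed.
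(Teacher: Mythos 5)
Your argument is correct and is exactly the deduction the paper intends: the corollary is left without a written proof precisely because it follows from Proposition~\ref{poisson-generators-affine} together with the PBW property $\gr\,\YO_{\ul{d}}^{\ul{\mu}}=\BC[\fZ_{\ul{d}}]$ (which the paper asserts holds with the same proof as Proposition~\ref{PBW}), mirroring how the undeformed statement is deduced from Propositions~\ref{poisson-generators} and~\ref{PBW}. Your filtered-subalgebra induction is just the standard way of making that deduction explicit, and your remark about the inhomogeneous term $-\ul{\mu}(\fgl(V_{\ul{d}})_{\diag})$ being of lower filtration degree is exactly why the Koszul deformation argument carries over.
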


\section{Yangians}
\subsection{Yangian of $\fsl_n$}
\label{yang fin}
Let $(c_{kl})_{1\leq k,l\leq n-1}$ stand for the Cartan matrix of
$\fsl_n$. The Yangian $Y(\fsl_n)$ is generated
by $\bx_{k,r}^\pm,\bh_{k,r},\ 1\leq k\leq n-1,\ r\in\BN$, with the following
relations (see~\cite{mbook}):

\begin{equation}
\label{11}
[\bh_{k,r},\bh_{l,s}]=0,\ [\bh_{k,0},\bx_{l,s}^\pm]=\pm c_{kl}\bx_{l,s}^\pm,
\end{equation}

\begin{equation}
\label{12}
2[\bh_{k,r+1},\bx_{l,s}^\pm]-2[\bh_{k,r},\bx_{l,s+1}^\pm]=
\pm c_{kl}(\bh_{k,r}\bx_{l,s}^\pm+\bx_{l,s}^\pm\bh_{k,r}),
\end{equation}

\begin{equation}
\label{13}
[\bx^+_{k,r},\bx^-_{l,s}]=\delta_{kl}\bh_{k,r+s},
\end{equation}

\begin{equation}
\label{14}
2[\bx_{k,r+1}^\pm,\bx_{l,s}^\pm]-2[\bx_{k,r}^\pm,\bx_{l,s+1}^\pm]=
\pm c_{kl}(\bx_{k,r}^\pm\bx_{l,s}^\pm+\bx_{l,s}^\pm\bx_{k,r}^\pm),
\end{equation}

\begin{equation}
\label{15}
[\bx_{k,r}^\pm,[\bx_{k,p}^\pm,\bx_{l,s}^\pm]]+
[\bx_{k,p}^\pm,[\bx_{k,r}^\pm,\bx_{l,s}^\pm]]=0,\
k=l\pm1,\ \forall p,r,s\in\BN.
\end{equation}

We will consider the ``Borel subalgebra'' $\YO$ of the Yangian, generated by $\bx_{k,r}^+$ and $\bh_{k,r}$.
For a formal variable $u$ we introduce the generating series
$\bh_k(u):=1+\sum_{r=0}^\infty\bh_{k,r}\hbar^{-r}u^{-r-1};\
\bx_k^+(u):=\sum_{r=0}^\infty\bx_{k,r}^\pm\hbar^{-r}u^{-r-1}$.

We also consider a bigger algebra $\DO\YO$, the ``Borel subalgebra of the Yangian double'', generated by all Fourier components of the series $\bh_k(u):=1+\sum_{r=0}^\infty\bh_{k,r}\hbar^{-r}u^{-r-1};\
\bx_k^+(u):=\sum_{r=-\infty}^\infty\bx_{k,r}^\pm\hbar^{-r}u^{-r-1}$ (i.e. the generating series $\bx_k^+(u)$ are infinite in both positive and negative directions) with the defining relations~(\ref{11},\ref{12},\ref{14},\ref{15}). The algebra $\YO$ is then the subalgebra generated by negative Fourier components of $\bx_k^+(u)$ and $\bh_k(u)$ due to PBW property of the Yangians. We can then rewrite the equations~(\ref{12},\ref{14}) in the following form

\begin{equation}
\label{12'}
\bh_k(u)\bx_l^+(v)\frac{2u-2v-c_{kl}}{2u-2v+ c_{kl}}=\bx_l^+(v)\bh_k(u).
\end{equation}

\begin{equation}
\label{14'}
\bx_k^+(u)\bx_l^+(v)(2u-2v- c_{kl})=(2u-2v+ c_{kl})\bx_l^+(v)\bx_k^+(u).
\end{equation}

The function $\frac{2u-2v-c_{kl}}{2u-2v+ c_{kl}}$ here is understood as a formal power series in $u^{-1},\ v^{-1},\ u^{-1}v$, hence the equation~(\ref{12'}) is well-defined.


Given a sequence $(d_1,\ldots,d_{n-1})$,
we will use a little bit different generators of the Cartan subalgebra of the Yangian,
\begin{equation}
\bA_k(u):=u^{d_k}+A_{k,0}u^{d_k-1}+\ldots+A_{k,r}u^{d_k-r-1}+\ldots,
\end{equation}
obtained as the (unique) solution of the functional equation

\begin{equation}
\bh_k(u)=\bA_k(u+\frac{1}{2})^{-1}\bA_k(u-\frac{1}{2})^{-1}\bA_{k-1}(u)\bA_{k+1}(u)(u+\frac{1}{2})^{d_k}(u-\frac{1}{2})^{d_k}u^{-d_{k-1}}u^{-d_{k+1}},
\end{equation}
where we take $\bA_0(u)=\bA_{n}(u)=1$.

\begin{lem}\label{A-generators}The generators $\bA_k(u)$ of $\DO\YO$ satisfy the relations
\begin{equation}\label{a-rel}
\bA_k(u)\bx_l^+(v)\frac{2u-2v+\delta_{kl}}{2u-2v- \delta_{kl}}=\bx_l^+(v)\bA_k(u).
\end{equation}
\end{lem}

\begin{proof}
Consider the algebra $\DO\YO'$ generated by $\bA_k(u), \bx^+_k(u)$ with the defining relations~(\ref{a-rel}),~(\ref{14})~and~(\ref{15}). There is a homomorphism $\phi:\DO\YO\to\DO\YO'$ such that \begin{multline*}\phi(\bx^+_k(u))=\bx^+_k(u),\\ \phi(\bh_k(u))=\bA_k(u+\frac{1}{2})^{-1}\bA_k(u-\frac{1}{2})^{-1}\bA_{k-1}(u)\bA_{k+1}(u)(u+\frac{1}{2})^{d_k}(u-\frac{1}{2})^{d_k}u^{-d_{k-1}}u^{-d_{k+1}}
\end{multline*}
Let $\DO\YO^+$ be the algebra generated by $\bx_l^+(u)$ with the defining relations~~(\ref{14})~and~(\ref{15}). The quotient of $\BC[\bA_{k,r}]_{r=1}^{\infty}\cdot\DO\YO^+$ by the relation~(\ref{a-rel}) is $\BC[\bA_{k,r}]_{r=1}^{\infty}\otimes\DO\YO^+$ as a filtered vector space. Hence the $\DO\YO'=\BC[\bA_{k,r}]_{r=1}^{\infty}\otimes\DO\YO^+$ as a filtered vector space. One can inductively express $\bA_{k,r}$ via $\phi(\bh_{k,s})$ with $s\le r+1$, hence $\DO\YO'$ is generated by $\phi(\bh_k(u))$ and $\bx_l^+(u)$. Hence $\phi$ is an isomorphism.
\end{proof}

\begin{lem}\label{yangian-rel}
Let $\bA_k(u)$ and $\bx_l^+(u)$ be the generating series of $\DO\YO$. Then the series $$
\ba_k(u)=\frac{\bA_k(u-\frac{1}{2})}{\bA_k(u+\frac{1}{2})}=1-d_ku^{-1}-\sum\limits_{r=1}^\infty\ba_{k,r}u^{-r-1},\quad \bx_l^+(u)
$$ satisfies the following commutator relations
\begin{equation}\label{a-x-defnrel}
[\ba_k(u),\bx_l^+(v)](u-v)=-\frac{\delta_{kl}}{u-v}\bx_l^+(v)\ba_k(u),\quad [\ba_k(u),\ba_l(v)]=0.
\end{equation}
The series $\ba_k(u),\ \bx_l^+(u)$ generate $\DO\YO$ with the defining relations~(\ref{a-x-defnrel}),~(\ref{14})~and~(\ref{15}), and their negative Fourier components generate $\YO$.
\end{lem}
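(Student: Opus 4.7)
The plan is to derive both stated commutation relations from the $\bA_k$--$\bx_l^+$ identity displayed just before the lemma, and then to observe that the substitution $\bh_k(u) \leftrightarrow \ba_k(u)$ is an invertible change of Cartan generators, so that the claimed relations together with (\ref{14}) and (\ref{15}) form a complete presentation of $\YO$.

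For the first commutation, I will specialize the identity $\bA_k(u)\bx_l^+(v)=\bx_l^+(v)\bA_k(u)\,\frac{2u-2v-\delta_{kl}}{2u-2v+\delta_{kl}}$ at $u\mapsto u-\tfrac12$ and at $u\mapsto u+\tfrac12$, invert the second resulting relation, and multiply to obtain
$$
\ba_k(u)\bx_l^+(v)=\bx_l^+(v)\ba_k(u)\cdot\frac{(2u-2v-1-\delta_{kl})(2u-2v+1+\delta_{kl})}{(2u-2v-1+\delta_{kl})(2u-2v+1-\delta_{kl})}.
$$
For $k\ne l$ the rational factor collapses to $1$; for $k=l$ it collapses to $1-1/(u-v)^2$. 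Multiplying the resulting commutator by $u-v$ yields exactly (\ref{a-x-defnrel}). The commutativity $[\ba_k(u),\ba_l(v)]=0$ then follows as soon as the $\bA_k(u)$ are known to pairwise commute, which is immediate because they are determined by the functional system preceding the lemma from the pairwise commuting $\bh_k(u)$ via (\ref{11}).

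To establish the defining-relations assertion, I will show that the map $\bh_k(u)\mapsto \ba_k(u)$ is an invertible change of Cartan generators. Writing $\bA_k(u)=u^{d_k}\exp\varphi_k(u)$ with $\varphi_k\in u^{-1}\BC[[u^{-1}]]$, passage to $\ba_k(u)$ amounts, up to an explicit invertible scalar factor coming from the leading monomial, to the assignment $\varphi_k(u)\mapsto \varphi_k(u-\tfrac12)-\varphi_k(u+\tfrac12)$, a strictly degree-lowering linear isomorphism on $u^{-1}\BC[[u^{-1}]]$. Hence $\bA_k$ (and therefore $\bh_k$) is recovered uniquely from $\ba_k$, so that $\ba_k,\bx_l^+$ generate $\YO$. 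The computation of the previous paragraph then identifies the Cartan defining relations (\ref{11}) and (\ref{12'}) with the relations displayed in (\ref{a-x-defnrel}) under this substitution, while (\ref{14}) and (\ref{15}) are untouched. The main substantial point is the invertibility of this change of Cartan generators; the rest is formal manipulation of the displayed identities for $\bA_k(u)$ and $\bh_k(u)$.
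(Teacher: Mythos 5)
Your derivation of the commutation relations (\ref{a-x-defnrel}) is exactly the paper's: specializing the $\bA_k$--$\bx_l^+$ exchange relation at $u\mp\frac12$, inverting one copy and multiplying gives the factor $\frac{(u-v)^2-1}{(u-v)^2}$ for $k=l$ and $1$ otherwise, and commutativity of the $\ba_k(u)$ is immediate. Your argument for generation --- that $\varphi_k(u)\mapsto\varphi_k(u-\frac12)-\varphi_k(u+\frac12)$ is injective on $u^{-1}\BC[[u^{-1}]]$, so $\bA_k(u)$ and hence $\bh_k(u)$ are recovered from $\ba_k(u)$ --- is the same as the paper's inductive expression of $\bA_{k,r}$ through $\ba_{k,s}$ with $s\le r+1$ (strictly speaking your map is an isomorphism onto $u^{-2}\BC[[u^{-1}]]$ rather than onto $u^{-1}\BC[[u^{-1}]]$, but injectivity is all you use).

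The gap is in the final step. Invertibility of the change of Cartan generators gives a surjection from the algebra $\widetilde{\YO}$ presented by (\ref{a-x-defnrel}), (\ref{14}), (\ref{15}) onto $\YO$; it does not by itself ``identify'' relation (\ref{12'}) with (\ref{a-x-defnrel}). Your computation shows that (\ref{12'}) implies (\ref{a-x-defnrel}); for the presentation claim you need the converse, namely that in $\widetilde{\YO}$ the reconstructed series satisfies $\bA_k(u)\bx_l^+(v)=\bx_l^+(v)\bA_k(u)\frac{2u-2v-\delta_{kl}}{2u-2v+\delta_{kl}}$. That is a uniqueness statement for the operator-valued difference equation $\on{Ad}(\bA_k(u-\frac12))\circ\on{Ad}(\bA_k(u+\frac12))^{-1}=\on{Ad}(\ba_k(u))$ evaluated on $\bx_l^+(v)$: a priori $\on{Ad}(\bA_k(u))$ could act on $\bx_l^+(v)$ by something other than the scalar $\frac{2u-2v-\delta_{kl}}{2u-2v+\delta_{kl}}$ while the composite still acts correctly. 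This does not follow from invertibility inside the commutative Cartan subalgebra; it can be proved by induction on the power of $u^{-1}$, but you have not done so. The paper sidesteps the issue with a triangular-decomposition count: (\ref{a-x-defnrel}) is a straightening rule, so $\widetilde{\YO}$ is spanned by $\BC[\ba_{k,r}]\cdot\YO^+$, which as a filtered vector space is $\BC[\ba_{k,r}]\otimes\YO^+$ and matches the decomposition of $\YO$; hence the surjection is an isomorphism. Either supply the inductive uniqueness argument or replace your last step by this size count.
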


\begin{proof} For $k\ne l$ the relation is obvious, for $k=l$ we have
$$
\ba_k(u)\bx_k^+(v)\frac{u-\frac{1}{2}-v+\frac{1}{2}}{u-\frac{1}{2}-v-\frac{1}{2}}\cdot\frac{u+\frac{1}{2}-v-\frac{1}{2}}{u+\frac{1}{2}-v+\frac{1}{2}}=\bx_k^+(v)\ba_k(u).
$$therefore
$$
\ba_k(u)\bx_k^+(v)\frac{(u-v)^2}{(u-v)^2-1}=\bx_k^+(v)\ba_k(u).
$$

One can inductively express $\bA_{k,r}$ via $\ba_{k,s}$ with $s\le r+1$, hence $\DO\YO$ is generated by $\ba_k(u)$ and $\bx_l^+(u)$. On the other hand, the quotient of $\BC[\ba_{k,r}]_{r=1}^{\infty}\cdot\DO\YO^+$ by the relation~(\ref{a-x-defnrel}) is $\BC[\ba_{k,r}]_{r=1}^{\infty}\otimes\DO\YO^+$ as a filtered vector space. The same argumentation for $\YO$. Hence the assertion.
\end{proof}


\subsection{Classical limit of the Yangian} Consider the filtration on the Yangian $Y(\fsl_n)$ from \cite{mbook}, section 1.4, such that the associated graded algebra $\gr Y(\fsl_n)$ is commutative. Then $\gr Y(\fsl_n)=S(\fsl_n[t])$ as a graded commutative algebra ($\deg x\otimes t^r=r+1$ for $x\in\fsl_n$). The Poisson bracket on $\gr Y(\fsl_n)$ has the degree $-1$ and deforms the Lie-Poisson bracket on $S(\fsl_n[t])$. We have $\deg\bA_{i,r}=\deg\bx_{k,r}^+=r+1$ with respect to this filtration. Due to the PBW property of the Yangian, the subalgebra $\gr\YO\subset\gr Y(\fsl_n)$ is generated as a Poisson algebra by (the leading terms of) $\bA_{i,r},\ \bx_{k,r}^+$, subject to the relations:
\begin{equation}
\{\bA_k(u),\bA_l(v)\}=0;
\end{equation}
\begin{equation}
\{\bA_k(u),\bx_l^+(v)\}(u-v)=-\delta_{kl}\bA(u)\bx^+(v);
\end{equation}
\begin{equation}
\{\bx_k^+(u),\bx_l^+(v)\}(u-v)=c_{kl}\bx^+(u)\bx^+(v);
\end{equation}
in the sense that negative Fourier components of LHS and RHS are equal.
\begin{equation}
\{\bx_{k,r}^+,\{\bx_{k,p}^+,\bx_{l,s}^+\}\}+
\{\bx_{k,p}^+,\{\bx_{k,r}^+,\bx_{l,s}^+\}\}=0,\
k=l\pm1,\ \forall p,r,s\in\BN.
\end{equation}

\subsection{Affine Yangian}

As for the finite case, we will consider the ``affine Borel Yangian''. This is an associative algebra $\widehat{\YO}_{\beta}$ depending on $\beta\in\BC$, generated by the series
\begin{equation}
\bx_k^+(u):=1+\sum\limits_{r=0}^{\infty}\bx_{k,r}u^{-r-1},
\end{equation}
\begin{equation}
\bA_k(u):=u^{d_k}+\sum\limits_{r=0}^{\infty}\bA_{k,r}u^{d_k-r-1},
\end{equation}
with $k\in\BZ$ subject to the relations

\begin{equation}
\bA_{k+n}(u)=\bA_k(u+\beta),\quad \bx^+_{k+n}(u)=\bx^+_k(u+\beta);
\end{equation}
\begin{equation}
\bx_k^\pm(u)\bx_l^\pm(v)(2u-2v\mp c_{kl})=\bx_l^\pm(v)\bx_k^\pm(u)(2u-2v\pm c_{kl}),
\end{equation}
where $(c_{kl})$ stands for the Cartan matrix of $A_\infty$;
\begin{equation}
\bA_k(u)\bx_l^+(v)\frac{2u-2v+\delta_{kl}}{2u-2v- \delta_{kl}}=\bx_l^+(v)\bA_k(u).
\end{equation}
in the sense that negative Fourier components of LHS and RHS are equal.
\begin{equation}
[\bx_{k,r}^\pm,[\bx_{k,p}^\pm,\bx_{l,s}^\pm]]+
[\bx_{k,p}^\pm,[\bx_{k,r}^\pm,\bx_{l,s}^\pm]]=0,\
k=l\pm1,\ \forall p,r,s\in\BN.
\end{equation}

\begin{rem} \emph{The algebra $\widehat{\YO}_{\beta}$ does not depend on $\ul{d}$: one can multiply the generating series $\bA_k(u)$ by any Laurent series $F_k(u^{-1})$ with constant coefficients (with the only condition $F_{k+n}(u)=F_k(u+\beta)$), and the relations remain the same.}
\end{rem}

\begin{rem} \emph{The algebra $\widehat{\YO}_{\beta}$ } is not \emph{a Borel subalgebra of the affine Yangian $\widehat Y_{\beta}$ of type $\widehat{A}_{n-1}$ from~\cite{g}. It is related to the Langlands dual of the $\widehat{\fsl_n}$.}
\end{rem}

Lemma~\ref{yangian-rel} and Lemma~\ref{A-generators} are also true for the affine Yangian, and the proof is the same.

\subsection{Yangian and finite Zastava spaces. $\fsl_2$ case.}

\begin{prop}\label{yangian-qutient-sl_2}
The algebra $\YO_{d}$ is a quotient of the Borel Yangian $\YO$ of $\fsl_2$ by the relations $\bA_r=0\ \text{for}\ r>d$.
\end{prop}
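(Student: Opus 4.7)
The plan is to construct a surjective algebra homomorphism $\phi\colon\YO\to\YO_d$ sending $\bA_r\mapsto 0$ for $r>d$. I define $\phi$ on the generators (in the $\ba,\bx^+$ presentation of \lemref{yangian-rel}) by $\phi(\ba(u))=a(u)$ and $\phi(\bx^+(u))=b(u)$; equivalently $\phi(\ba_r)=a_r$ for $r\geq 1$ and $\phi(\bx^+_s)=b_s$ for $s\geq 0$, with the scalar $\ba_0=d$ going to the scalar $a_0=d$. Three things then need to be verified: $\phi$ respects the defining relations of $\YO$, the image generates $\YO_d$, and $\phi(\bA_r)=0$ for $r>d$.

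The easy relations are the commutativity $[a(u),a(v)]=0$ and the quadratic relation $[b_{r+1},b_s]-[b_r,b_{s+1}]=b_rb_s+b_sb_r$, both of which appear directly in \propref{prop-quantum-relations} (the Serre relation is vacuous for $\fsl_2$, as there is only one simple root). The main obstacle is the relation $(u-v)^2[a(u),b(v)]=-b(v)a(u)$, which must be interpreted as an equality in $\YO_d[u,v]\otimes\YO_d[[u^{-1},v^{-1}]]$ and checked coefficient-by-coefficient in each monomial $u^av^b$. Comparing the coefficient of $u^{-r'}v^{-s'-1}$ on the two sides yields, for $r'\geq 2$ and $s'\geq 0$, the identity
\[ [a_{r'+1},b_{s'}]-2[a_{r'},b_{s'+1}]+[a_{r'-1},b_{s'+2}]=-b_{s'}a_{r'-1}, \]
with boundary cases $[a_1,b_{s'}]=b_{s'}$ (at $r'=0$) and $[a_2,b_{s'}]=2b_{s'+1}-db_{s'}$ (at $r'=1$, using $a_0=d$). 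The boundary cases are first instances of relation~3 of \propref{prop-quantum-relations}, and the generic identity is obtained by subtracting the instance of that relation at $(r,s)=(r'-1,s'+1)$ from the instance at $(r,s)=(r',s')$; the sums $\sum_{t=0}^{r-1}b_{r+s-t-1}a_t$ then telescope, leaving exactly $-b_{s'}a_{r'-1}$.

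Surjectivity is immediate once $\phi$ is a well-defined homomorphism, since its image contains the generators $a_1,\ldots,a_d,b_0,\ldots,b_{d-1}$ of $\YO_d$ singled out in the paper's generation statement (for $d_0=0$). For the vanishing $\phi(\bA_r)=0$ when $r>d$, let $A(u)=u^d+c_1u^{d-1}+\cdots+c_d\in\YO_d[u]$ be the monic polynomial of degree $d$ determined recursively by the functional equation $A(u-1/2)=a(u)A(u+1/2)$: the expansion $a(u)=1-du^{-1}+O(u^{-2})$ ensures that such $c_1,\ldots,c_d$ exist uniquely as polynomial expressions in $a_1,\ldots,a_d$ and $d$. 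Because the Cartan subalgebra of $\YO$ is commutative and $\bA(u)$ is characterized there by the monic-of-degree-$d$ normalization together with $\bA(u-1/2)=\ba(u)\bA(u+1/2)$, the map $\phi$ extends uniquely with $\phi(\bA(u))=A(u)$, a polynomial of degree exactly $d$ in $u$; hence $\phi(\bA_r)=0$ for every $r\geq d$, in particular for $r>d$, and $\phi$ descends to the claimed quotient.
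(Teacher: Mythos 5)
Your construction of the epimorphism $\varphi_d:\YO\to\YO_d$, $\ba(u)\mapsto a(u)$, $\bx^+(u)\mapsto b(u)$, is exactly the paper's first step, and your telescoping verification of $(u-v)^2[a(u),b(v)]=-b(v)a(u)$ from relation~3 of \propref{prop-quantum-relations} is correct. But the proposal has two genuine gaps.

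First, your argument that $\varphi_d(\bA_r)=0$ for $r>d$ does not go through as written. The functional equation $A(u-\tfrac12)=a(u)A(u+\tfrac12)$ imposes infinitely many coefficient conditions; a monic polynomial of degree $d$ has only $d$ unknown coefficients, so solving the first $d$ conditions recursively for $c_1,\dots,c_d$ leaves infinitely many further conditions unverified. These remaining conditions are precisely the nontrivial identities in $\YO_d$ you are trying to prove, so the step ``hence $\phi(\bA(u))$ is a polynomial'' assumes the conclusion. What is needed is an independent identification of $a(u)$ as a ratio of shifts of an honest degree-$d$ polynomial in $\YO_d$; the paper gets this from the Capelli determinant $D(u)$ of the matrix $(e_{ij})$ together with the quantum Newton identity $a(u)=D(-u+d)/D(-u+d-1)$, which gives $\varphi_d(\bA(u))=D(-u+d-\tfrac12)$ and hence the vanishing.

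Second, and more seriously, you only prove that the two-sided ideal generated by $\{\bA_r\,:\,r>d\}$ is contained in $\ker\varphi_d$. This yields a surjection $\YO/(\bA_r,\ r>d)\twoheadrightarrow\YO_d$, i.e.\ only the analogue of \propref{yangian-quotient-general} (``a quotient by \emph{some} ideal containing\dots''), whereas the proposition asserts that the kernel is \emph{exactly} this ideal. The paper closes this by passing to the quasiclassical limit: using the PBW property (\propref{PBW}) one reduces to showing that $\BC[\fZ_d]$ is the quotient of the classical Borel Yangian by the Poisson ideal generated by the $\bA_r$, $r>d$; this is checked against the explicit presentation of $\BC[\fZ_d]$ by $D_r$ and $b_s$ with relations $D_r=0$ for $r>d$ and $b_s+\sum_{r=0}^{d-1}b_{s-r-1}D_r=0$ for $s\ge d$, the latter being obtained as $\{\bA_r,\bx^+_{s-d}\}$ modulo the ideal. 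Without some argument of this kind (a dimension or associated-graded comparison), the claimed identification of $\YO_d$ with the quotient is not established.
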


\begin{proof}
Consider the following generating series of the quantized coordinate ring $\YO_d$ of the Zastava space
$$
b(u)=\sum\limits_{r=0}^\infty b_ru^{-r-1}\ \text{and}\ a(u)=1-du^{-1}-\sum\limits_{r=1}^\infty a_ru^{-r-1}.
$$
According to Proposition~\ref{prop-quantum-relations} we have
\begin{equation*}[b(u),b(v)](u-v)=b(u)b(v)+b(v)b(u),
\end{equation*}
\begin{equation*}
[a(u),b(v)](u-v)=-\frac{1}{u-v}b(v)a(u).
\end{equation*}
in the sense that negative Fourier components of LHS and RHS are equal.

Hence there is an epimorphism $\varphi_d:\YO\to\YO_d$ sending $\ba(u)$ to $a(u)$ and $\bx^+(u)$ to $b(u)$. Thus $\YO_d$ is a quotient of $\YO$.

Let
\begin{equation*}
D(u)=u^{d}+D_0u^{d-1}+D_1u^{d-2}+\ldots+D_{d-1}
\end{equation*}
be the Capelli determinant of the matrix $(e_{ij})$ (see \cite{mbook}, (7.5)). According to the Newton identity (see Theorem~7.1.3 of \cite{mbook}), we have
\begin{equation*}
a(u)=\frac{D(-u+d)}{D(-u+d-1)}.
\end{equation*}
This means that $\varphi_d(\bA(u))=D(-u+d-\frac{1}{2})$. In particular, $\varphi_d(\bA_r)=0$ for $r>d$.

To prove that $\YO_{d}$ is a quotient of $\YO$ by the relations $\bA_r=0\ \text{for}\ r>d$ it suffices to show this in quasiclassical limit. Namely, we have to show that the coordinate ring $\BC[\fZ_d]$ is a quotient of the classical limit of the Yangian by the Poisson ideal generated by $\bA_r\ \text{for}\ r>d$. The ring $\BC[\fZ_d]$ is generated by the coefficients of the characteristic polynomial of the matrix $A$, which are the $D_r$'s, and by the $b_r$'s with the defining relations $$D_r=0\ \text{for}\ r>d$$ and $$b_s+\sum\limits_{r=0}^{d-1}b_{s-r-1}D_r=0\ \text{for}\ s\ge d.$$ We have $\{\bA_r,\bx^+_{s-d}\}=-\bx^+_s+\sum\limits_{r=0}^{d-1}(-1)^r\bx^+_{s-r-1}\bA_r$, hence the second relation also belongs to the Poisson ideal generated by $\bA_r\ \text{for}\ r>d$. Hence the assertion.
\end{proof}

\subsection{Yangian and finite Zastava spaces. General case.}

The quantized coordinate ring of the Zastava space $\YO_{\ul{d}}$ is generated by the coefficients of $a_{k}(u)$ and $b_l(u)$ (or $b'_l(u)=b_l(u+d_l)$) for $0<k,l<n$. According to Proposition~\ref{prop-quantum-relations-general} we have
\begin{equation*} 2(u-v)[b_k(u),b'_l(v)]=c_{kl}(b_k(u)b'_l(v)+b'_l(v)b_k(u)),
\end{equation*}

\begin{equation*}
[a_k(u),b_l(v)](u-v)=-\frac{\delta_{kl}}{u-v}b_l(v)a_k(u).
\end{equation*}
in the sense that negative Fourier components of LHS and RHS are equal.

\begin{prop}\label{yangian-quotient-general}
The algebra $\YO_{\ul{d}}$ is a quotient of the Borel Yangian $\YO$ of $\fsl_n$ by some ideal containing $\bA_{k,r}=0\ \text{for}\ r>d_k$.
\end{prop}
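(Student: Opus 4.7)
The plan is to mimic the $\fsl_2$ argument of Proposition~\ref{yangian-qutient-sl_2}: construct an explicit surjective algebra homomorphism $\phi:\YO\to\YO_{\ul{d}}$, and then, by a Capelli-determinant computation applied blockwise, recognize $\phi(\bA_k(u))$ as an honest polynomial in $u$ of degree $d_k$. The new wrinkle compared to the $\fsl_2$ case is that for adjacent indices $l=k+1$ Proposition~\ref{prop-quantum-relations-general} phrases the commutator relation in terms of $b'_l(v)=b_l(v+d_l)$ (cf.\ Lemma~\ref{bb'}) rather than $b_l(v)$, so $\phi$ is forced to carry index-dependent spectral shifts.

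Set $\gamma_1:=0$ and $\gamma_k:=d_2+d_3+\cdots+d_k$ for $2\le k\le n-1$, so that $\gamma_{k+1}-\gamma_k=d_{k+1}$, and define
\[
\phi(\ba_k(u)):=a_k(u+\gamma_k),\qquad \phi(\bx^+_k(u)):=b_k(u+\gamma_k).
\]
One then checks that $\phi$ respects the defining relations of $\YO$ listed in Lemma~\ref{yangian-rel} together with the Serre relation~(\ref{15}), case by case against Proposition~\ref{prop-quantum-relations-general}. The $a$--$a$ and $a$--$b$ relations translate directly, since a uniform shift preserves $u-v$; the $b$--$b$ relation for $k=l$ matches the corresponding identity of the proposition for the same reason. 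For $l=k+1$, the defining choice $\gamma_{k+1}-\gamma_k=d_{k+1}$ yields $b_{k+1}(v+\gamma_{k+1})=b'_{k+1}(v+\gamma_k)$, which is precisely what converts the Yangian commutator between $\bx^+_k(u)$ and $\bx^+_{k+1}(v)$ into the $b$--$b'$ identity of Proposition~\ref{prop-quantum-relations-general}. The Serre relation is a nested Lie-bracket identity, hence shift-invariant, and agrees with the last item of the proposition. Surjectivity of $\phi$ is immediate from the generation statement preceding Proposition~\ref{prop-quantum-relations-general}: for $d_0=0$ the algebra $\YO_{\ul{d}}$ is generated by the coefficients of the $a_k(u)$ and $b_l(u)$, all of which lie in the image of $\phi$.

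For the kernel statement, I would apply the Newton identity \cite[Thm.~7.1.3]{mbook} to each block $U(\fgl(V_k))\subset U(\fa_{\ul{d}})$ separately. Let $D_k(u)$ be the Capelli determinant of the $d_k\times d_k$ matrix $(e_{k,ij})$, a polynomial of degree $d_k$ in $u$. Newton's identity yields $a_k(u)=D_k(-u+d_k)/D_k(-u+d_k-1)$ inside $\YO_{\ul{d}}$, which, together with the functional equation $\ba_k(u)=\bA_k(u-\tfrac{1}{2})/\bA_k(u+\tfrac{1}{2})$ defining $\bA_k(u)$, forces
\[
\phi(\bA_k(u))=(-1)^{d_k}\,D_k\bigl(-u-\gamma_k+d_k-\tfrac{1}{2}\bigr)
\]
as an identity in $\YO_{\ul{d}}[u]$. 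Since the right-hand side is a polynomial of degree $d_k$ in $u$, we conclude $\phi(\bA_{k,r})=0$ for every $r>d_k$, as required.

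The principal obstacle is the verification in the previous-to-last paragraph: arranging a single family of shifts $\{\gamma_k\}$ so that \emph{all} the relations of Proposition~\ref{prop-quantum-relations-general} translate simultaneously into the corresponding Yangian relations, with careful bookkeeping of the $b$ versus $b'$ discrepancy. Once that is settled, surjectivity is automatic and the Capelli-determinant step is a direct transcription of the $\fsl_2$ argument.
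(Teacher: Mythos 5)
Your proposal is correct and follows essentially the same route as the paper: the paper's proof consists precisely of exhibiting the homomorphism $\varphi_{\ul{d}}$ with index-dependent spectral shifts satisfying $\gamma_{k+1}-\gamma_k=d_{k+1}$ (your $\gamma_k$ differs from the paper's $\sum_{m=1}^k d_m$ only by the uniform constant $d_1$, which is immaterial since all relations of $\YO$ depend only on differences of spectral parameters) and then invoking the blockwise Capelli-determinant/Newton-identity argument from the $\fsl_2$ case. Your write-up supplies the details the paper leaves implicit, including the correct bookkeeping of the $b$ versus $b'$ shift via Lemma~\ref{bb'}.
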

\begin{proof}
There is a homomorphism $\varphi_{\ul{d}}:\YO\to\YO_{\ul{d}}$ sending $\ba_k(u-\sum\limits_{m=1}^kd_m)$ to $a_k(u)$ and $x_l^+(u-\sum\limits_{m=1}^ld_m)$ to $b_l(u)$. The rest of the proof is the same as for the $\fsl_2$ case.
\end{proof}

\begin{conj} $\YO_{\ul{d}}=\YO/\{\bA_{k,r}\ |\ r>d_k\}$.
\end{conj}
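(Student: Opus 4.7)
The proof will closely follow the template of Proposition~\ref{yangian-qutient-sl_2}, generalized to the rank-$n$ case. First I would construct the homomorphism $\varphi_{\ul{d}}:\YO\to\YO_{\ul{d}}$ sending $\ba_k(u-\sum_{m=1}^k d_m)\mapsto a_k(u)$ and $\bx^+_l(u-\sum_{m=1}^l d_m)\mapsto b_l(u)$ (the shift is needed to absorb the $d_k$-term appearing in the expansion of $a_k(u)=1-d_ku^{-1}-\cdots$ versus $\ba_k(u)=1+\cdots$). To see this is well-defined, I invoke the presentation of $\YO$ given in Lemma~\ref{yangian-rel} (via the generators $\ba_k(u)$ and $\bx_l^+(u)$) and verify each of the defining relations:
\begin{itemize}
\item The $[\ba_k,\bx_l^+]$ relation~(\ref{a-x-defnrel}) is the series form of the third displayed identity after Proposition~\ref{prop-quantum-relations-general}.
\item Relation~(\ref{14'}) for $\bx_k^+$ is the series form of the fourth identity (for $k=l$, quadratic) and the fifth identity (for $l=k+1$, using $b_l=b'_l$ up to the shift $d_l$ provided by Lemma~\ref{bb'}).
\item The Serre relations~(\ref{15}) follow from the last identity of Proposition~\ref{prop-quantum-relations-general}.
\item The commutativity of the $\ba_k(u)$ between themselves follows from $[a_{k,r},a_{l,s}]=0$.
\end{itemize}

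Next, surjectivity of $\varphi_{\ul{d}}$ follows from Corollary~\ref{quantum-generators-affine} (or rather its finite-type analog with $d_0=0$): the algebra $\YO_{\ul{d}}$ is generated by the coefficients of $a_k(u)$ and $b_l(u)$, which are in the image by construction.

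Finally, to show $\bA_{k,r}\in\ker\varphi_{\ul{d}}$ for $r>d_k$, I would use the Capelli determinant of the matrix $(e_{k,ij})_{i,j=1}^{d_k}$ for each $\fgl(V_k)$-factor. Exactly as in the $\fsl_2$ case, the Newton identity of~\cite{mbook} (Theorem 7.1.3) expresses the series $a_k(u)$ as a ratio $D_k(-u+d_k)/D_k(-u+d_k-1)$ of Capelli determinants, so $\varphi_{\ul{d}}(\bA_k(u))$ is identified (up to the shift) with $D_k(-u+d_k-\tfrac12)$, a polynomial of exact degree $d_k$ in $u$. Hence all coefficients of $u^{d_k-r-1}$ for $r>d_k$ vanish, which is precisely the assertion $\varphi_{\ul{d}}(\bA_{k,r})=0$ for $r>d_k$.

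The only step that requires more than a routine check is the compatibility of the Serre and $\bx^+$-$\bx^+$ relations across the ``junction'' between adjacent indices $k$ and $k+1$, where one has to be careful about whether one uses $b_l(u)$ or $b'_l(u)=b_l(u+d_l)$; Lemma~\ref{bb'} and the shift $u\mapsto u-\sum_{m=1}^l d_m$ in the definition of $\varphi_{\ul{d}}$ are designed to reconcile these conventions, and the fifth relation of Proposition~\ref{prop-quantum-relations-general} (which is stated for $b_k,b'_l$) then matches~(\ref{14'}) after the substitution. I expect this bookkeeping of shifts to be the main source of potential errors, but it is purely formal once the correct shifts are fixed.
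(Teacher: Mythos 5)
The statement you are proving is stated in the paper as a \emph{conjecture}, and your proposal does not actually close the gap that makes it one. What you establish is: (i) a well-defined homomorphism $\varphi_{\ul{d}}:\YO\to\YO_{\ul{d}}$ (your verification of the relations via Proposition~\ref{prop-quantum-relations-general}, Lemma~\ref{bb'} and Lemma~\ref{yangian-rel} is fine and matches the paper); (ii) surjectivity; and (iii) the inclusion $\{\bA_{k,r}\mid r>d_k\}\subset\ker\varphi_{\ul{d}}$ via the Capelli determinant and the Newton identity. Taken together, these three points prove only that $\YO_{\ul{d}}$ is a quotient of $\YO/\{\bA_{k,r}\mid r>d_k\}$ by \emph{some} further ideal --- which is exactly Proposition~\ref{yangian-quotient-general}, already proved in the paper. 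The conjecture is the reverse inclusion: that $\ker\varphi_{\ul{d}}$ is \emph{generated} as a two-sided ideal by the $\bA_{k,r}$ with $r>d_k$, i.e.\ that the induced surjection $\YO/\{\bA_{k,r}\mid r>d_k\}\to\YO_{\ul{d}}$ is injective. Nothing in your argument addresses this.

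In the $\fsl_2$ case (Proposition~\ref{yangian-qutient-sl_2}) the paper closes this gap by passing to the quasiclassical limit and using that $\fZ_d\simeq\BA^{2d}$: the ring $\BC[\fZ_d]$ is a free polynomial algebra in $D_0,\dots,D_{d-1},b_0,\dots,b_{d-1}$, so one can list \emph{all} its defining relations ($D_r=0$ for $r>d$ and $b_s+\sum_r b_{s-r-1}D_r=0$ for $s\ge d$) and check each lies in the Poisson ideal generated by the $\bA_r$, $r>d$. For general $\ul{d}$ this comparison breaks down: $\fZ_{\ul{d}}$ is a singular affine variety (already for $\ul{d}=(1,1)$ it is a conifold times a line, see~\ref{sl3}), no presentation of $\BC[\fZ_{\ul{d}}]$ by generators and relations is available, and hence one cannot verify that the classical Poisson ideal generated by the $\bA_{k,r}$, $r>d_k$, cuts out exactly $\gr\YO_{\ul{d}}=\BC[\fZ_{\ul{d}}]$ inside the classical limit of $\YO$. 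This is precisely why the paper stops at ``a quotient by some ideal containing'' and records the exact equality only as a conjecture. To turn your proposal into a proof you would need, at minimum, either a presentation of $\BC[\fZ_{\ul{d}}]$ together with a verification that every relation lifts into the ideal $(\bA_{k,r})_{r>d_k}$, or an independent computation showing that the character of $\YO/\{\bA_{k,r}\mid r>d_k\}$ does not exceed the character $F_{\ul{d}}$ of $\BC[\fZ_{\ul{d}}]$ from~\ref{character}; neither is a routine bookkeeping of shifts.
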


\subsection{Affine Yangian and (deformed) affine Zastava}

\begin{prop}\label{yangian-quotient-affine}
For $|\ul{\mu}|\ne0$, the algebra $\YO_{\ul{d}}^{\ul{\mu}}$ is a quotient of $\widehat{\YO}_\beta$ (with $\beta=\sum\limits_{l=1}^n(d_l+\mu_l)$) by some ideal containing $\bA_{k,r}=0\ \text{for}\ r>d_k$.
\end{prop}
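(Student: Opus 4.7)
The plan is to adapt the strategy of Proposition~\ref{yangian-quotient-general} to the affine deformed case, with the subtlety that the periodicity $\bA_{k+n}(u)=\bA_k(u+\beta)$ of $\widehat{\YO}_\beta$ must be matched by an intrinsic relation in $\YO_{\ul{d}}^{\ul{\mu}}$. First, inside $\YO_{\ul{d}}^{\ul{\mu}}$ I form the generating series $a_k(u)=1-d_ku^{-1}-\sum_{r\geq 1}a_{k,r}u^{-r-1}$ and $b_l(u)=\sum_{s\geq 0}b_{l,s}u^{-s-1}$ for all $k,l\in\BZ/n\BZ$ out of the generators supplied by Corollary~\ref{quantum-generators-affine}. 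Redoing the computations behind Proposition~\ref{prop-quantum-relations-general} with $A_l$ replaced everywhere by $A_l-(\mu_l/d_l)E$ should yield the $\ul{\mu}$-deformed commutation relations $[a_k(u),a_l(v)]=0$, $(u-v)[a_k(u),b_l(v)]=-\delta_{kl}(u-v)^{-1}b_l(v)a_k(u)$, the current-type bracket between $b_k(u)$ and $b'_l(v)=b_l(v+d_l)$ governed by the $A_\infty$ Cartan matrix, and the Serre relations. These are precisely the defining relations of $\widehat{\YO}_\beta$ on one fundamental period (cf.\ Lemma~\ref{yangian-rel}), so the assignment $\ba_k(u-\sum_{m=1}^kd_m)\mapsto a_k(u)$ and $\bx^+_k(u-\sum_{m=1}^kd_m)\mapsto b_k(u)$ for $k=1,\dots,n$ defines a partial map consistent with all non-periodic relations, exactly as in the finite case.

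To promote this to a genuine homomorphism $\widehat{\YO}_\beta\to\YO_{\ul{d}}^{\ul{\mu}}$, one must verify that the wraparound relations between $\bx^+_n$ and $\bx^+_{n+1}$ (equivalently, between the images of $\bx^+_0$ and of $\bx^+_1$ after a $\beta$-shift) are actually satisfied in $\YO_{\ul{d}}^{\ul{\mu}}$, and that this forces $\beta=\sum_l(d_l+\mu_l)$. The $\sum_l d_l$ piece is the standard untwisted contribution, recording the total shift accumulated by the period-$n$ substitution $u\mapsto u-\sum_{m=1}^k d_m$ used above; the $\sum_l \mu_l$ piece is the new ingredient of the deformation, entering because the characteristic polynomial of $A_l-(\mu_l/d_l)E$ differs from that of $A_l$ by a uniform spectral shift by $\mu_l/d_l$ per eigenvalue, and wrapping once around the Ferris wheel quiver of~\ref{strat} accumulates these individual shifts into the single total $\sum_l \mu_l$. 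Once periodicity is established, surjectivity is immediate from Corollary~\ref{quantum-generators-affine} (this is where, and only where, the hypothesis $|\ul{\mu}|\neq 0$ is used), and the assertion $\bA_{k,r}\mapsto 0$ for $r>d_k$ is automatic: via the Capelli--Newton identity of~\cite{mbook}, the image of $\bA_k(u)$ coincides, up to a linear shift in $u$, with the characteristic polynomial of $A_k-(\mu_k/d_k)E$, hence is a polynomial of degree exactly $d_k$.

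The main obstacle I anticipate is the periodicity verification in the second step. Concretely, it amounts to showing that the images of $\bx^+_0(u)$ and $\bx^+_1(u+\beta)$ in $\YO_{\ul{d}}^{\ul{\mu}}$ satisfy the Serre and current-type relations dictated by the neighboring Cartan entry $c_{0,1}=-1$ of $A_\infty$, and that the product of images of $\ba_k(u)$ telescoped once around the cycle picks up the predicted spectral shift by $\beta$. This calculation relies essentially on the Hamiltonian reduction relation~(\ref{2sided-ideal}), which is used to re-express the ``cross-terms'' $B_lA_l+A'_{l+1}B_l+p_{l+1}q_l=0$ in terms of the chosen generators $a_{k,r},b_{k,s}$, causing the cyclic product to collapse to a tractable expression. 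The intra-period commutation and Serre verifications of the first step, and the degree computation of $\bA_k(u)$ in the third step, both proceed in close analogy with the proofs of Propositions~\ref{yangian-qutient-sl_2} and~\ref{yangian-quotient-general} and require no essentially new ideas.
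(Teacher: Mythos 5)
Your proposal is correct and follows essentially the same route as the paper, whose entire proof reads ``the same as for Proposition~\ref{yangian-quotient-general}'': the shifted assignment $\ba_k(u-\sum_{m=1}^kd_m)\mapsto a_k(u)$, $\bx^+_k(u-\sum_{m=1}^kd_m)\mapsto b_k(u)$, the relations supplied by Proposition~\ref{prop-quantum-relations-general}, the Capelli determinant forcing $\bA_{k,r}\mapsto0$ for $r>d_k$, and surjectivity from Corollary~\ref{quantum-generators-affine} (the only place $|\ul{\mu}|\ne0$ enters) --- indeed you spell out the wraparound verification that the paper leaves implicit. One small correction to your heuristic: the $\sum_l\mu_l$ contribution to $\beta$ does not arise from per-eigenvalue shifts of $A_l-\frac{\mu_l}{d_l}E$ (which would only shift the spectral variable by $\mu_l/d_l$ at each node); it arises from reducing at the character $\ul{\mu}$, so that the relation~(\ref{poisson-ideal}) deforms to $A_{l+1}B_l-B_lA_l+p_{l+1}q_l=\mu_{l+1}B_l$ and telescoping once around the cycle shifts the spectral variable by $|\ul{\mu}|$ on top of the $\sum_ld_l$ coming from your substitutions.
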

\begin{proof}
The same as for Proposition~\ref{yangian-quotient-general}.
\end{proof}

\begin{conj} For $|\ul{\mu}|\ne0$, we have $\YO_{\ul{d}}^{\ul{\mu}}=\widehat{\YO}_\beta/\{\bA_{k,r}\ |\ r>d_k\}$.
\end{conj}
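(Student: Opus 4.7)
The plan is to imitate the proof of Proposition~\ref{yangian-quotient-general} essentially verbatim, adapted to the affine and deformed setting. First, I would define a homomorphism $\varphi^{\ul{\mu}}_{\ul{d}}:\widehat{\YO}_\beta\to\YO_{\ul{d}}^{\ul{\mu}}$ by specifying, for each $k\in\BZ/n\BZ$,
\[
\ba_k\!\left(u-\sigma_k\right)\mapsto a_k(u),\qquad
\bx_k^+\!\left(u-\sigma_k\right)\mapsto b_k(u),\qquad \sigma_k:=\sum_{m=1}^{k}(d_m+\mu_m),
\]
and extending to all $k\in\BZ$ via the periodicity rules $\bA_{k+n}(u)=\bA_k(u+\beta)$, $\bx_{k+n}^+(u)=\bx_k^+(u+\beta)$. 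Consistency of this prescription with the $\BZ/n\BZ$-indexing of the target forces $\beta=\sigma_n=\sum_{l=1}^n(d_l+\mu_l)$, which is exactly the stated value. The non-periodic defining relations of $\widehat{\YO}_\beta$ among $a_k(u)$ and $b_l(u)$ are then verified by direct translation of Proposition~\ref{prop-quantum-relations-general} into generating-series form, the passage from $b_l(u)$ to $b'_l(u)=b_l(u+d_l)$ (see Lemma~\ref{bb'}) being responsible for the appearance of the symmetric Cartan matrix entries; the Serre-type relations and the boundary commutators involving the $\BZ/n$-wrap are automatic from the periodicity built into the definition.

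Next I would establish surjectivity. This is the step where the hypothesis $|\ul{\mu}|\ne0$ is essential: by Corollary~\ref{quantum-generators-affine}, the algebra $\YO_{\ul{d}}^{\ul{\mu}}$ is generated by the coefficients of $a_l(u)$ and $b_l(u)$ for $l\in\BZ/n\BZ$, and these all lie in the image of $\varphi^{\ul{\mu}}_{\ul{d}}$. (In contrast, for $|\ul{\mu}|=0$ there are extra generators of the form $\Tr(\prod_m B_m)^r$ as noted after Proposition~\ref{poisson-generators}, and $\varphi^{\ul{\mu}}_{\ul{d}}$ would fail to be surjective.)

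The inclusion $\bA_{k,r}\in\ker\varphi^{\ul{\mu}}_{\ul{d}}$ for $r>d_k$ is then obtained exactly as in the proof of Proposition~\ref{yangian-qutient-sl_2}: one has, block by block in $l$, the identity
\[
a_l(u)=\frac{D_l(-u+d_l-\tfrac12)}{D_l(-u+d_l+\tfrac12)},
\]
where $D_l(u)$ is the Capelli determinant of the shifted $d_l\times d_l$ matrix $(e_{l,ij}-\tfrac{\mu_l}{d_l}\delta_{ij})$, a polynomial of degree $d_l$ in $u$ by the quantum Newton identity. Consequently $\varphi^{\ul{\mu}}_{\ul{d}}(\bA_k(u))$ is a monic polynomial of degree $d_k$ in $u$, so all higher coefficients $\bA_{k,r}$ for $r>d_k$ map to zero.

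I expect the main obstacle to be the bookkeeping in the first step: verifying that the assignment $\ba_k(u-\sigma_k)\mapsto a_k(u)$, $\bx_k^+(u-\sigma_k)\mapsto b_k(u)$ is simultaneously compatible with (i) the deformed Cartan--shift relation of $\widehat{\YO}_\beta$, (ii) the boundary Serre relation connecting $k=n-1$ to $k=0$ through $\bx_{-1}^+(u)=\bx_{n-1}^+(u-\beta)$, and (iii) the periodicity $\bA_{k+n}(u)=\bA_k(u+\beta)$, all at once. The non-periodic commutation relations are handled by Lemma~\ref{yangian-rel} together with Proposition~\ref{prop-quantum-relations-general}. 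The new content, coming from the deformation, is that the total shift accumulated through one full $\BZ/n\BZ$-cycle of the Capelli determinants equals $|\ul{\mu}|+\sum d_l$, which is precisely the required $\beta$; this is exactly the Hamiltonian-reduction parameter introduced in the deformation $\fZ_{\ul{d}}^{\ul{\mu}}$ and accounts for the appearance of the deformation parameter in the affine Yangian central charge.
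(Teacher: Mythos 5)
Your three steps --- constructing $\varphi^{\ul{\mu}}_{\ul{d}}$, proving surjectivity from Corollary~\ref{quantum-generators-affine} (where $|\ul{\mu}|\ne0$ indeed enters), and showing via the Capelli determinant/Newton identity that $\bA_{k,r}\mapsto0$ for $r>d_k$ --- are all sound, but together they establish only that $\YO_{\ul{d}}^{\ul{\mu}}$ is a quotient of $\widehat{\YO}_\beta$ by \emph{some} two-sided ideal \emph{containing} $\{\bA_{k,r}\mid r>d_k\}$. That is precisely Proposition~\ref{yangian-quotient-affine}, which the paper does prove. The statement you were asked to prove is the \emph{Conjecture}: that the kernel of $\varphi^{\ul{\mu}}_{\ul{d}}$ equals the two-sided ideal generated by these elements, i.e.\ that the induced surjection $\widehat{\YO}_\beta/\{\bA_{k,r}\mid r>d_k\}\twoheadrightarrow\YO_{\ul{d}}^{\ul{\mu}}$ is also injective. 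Your argument says nothing about this direction, and this is exactly the part the authors could not supply --- which is why it is stated as a conjecture rather than a proposition.

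To appreciate why the missing half is hard, compare with the $\fsl_2$ case (Proposition~\ref{yangian-qutient-sl_2}), the only place the paper proves an exact equality. There the reduction to the quasiclassical limit works because $\fZ_d\simeq\BA^{2d}$ is a polynomial ring, so one can list all defining relations of $\BC[\fZ_d]$ in the generators $D_r,b_s$ and check each lies in the Poisson ideal generated by $\bA_r$, $r>d$. In the general affine deformed case one would have to (i) exhibit a complete set of relations for $\BC[\fZ_{\ul{d}}]$ (a singular variety already in the $\fsl_3$ example \ref{sl3}, where it is a conifold times a line) in terms of $a_{l,r},b_{l,s}$ and show they all lie in the Poisson ideal, or else bound the Hilbert series of $\widehat{\YO}_\beta/\{\bA_{k,r}\mid r>d_k\}$ from above by the character $F_{\ul{d}}$ of \ref{character}; and (ii) justify the passage from the classical to the quantum statement by a flatness/PBW argument for this quotient of the affine Borel Yangian. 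Neither ingredient is available in the paper, and neither appears in your write-up. As it stands, your proposal is a correct proof of the proposition, not of the conjecture.
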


\section{Speculations on quantization of Laumon spaces in finite characteristic}

For finite Laumon spaces $\CP_{\ul{d}},\ d_0=0$, a conjecture about quantum
cohomology was formulated in~\cite{fr} (Conjecture~4.8 and Corollary~4.10),
and proved recently by A.~Negut. It follows that the monodromy of the quantum
connection gives rise to an action of the pure braid group on the cohomology
of $\CP_{\ul{d}}$. According to Bridgeland-Bezrukavnikov-Okounkov philosophy,
if we transfer this action to the K-theory (via Chern character), then it should
come from an action of the pure braid group on the derived coherent category
of $\CP_{\ul{d}}$.
In this section we discuss various possibilities to construct an action of the
pure braid group on the equivariant derived category of coherent sheaves on
$\CP_{\ul{d}}$.

\subsection{Variation of stability conditions}
\label{vari}
We consider a vector $\zeta=(\zeta_l)_{l\in\BZ/n\BZ}$ with integral coordinates.
Let $\chi:=\prod_{l\in\BZ/n\BZ}\det_l^{-\zeta_l}$ stand for the corresponding
character of $G_{\ul{d}}$. Let $\BC[M^\Gamma_{\ul{d}}]^{\chi^r}\ (r\in\BN)$ stand
for the $\chi^r$-isotypic subspace of $\BC[M^\Gamma_{\ul{d}}]$.
Let $\CP^\zeta_{\ul{d}}$ stand for the projective spectrum of the graded algebra
$\bigoplus_{r\in\BN}\BC[M^\Gamma_{\ul{d}}]^{\chi^r}$. In particular,
$\CP^{(-1,\ldots,-1)}_{\ul{d}}=\CP_{\ul{d}}$. According to the GIT,
$\CP^\zeta_{\ul{d}}$ is the moduli space of $S$-equivalence classes of
$\zeta$-semistable $Q$-modules. We recall the required notions
following~\cite{na}.

A module $(V_\bullet,A_\bullet,B_\bullet,p_\bullet,q_\bullet)$ over the the
chainsaw quiver is called $\zeta$-semistable if\\
(a) for subspaces $V'_\bullet\subset V_\bullet$ such that
$B_l(V'_l)\subset V'_{l+1}$
and $A_l(V'_l)\subset V'_l$, and $\on{Ker}q_l\supset V'_l$ we have
$\langle\zeta,\ul{\dim}V'_\bullet\rangle\leq0$. Here $\langle\cdot,\cdot\rangle$
stands for the standard scalar product, i.e. the sum of products of
coordinates.\\
(b) for subspaces $V'_\bullet\subset V_\bullet$ such that
$B_l(V'_l)\subset V'_{l+1}$
and $A_l(V'_l)\subset V'_l$, and $\on{Im}p_l\subset V'_l$ we have
$\langle\zeta,\ul{\on{codim}}V'_\bullet\rangle\geq0$.

We say that a module $(V_\bullet,A_\bullet,B_\bullet,p_\bullet,q_\bullet)$ is
$\zeta$-stable if the above inequalities are strict unless
$V'_\bullet=0$ in (a), and $V'_\bullet=V_\bullet$ in (b). If $\zeta$ and
$\zeta'$ are proportional, the stability conditions coincide, so in the
definition of stability we may take the vectors $\zeta$ with rational
coordinates.

Let us reformulate these conditions in a slightly different way.
We set $\zeta_\infty:=-\langle\zeta,\ul{d}\rangle$, and $\widetilde{\zeta}:=
(\zeta,\zeta_\infty)$ (an $n+1$-dimensional vector). Recall that the line
$W_\infty$ is also a part of data of a $Q$-module, and now we allow to vary
the dimension of $W_\infty$ (in particular, we allow $W_\infty=0$), i.e. we
consider the abelian category of $Q$-modules. Accordingly, we introduce the
enhanced dimension $\widetilde{\ul{d}}=(\ul{d},d_\infty):=
(\ul{d},\dim W_\infty)$.
In case $W_\infty=0$ we assume $\langle\zeta,\ul{d}\rangle=0$.
Given a $Q$-module $Y$ with $\dim W_\infty\leq1$, and a $Q$-submodule
$Y'\subset Y$ of enhanced dimension $\widetilde{\ul{d}}{}'$ (where the last
coordinate is either 0 or 1) we define the slope by
$\theta_\zeta(Y'):=\frac{\langle\widetilde{\zeta},\widetilde{\ul{d}}{}'\rangle}
{\langle(1,\ldots,1),\widetilde{\ul{d}}{}'\rangle}$.
We say that a $Q$-module $Y$ is $\zeta$-semistable if for any nonzero
$Q$-submodule $Y'\subset Y$ we have $\theta_\zeta(Y')\leq\theta_\zeta(Y)$.
We say that $Y$ is $\zeta$-stable, if the above inequality is strict unless
$Y'=Y$. Note that for $\dim W_\infty=1$ the definition of the present paragraph
is equivalent to the definition of the previous paragraph.

Finally, we say that two $Q$-modules are $S$-equivalent, if their
Jordan-H\"older filtrations have the same composition factors.

\subsection{Walls}
\label{walls}
Given $l,l'\in\BZ/n\BZ$, let $[l,l']\subset\BZ/n\BZ$ stand for the interval
between $l$ and $l'$ ($l,l'$ included) in the natural cyclic order. Also,
given $0<l\leq l'<n$, let $[l,l']$
stand for the set $\{l,l+1,\ldots,l'-1,l'\}$. We say that a hyperplane
$H_{l,l'}:=\{\zeta:\ \sum_{k\in[l,l']}\zeta_k=0\}\subset\BQ^{\BZ/n\BZ}$ is an
{\em affine wall}; and also a hyperplane
$H:=\{\zeta:\ \sum_{\BZ/n\BZ}\zeta_l=0\}\subset\BQ^{\BZ/n\BZ}$ is an affine wall.
In case $d_0=0$, the coordinate $\zeta_0$ is irrelevant, and
the space of stability conditions is just $\BQ^{n-1}$. Given $0<l\leq l'<n$,
we say that a hyperplane
$H_{l,l'}:=\{\zeta:\ \sum_{k\in[l,l']}\zeta_k=0\}\subset\BQ^{n-1}$ is a
{\em finite wall}.

\begin{prop}
\label{semist}
(a) If $\zeta\in\BQ^{\BZ/n\BZ}$ does not lie on an affine wall,
$\zeta$-stability is equivalent to $\zeta$-semistability;

(b) If $d_0=0$, and $\zeta\in\BQ^{n-1}$ does not lie on a finite wall,
$\zeta$-stability is equivalent to $\zeta$-semistability.
\end{prop}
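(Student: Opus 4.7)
The plan is to prove the contrapositive: a strictly $\zeta$-semistable $Y$ forces $\zeta$ onto one of the listed walls. After normalizing $\zeta_\infty:=-\langle\zeta,\ul{d}\rangle$, the slope of any $Q$-module $Y$ with enhanced dimension vector $\widetilde{\ul{d}}=(\ul{d},1)$ satisfies $\theta_\zeta(Y)=0$, so strict semistability amounts to the existence of a proper nonzero $Q$-submodule of slope zero. Since the subcategory of $\zeta$-semistable $Q$-modules of fixed slope is abelian, $Y$ admits a Jordan--H\"older filtration by $\zeta$-stable factors of slope zero; exactly one factor carries the $W_\infty$-line, and by strictness at least one other factor $L$ has $d_\infty=0$.

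The core of the argument is to classify the possible dimension vectors of such an $L$. Since $L$ is $\zeta$-stable it is indecomposable, and the relation $A_{l+1}B_l=B_lA_l$ respects generalized $A$-eigenspace decompositions, so a common affine shift of the $A_l$'s reduces to the case where all $A_l$ act nilpotently. I would then analyze the socle and cosocle of $L$ over the underlying loop algebra (the quiver with vertices $\BZ/n\BZ$ or $\{1,\ldots,n-1\}$, loops $A_l$, arrows $B_l$, and the relation above): the socle is a direct sum of one-dimensional simples $L_l(0)$, each embedding into $L$ with slope $\zeta_l$, so $\zeta$-stability forces $\zeta_l<0$ for every $l$ in the socle's support; dually $\zeta_l>0$ on the support of the cosocle. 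Combined with the directed flow of the $B_l$'s, these sign constraints should collapse the support of $L$ to a single connected arc $[l_0,l_1]$, and a minimality argument will pin the dimension vector as the indicator $\chi_{[l_0,l_1]}$. In case (a) there is an additional wrap-around possibility where $L$ has full cyclic support with dimension vector $(1,\ldots,1)$, realized by the simples $L(x,y)$ of~\ref{strat}.

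Once the dimension vector of $L$ is restricted to this list, $\theta_\zeta(L)=0$ becomes $\sum_{k\in[l_0,l_1]}\zeta_k=0$ (resp.\ $\sum_k\zeta_k=0$ in the wrap-around case), placing $\zeta$ on the wall $H_{l_0,l_1}$ (resp.\ $H$), contradicting the hypothesis that $\zeta$ is off the walls. The main obstacle will be the classification step above: ruling out higher-multiplicity $\zeta$-stable representations of the loop algebra with $d_\infty=0$. The socle/cosocle sign constraints localize the support to an arc, but controlling the multiplicity along the arc requires exploiting the chain of sub-arc submodules as candidate destabilizers, each imposing a distinct linear inequality on $\zeta$; combining these inequalities with indecomposability should enforce multiplicity one.
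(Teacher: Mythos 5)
Your overall architecture coincides with the paper's: pass to the Jordan--H\"older filtration inside the abelian category of semistables of slope zero, observe that strict semistability produces a stable factor $L$ with $d_\infty=0$, classify such $L$, and read off the wall from $\theta_\zeta(L)=0$. But the classification of $\zeta$-stable modules with $W_\infty=0$ is the entire content of the proposition, and this is exactly the step you leave as a sketch (``should collapse'', ``will pin'', ``the main obstacle''). As written, the socle/cosocle sign constraints do not close it: knowing that $\zeta_l<0$ on the support of the socle and $\zeta_l>0$ on the support of the cosocle says nothing about multiplicities along the arc, and after your generalized-eigenvalue shift the $A_l$ are only nilpotent, not zero, so the ``loop algebra'' representation theory you would need to control is genuinely more complicated than that of the cyclic quiver. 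There really are indecomposable nilpotent representations of the cyclic quiver whose dimension vectors are not $0$--$1$ arcs (they wrap around), so ``indecomposable'' alone cannot enforce multiplicity one; you must use stability in a sharper way than sign constraints on the $\zeta_l$.

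The missing idea is the brick property combined with two specific endomorphisms. A $\zeta$-stable module admits no nonscalar endomorphisms, and when $W_\infty=0$ (so $p_\bullet=q_\bullet=0$ and the relation degenerates to $A_{l+1}B_l=B_lA_l$) the tuple $(A_l)_{l}$ is itself an endomorphism of $L$; hence $A_l=c\,\on{Id}_{V_l}$ identically, not merely nilpotent after a shift, and one may take $A_\bullet\equiv0$. Then $L$ is a representation of the bare cyclic quiver, and the tuple $(B_{l-1}\cdots B_{l+1}B_l)_l$ is again an endomorphism, hence a scalar $c'$. If $c'\ne0$ indecomposability gives $L\simeq L(0,c')$, whose slope-zero condition is the affine wall $H$; if $c'=0$ one invokes the classification of indecomposable nilpotent representations of the cyclic quiver, and stability (testing against the chain of sub-interval submodules $Y_{[l'',l']}$, as you propose) rules out wrap-around and forces $L\simeq Y_{[l,l']}$, whose slope-zero condition is $\sum_{k\in[l,l']}\zeta_k=0$, i.e.\ the wall $H_{l,l'}$. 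Without the scalar-endomorphism input your argument has a genuine gap; with it, the proof is essentially the paper's.
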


\proof
Given a $\zeta$-semistable $Q$-module $Y$ we consider its Harder-Narasimhan
filtration with $\zeta$-stable factors. If $Y$ is not $\zeta$-stable, there
are at least 2 factors, and at least one of them has $d_\infty=0$. Thus it
suffices to check that when $\zeta$ does not lie on a wall, then there are no
$\zeta$-stable $Q$-modules with $d_\infty=0$, i.e. $W_\infty=0$.

So we suppose $Y$ is a $\zeta$-stable $Q$-module with $W_\infty=0$.
By a standard argument, $Y$ does not have nonscalar endomorphisms. However,
the collection $(A_\bullet)$ is an endomorphism of $Y$. In effect, since
$W_\infty=0$, we have $p_\bullet=0=q_\bullet$, and hence the relation in $Q$
reads $A_{l+1}B_l-B_lA_l=0$, i.e. the operators $B_\bullet$ intertwine the
endomorphisms $A_\bullet$. We conclude that $A_l=c\on{Id}_{V_l}$ for some
constant $c$. Subtracting $c\on{Id}_{V_l}$ we may and will assume $A_l\equiv0$,
and thus we deal just with a representation of the cyclic quiver.
Moreover, the collection $(B_{l-1}B_{l-2}\ldots B_{l+1}B_l)$ is an endomorphism
of $Y$, and hence $B_{l-1}B_{l-2}\ldots B_{l+1}B_l\equiv c\on{Id}_{V_l}$.
In case $c\ne0$, we get $Y\simeq L(0,c)^{\oplus d}$ in notations of~\ref{strat}.
Being indecomposable, $Y$ is isomorphic to $L(0,c)$.
In case $c=0$, according to the well known classification of nilpotent
representations of a cyclic quiver, an indecomposable $Y$ must be of the form
$Y_{[l,l']}$. Here $Y_{[l,l']}$ has $V_k=\BC$ for $k\in[l,l']$, and $V_k=0$
otherwise; furthermore, $B_k$ is an isomorphism for $l'\ne k\in[l,l']$, and
$B_{l'}=0$. Finally, if $d_0=0$, only $Y_{[l,l']}$ with $0<l\leq l'<n$ occur.

It remains to classify the stability conditions $\zeta$ for which
$L(0,c)$ or $Y_{[l,l']}$ are stable. For irreducible $L(0,c)$ any
$\zeta$ on the wall
$H:=\{\zeta:\ \sum_{\BZ/n\BZ}\zeta_l=0\}\subset\BQ^{\BZ/n\BZ}$ works, and no
other $\zeta$ works.
For $Y_{[l,l']}$ any submodule is of the form $Y_{[l'',l']}$ for
$l\leq l''\leq l'$ in the cyclic order. It follows that $Y_{[l,l']}$ is
$\zeta$-stable iff $\zeta_l+\zeta_{l+1}+\ldots+\zeta_{l'-1}+\zeta_{l'}=0$,
and $\zeta_l+\zeta_{l+1}+\ldots+\zeta_{l''}\geq0$ for any  $l\leq l''\leq l'$
in the cyclic order. This completes the proof of the proposition. \qed

\subsection{Smoothness}
\label{smo}
For a $\zeta$-stable $Q$-module $Y$ the stabilizer of $Y$ in $G_{\ul{d}}$ is
trivial by the standard argument we have used already: the stable
modules do not admit nonscalar endomorphisms.
Recall that we have a morphism $\mu:\ M^\Gamma_{\ul{d}}\to\bigoplus_{l\in\BZ/n\BZ}
\Hom(V_l,V_{l+1}),\ (A_\bullet,B_\bullet,p_\bullet,q_\bullet)\mapsto
(A_{l+1}B_l-B_lA_l+p_{l+1}q_l)_{l\in\BZ/n\BZ}$, and $\sM_{\ul{d}}=\mu^{-1}(0)$.
In the theory of Nakajima quiver varieties, the moduli space of $\zeta$-stable
quiver representations is smooth because the differential of the moment map
is surjective. In our situation this is no longer true as the following example
shows.

We identify the tangent space to the vector space $M^\Gamma_{\ul{d}}$ at
$Y=(A_\bullet,B_\bullet,p_\bullet,q_\bullet)$ with
$M^\Gamma_{\ul{d}}$, we also identify the tangent space to the vector space
$\bigoplus_{l\in\BZ/n\BZ}\Hom(V_l,V_{l+1})$ at $\mu(Y)$ with this vector space,
and write down the formula for the differential $d\mu(Y)$ as follows:
$d\mu(A'_\bullet,B'_\bullet,p'_\bullet,q'_\bullet)=
(A'_{l+1}B_l+A_{l+1}B'_l-B_lA'_l-B'_lA_l+p'_{l+1}q_l+p_{l+1}q'_l)_{l\in\BZ/n\BZ}$.
The differential $d\mu(Y)$ is not surjective iff there exists nonzero
$(C_l\in\Hom(V_l,V_{l-1}))_{l\in\BZ/n\BZ}$ orthogonal to the image of
$d\mu$ with respect to the pairing given by the trace of the product.
Equivalently, $C_lA_l-A_{l-1}C_l=0,\ B_{l-1}C_l=C_{l+1}B_l=0,\ q_{l-1}C_l=0,\
C_lp_l=0$ for any $l$.

Now let us recall the setup of Example~\ref{sl3} and take the stability
condition $\zeta=(\zeta_1,\zeta_2)=(-1,2)$ lying off the walls.
We take $A_1=A_2=p_1=q_2=1,\ B_1=p_2=q_1=0$. It is immediate to check that
$Y$ is $\zeta$-stable but on the other hand $C_2=1$ satisfies the above
conditions. Hence the moduli space $\CP_{1,1}^{(-1,2)}$ of $\zeta$-stable
(equivalently, $\zeta$-semistable) $Q$-modules is {\em nonsmooth}.
In fact, it is easy to check that $\CP_{1,1}^{(-1,2)}\iso\fZ_{1,1}$.


\subsection{Localization in characteristic $p$}
\label{loc}
From now on we assume that the base field is $\sK:=\overline{\mathbb F}_p$,
an algebraic closure of a finite field of characteristic $p\gg0$.
We will use the notations and results of section~3 of~\cite{bfgi}.
For any algebraic variety $X$ over $\sK$ we denote by $X^{(1)}$ its Frobenius
twist, and we denote by $\on{Fr}:\ X\to X^{(1)}$ the Frobenius morphism.
For a connected linear algebraic group $A$ over $\sK$ we have an exact
sequence of groups $1\to A_1\to A\stackrel{\on{Fr}}{\longrightarrow}A^{(1)}\to1$
where $A_1$ stands for the Frobenius kernel. The Lie algebra $\fa$ of $A$
is equipped with a natural structure of $p$-Lie algebra, and its universal
enveloping algebra $U(\fa)$ contains the $p$-center $\fZ(\fa)$.
We denote by ${\mathbb X}^*(\fa)$ the lattice of characters of $\fa$ of the
type $\chi=\on{dlog}f$ where $f:\ A\to{\mathbb G}_m$ is an algebraic character
of $A$. For such a character $\chi$ we denote by $I_\chi$ the kernel of the
corresponding homomorphism $U(\fa)\to\sK$. We set
$I_\chi^{(1)}:=I_\chi\cap\fZ(\fa)$, a maximal ideal of $\fZ(\fa)$.
Note that $I_\chi^{(1)}=I_0^{(1)}$ (see~\cite{bfgi}~(3.2.4) and~3.3).
We denote by $\fu(\fa)$ the quotient of $U(\fa)$ by the two-sided ideal
generated by $I^{(1)}_\chi=I^{(1)}_0$. The image of $I_\chi$ in $\fu(\fa)$
is denoted by $\fri_\chi\subset\fu(\fa)$.

Now we take $A=G_{\ul{d}}=\prod_{l\in\BZ/n\BZ}GL(V_l)$, and we denote its
Lie algebra by $\fg_{\ul{d}}$. We have (notations of~(\ref{2sided-ideal})
and~(\ref{poisson-ideal}))
$R\cap\fZ(\fa)\simeq\sK[S_{\ul{d}}^{(1)}]$. Thus we may localize the
$(R\cap\fZ(\fa))$-module $R$ to $S_{\ul{d}}^{(1)}$ and view it as a sheaf
of algebras $\CR_{\ul{d}}^{(1)}$.
We have the moment map $\mu^{(1)}:\ S_{\ul{d}}^{(1)} \to[\fg_{\ul{d}}^*]^{(1)}$.
The quotient $\CR_{\ul{d},\chi}^{(1)}:=
\CR_{\ul{d}}^{(1)}/(\CR_{\ul{d}}^{(1)}\cdot I^{(1)}_\chi)$ is just
the restriction of $\CR_{\ul{d}}^{(1)}$ to the scheme-theoretic zero-fiber
of the moment map (and is independent of $\chi\in{\mathbb X}^*(\fg_{\ul{d}})$).
This zero-fiber is nothing else than $\sM_{\ul{d}}^{(1)}$.
We consider $\CE_\chi:=(\CR_{\ul{d},\chi}^{(1)}/\CR_{\ul{d},\chi}^{(1)}\cdot
\fri_\chi)^{G_{\ul{d},1}}$: a $G_{\ul{d}}^{(1)}$-equivariant sheaf on
$\sM_{\ul{d}}^{(1)}$.

We restrict $\CE_\chi$ to the open subset $\sM_{\ul{d}}^{(1),s}\subset
\sM_{\ul{d}}^{(1)}$ of stable points. The action of $G_{\ul{d}}^{(1)}$ on
$\sM_{\ul{d}}^{(1),s}$ is free; the projection $\on{pr}:\ \sM_{\ul{d}}^{(1),s}\to
\CP_{\ul{d}}^{(1)}$ is a $G_{\ul{d}}^{(1)}$-torsor. We set
\begin{equation}
\label{locp}
\CA_\chi:=
\on{pr}_*(\CE_\chi|_{\sM_{\ul{d}}^{(1),s}})^{G_{\ul{d}}^{(1)}},\ \on{and\ put}\
\sA_\chi:=\Gamma(\CP_{\ul{d}}^{(1)},\CA_\chi).
\end{equation}
Given another character $\psi\in{\mathbb X}^*(\fg_{\ul{d}})$ we
consider the $G_{\ul{d}}^{(1)}$-equivariant sheaf
$_\chi\CE_\psi:={\underline\Hom}{}_{\CR_{\ul{d},\chi}^{(1)}}(\CR_{\ul{d},\chi}^{(1)}/
\CR_{\ul{d},\chi}^{(1)}\cdot\fri_\chi,\CR_{\ul{d},\chi}^{(1)}/
\CR_{\ul{d},\chi}^{(1)}\cdot\fri_\psi)$ on $\sM_{\ul{d}}^{(1)}$. We set
\begin{equation}
\label{locp bimod}
_\chi\CA_\psi:=\on{pr}_*(\ _\chi\CE_\psi|_{\sM_{\ul{d}}^{(1),s}})^{(\psi-\chi)}
\end{equation}
where the superscipt $(\psi-\chi)$ stands for the $(\psi-\chi)$-weight
component. This is an $\CA_\chi-\CA_\psi$-bimodule.

\begin{conj}
\label{localiz}
(a) The canonical algebra morphism $\Xi_\chi:\ \CY_{\ul{d}}\to\sA_\chi
=\Gamma(\CP_{\ul{d}}^{(1)},\CA_\chi)$ is an algebra isomorphism.

(b) The algebra $\CY_{\ul{d}}$ has finite homological dimension.

(c) The functor of global sections $R\Gamma(\CP_{\ul{d}}^{(1)},?)$
from the bounded derived category $D^b(\CA_\chi-\on{Mod})$ of $\CA_\chi$-modules
to the bounded derived category $D^b(\sA_\chi-\on{Mod})$ of
$\sA_\chi$-modules is an equivalence of categories for $\chi=0$.

(d) The bimodules $_\chi\CE_\psi$ give rise to the Morita equivalences
$_\chi E_\psi:\ \CA_\chi-\on{Mod}\iso\CA_\psi-\on{Mod}$.
\end{conj}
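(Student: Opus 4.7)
The plan is to follow the template of Bezrukavnikov--Mirkovi\'c localization, as adapted to symplectic quiver varieties and, more generally, to quantizations of symplectic resolutions obtained by Hamiltonian reduction; the framework summarized in section~3 of~\cite{bfgi} is the most directly relevant. The guiding picture is the resolution $\pi^{(1)}\colon \CP^{(1)}_{\ul{d}} \to \fZ^{(1)}_{\ul{d}}$, and we think of $\CA_\chi$ as a quantization of $\pi^{(1)}_* \CO_{\CP^{(1)}_{\ul{d}}}$ together with its derived upgrade. The parts should be attacked in the order (a), (d), (c), (b), since (b) will fall out as a corollary once the others are in place.

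First I would address (a) by a filtered/graded comparison. Both $\CY_{\ul{d}}$ and $\sA_\chi$ carry natural filtrations: the PBW filtration on the quantum reduction, and the order filtration coming from the Rees presentation of $\CR^{(1)}_{\ul{d},\chi}$. The associated graded of $\CY_{\ul{d}}$ is $\sK[\fZ_{\ul{d}}]$ by Proposition~\ref{PBW}. For $\gr \sA_\chi$ I would argue that it agrees with $\Gamma(\CP^{(1)}_{\ul{d}}, \pi^{(1)}_* \CO)$; combined with the vanishing $R^i \pi_* \CO = 0$ for $i>0$ (expected since $\pi$ is the semismall resolution of a normal variety with rational singularities, Theorem~\ref{bij}(a)) this identifies $\gr \sA_\chi$ with $\sK[\fZ^{(1)}_{\ul{d}}]$. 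A term-by-term comparison, using the explicit generators $a_{l,r}, b_{l,s}$ highlighted in Corollary~\ref{quantum-generators-affine} and its finite analogue, should then show that the canonical map $\Xi_\chi$ is filtered and surjective on associated gradeds, hence an isomorphism.

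Next I would tackle (d) and (c) together. For (d), I would verify that $_\chi\CE_\psi$ is locally free of rank one over both $\CA_\chi$ and $\CA_\psi$ on $\sM^{(1),s}_{\ul{d}}$; this reduces to a character computation for the Frobenius kernel $G_{\ul{d},1}$ on the relevant isotypic components and is a direct generalization of the corresponding statement for Nakajima quiver varieties. For (c), the standard approach is to exhibit enough ``affine'' objects: first show $R^{>0}\Gamma(\CP^{(1)}_{\ul{d}}, \CA_\chi \otimes_{\CO} \CL) = 0$ for $\CL$ a sufficiently ample line bundle (reducing to a Kodaira--Kempf type vanishing for the semismall $\pi^{(1)}$ in characteristic $p \gg 0$), and then propagate this to $\chi=0$ using the wall-crossing bimodules $_\chi\CA_\psi$ supplied by (d). Finally, (b) follows: since $\CA_\chi$ locally is a quantization of the structure sheaf on the smooth variety $\CP^{(1)}_{\ul{d}}$ it has finite homological dimension sheaf-theoretically, and (c) transports this to $\sA_\chi \cong \CY_{\ul{d}}$.

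The main obstacle, making this genuinely harder than either the Nakajima or standard Bezrukavnikov--Mirkovi\'c setting, is that $\CA_\chi$ is \emph{not} a sheaf of Azumaya algebras on $\CP^{(1)}_{\ul{d}}$, as already emphasized in the introduction. Consequently the Morita equivalences in (d) cannot be obtained by the usual trick of twisting an Azumaya algebra by a line bundle; one must carefully control $_\chi\CE_\psi$ over the exceptional locus of $\pi^{(1)}$, where the quantization degenerates (e.g.\ in the extreme case $n=2,\ \ul{d}=(0,1)$ the algebra $\CA_\chi$ is the universal enveloping of a Borel in $\fsl_2$, with a very non-trivial representation theory in the fibres). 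I would approach this by first proving (d) for $\psi-\chi$ lying deep in the ample cone, using vanishing theorems for twisted structure sheaves on $\CP^{(1)}_{\ul{d}}$, and then bootstrap to arbitrary $\chi,\psi$ by composition of Morita kernels in the spirit of wall-crossing functors for symplectic resolutions.
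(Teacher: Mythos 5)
The statement you are proving is Conjecture~\ref{localiz}: the paper offers no proof of it (the surrounding section is explicitly titled ``Speculations\dots'' and the introduction calls these the ``standard conjectures'' about $\CA_\chi$), so there is no argument of the authors to compare yours against. What you have written is a plausible programme, closely modelled on Bezrukavnikov--Mirkovi\'c and on \cite{bfgi}, but each of its main steps rests on an assertion that is itself unproven and essentially as hard as the conjecture. For (a), the identification $\gr\sA_\chi\cong\sK[\fZ^{(1)}_{\ul{d}}]$ needs $\pi_*\CO_{\CP_{\ul{d}}}=\CO_{\fZ_{\ul{d}}}$ together with $R^{>0}\pi_*\CO_{\CP_{\ul{d}}}=0$ \emph{in characteristic $p$}; the paper only proves normality of $\fZ_{\ul{d}}$ over $\BC$ (Theorem~\ref{bij}(a)), not rational singularities, and ``semismall resolution of a normal variety'' does not imply vanishing of higher direct images. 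You also silently assume that passing to $G_{\ul{d},1}$- and $G^{(1)}_{\ul{d}}$-invariants commutes with taking associated graded; $G_{\ul{d},1}$ is an infinitesimal, non-linearly-reductive group scheme, so its invariants functor is not exact and this commutation is a genuine issue, not bookkeeping.

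The more serious gap is in (c) and (d). You correctly identify that $\CA_\chi$ is not Azumaya as the central difficulty, but your proposed remedy --- show $_\chi\CE_\psi$ is ``locally free of rank one over both algebras'' by a character computation for $G_{\ul{d},1}$, then bootstrap --- is precisely the step that has no known mechanism outside the Azumaya setting. In the test case $n=2$, $\ul{d}=(0,1)$ the paper notes $\CA_\chi$ is $U(\fb)$ for $\fb\subset\fsl_2$; for a solvable Lie algebra in characteristic $p$ the analogous ``translation'' bimodules between reductions at different characters are not invertible in general, so (d) cannot follow from a rank count and must use some special geometric feature of the chainsaw setting that you have not supplied. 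Likewise your route to (c) invokes a Kodaira/Kempf-type vanishing for ample twists on $\CP^{(1)}_{\ul{d}}$ --- unavailable in characteristic $p$ for a general smooth quasiprojective variety --- and then passes from ample $\chi$ to $\chi=0$ via the wall-crossing bimodules of (d), which makes (c) and (d) circularly dependent. In short: the architecture is the right one (and is presumably what the authors had in mind), but as written this is a research plan with several open sub-conjectures, not a proof.
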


Let us say that $\chi\in{\mathbb X}^*(\fg_{\ul{d}})$ is {\em regular} if
the functor $R\Gamma(\CP_{\ul{d}}^{(1)},?):\ D^b(\CA_\chi-\on{Mod})\to
D^b(\sA_\chi-\on{Mod})$ is an equivalence of categories. Thus for regular
$\chi$ we get an equivalence of categories $D^b(\CA_\chi-\on{Mod})\iso
D^b(\CY_{\ul{d}}-\on{Mod})$. Composing it with the Morita equivalences
$_\chi E_\psi$ for other regular characters $\psi$, we obtain the
self-equivalences $_\chi\varepsilon_\psi:\ D^b(\CY_{\ul{d}}-\on{Mod})\iso
D^b(\CY_{\ul{d}}-\on{Mod})$. We conjecture that they generate an action
of the pure (affine) braid group on $D^b(\CY_{\ul{d}}-\on{Mod})$.

\subsection{Splitting module}
\label{splitt}
Let us denote by $\CY_{\ul{d}}^0$ the ``Cartan'' subalgebra of $\CY_{\ul{d}}$
generated by $\{a_{l,r},\ l\in\BZ/n\BZ,\ r\geq0\}$, and let us denote
by $\CY_{\ul{d}}^+$ the ``nilpotent'' subalgebra of $\CY_{\ul{d}}$ generated by
$\{b_{l,r},\ l\in\BZ/n\BZ,\ r\geq0\}$.
We define the {\em $p$-center} $\fZ(\CY_{\ul{d}})$ as the Hamiltonian reduction of the $p$-center of $U(\fa_{\ul{d}})$ inside
$\CY_{\ul{d}}$. We have $\fZ(\CY_{\ul{d}})\simeq\sK[\fZ_{\ul{d}}^{(1)}]$ (it is just the coordinate ring of the Frobenius twist of the classical Hamiltonian reduction $\fZ_{\ul{d}}$).
There are also the ``Cartan'' subalgebra $\fZ(\CY_{\ul{d}})^0:=\fZ(\CY_{\ul{d}})\cap\CY_{\ul{d}}^0$, and
the ``nilpotent'' subalgebra $\fZ(\CY_{\ul{d}})^+:=\fZ(\CY_{\ul{d}})\cap\CY_{\ul{d}}^+$ inside this $p$-center.
Clearly, $\fZ(\CY_{\ul{d}})^0\simeq\sK[\BA^{\ul{d},(1)}]$ (regular functions
on the Frobenius twist of $\BA^{\ul{d}}$). We denote by
${\widehat\fZ}(\CY_{\ul{d}})^0$ the completion of
$\fZ(\CY_{\ul{d}})^0\simeq\sK[\BA^{\ul{d},(1)}]$ at the maximal ideal of the
point $0\in\BA^{\ul{d},(1)}$. We set


\begin{equation}
\label{splitM}
{\widehat M}:={\widehat\fZ}(\CY_{\ul{d}})^0\otimes_{\fZ(\CY_{\ul{d}})^0}\fZ(\CY_{\ul{d}})\otimes_{\fZ(\CY_{\ul{d}})^+}\CY_{\ul{d}}^+
\end{equation}
and we conjecture that the regular action of the algebra
${\widehat\fZ}(\CY_{\ul{d}})^0\otimes\CY_{\ul{d}}^+$ on $M$ extends to the
action of ${\widehat\CY}_{\ul{d}}$: the completion of $\CY_{\ul{d}}$ at the
maximal ideal of $\fZ(\CY_{\ul{d}})^0$. Moreover, we conjecture that the
action of $\bT^{(1)}=T^{(1)}\times{\mathbb G}_m^{(1)}\times{\mathbb G}_m^{(1)}$
on $\fZ(\CY_{\ul{d}})\simeq\sK[\fZ_{\ul{d}}^{(1)}]$ extends to an action of
$\bT^{(1)}$ (i.e. a grading) on $\widehat M$, and on ${\widehat\CY}_{\ul{d}}$.

\subsection{Coherent sheaves}
\label{cohs}
Let $\chi\in{\mathbb X}^*(\fg_{\ul{d}})$ be a regular character. We conjecture
that the equivalence $R\Gamma(\CP_{\ul{d}}^{(1)},?):\ D^b(\CA_\chi-\on{Mod})\iso
D^b(\CY_{\ul{d}}-\on{Mod})$ of Conjecture~\ref{localiz} extends to
$D^b_{\bT^{(1)}}({\widehat\CA}_\chi-\on{Mod})\iso
D^b_{\bT^{(1)}}({\widehat\CY}_{\ul{d}}-\on{Mod})$ where
${\widehat\CA}_\chi-\on{Mod}$ stands for the category of $\CA_\chi$-modules
supported set-theoretically over $0\in\BA^{\ul{d},(1)}$. Thus we obtain
the self-equivalences $_\chi\varepsilon_\psi:\
D^b_{\bT^{(1)}}({\widehat\CY}_{\ul{d}}-\on{Mod})\iso
D^b_{\bT^{(1)}}({\widehat\CY}_{\ul{d}}-\on{Mod})$. Let us denote by
${\widehat\CM}_\chi$ an ${\widehat\CA}_\chi$-module such that
$R\Gamma(\CP_{\ul{d}}^{(1)},{\widehat\CM}_\chi)=\widehat M$ (the localization
of $\widehat M$). We have a tensor product functor
\begin{equation}
\label{tensorM}
\tau:\ \on{Coh}_{\bT^{(1)}}({\widehat\CP}{}_{\ul{d}}^{(1)})\to
\on{Coh}_{\bT^{(1)}}({\widehat\CA}_\chi),\ \CF\mapsto{\widehat\CM}_\chi
\otimes_{\CO_{\CP_{\ul{d}}^{(1)}}}\CF
\end{equation}
where $\on{Coh}_{\bT^{(1)}}({\widehat\CP}{}_{\ul{d}}^{(1)})$ stands for the
category of coherent sheaves on ${\widehat\CP}{}_{\ul{d}}^{(1)}$ supported
set-theoretically over $0\in\BA^{\ul{d},(1)}$. We conjecture that $\tau$
is a full embedding onto the minimal Serre subcategory containing
${\widehat\CM}_\chi$. Moreover, the composition
$R\Gamma(\CP_{\ul{d}}^{(1)},?)\circ\tau$ is a full embedding
$\Upsilon:\ D^b\on{Coh}_{\bT^{(1)}}({\widehat\CP}{}_{\ul{d}}^{(1)})\to
D^b_{\bT^{(1)}}({\widehat\CY}_{\ul{d}}-\on{Mod})$. Finally, we expect that
the essential image of $\Upsilon$ is independent of the regular character
$\chi$ and is invariant under the equivalences $_\chi\varepsilon_\psi$.
All in all we obtain the desired action of the pure braid group on
$D^b\on{Coh}_{\bT^{(1)}}({\widehat\CP}{}_{\ul{d}}^{(1)})$ generated by the
equivalences $_\chi\varepsilon_\psi$.

\end{document}